    \newtheorem{teo}{Theorem}[section]
    \newtheorem{lem}[teo]{Lemma}
    \newtheorem{prop}[teo]{Proposition}
    \newtheorem{defn}[teo]{Definition}
    \newtheorem{obs}[teo]{Remark}
\begin{document}


\title{An explicit formula for the nonstationary diffracted wave  scattered on a NN-wedge}
\author{Anel Esquivel Navarrete, Anatoli Merzon}

\date{\today}


\maketitle

{\abstract We  consider two dimensional nonstationary scattering of plane waves by a NN-wedge. We prove the existence and uniqueness of a solution to the corresponding mixed problem and we give an explicit formula for the solution. Also the Limiting Amplitude Principle is proved and a rate of convergence to the limiting amplitude is obtained.}


\section{Introduction}
In this paper we justify an exact formula for the cylindrical wave diffracted by the edge of a wedge when a nonstationary plane wave (see (\ref{uin})) impinges on an arbitrary two dimensional wedge. This cylindrical wave (more exactly its amplitude) arised for the first time in the paper by Sommerfeld \cite{som} where stationary diffraction of a plane wave by a half plane was studied. While many papers are devoted to stationary scattering by a wedge  (see \cite{BLG}-\cite{ob}), the nonstationary diffraction is studied considerably less. The diffraction of  nonstationary incident Heaviside type waves was considered in
\cite{sov}-
\cite{rot}. A systematic mathematical analysis of the scattering of nonstationary plane harmonic waves in the cases of DD and DN problems was undertaken in \cite{kmm}-\cite{mm}. In this paper we analyze the NN-scattering, we prove the Limiting Amplitude Principle and we find the rate of convergence to the Limiting Amplitude. Also we compare our results with the corresponding results of \cite{hew} where the case of a wedge with the magnitude of $2\pi$ (half plane) is considered.

Let
\begin{eqnarray}\label{uin}
u_{in}(y,t) := e^{i(\mathbf{k_0}\cdot y - \omega_0 t)}f(t-\mathbf{n_0}\cdot y),\qquad y\in\mathbb{R}^2,\ t\in\mathbb{R},
\end{eqnarray}
where
$\mathbf{n_0}=(\cos\alpha,\sin\alpha)\in\mathbb{R}^2$, 
$\mathbf{k_0}=\omega_0 \mathbf{n_0}\in\mathbb{R}^2$, 
$\omega_0>0$. Here $f$ is a profile function of the wave that is assumed belonging to $C^{\infty}(\mathbb{R})$ and for some $0<s_0<1$ it has the form
\begin{equation}\label{f}
f(s) = \left \{ \begin{array}{ll}
								 0, & \text{if }s\leq 0,\\
								 1, & \text{if }s\geq s_0.
								\end{array} \right.
\end{equation}
We consider two-dimensional plane wave scattering scattering by the wedge
\begin{equation*}\label{W}
W:=\left\{y=(y_1,y_2)\in\mathbb{R}^2 : y_1=\rho\cos\theta, y_2=\rho\sin\theta,\rho\geq 0, 0\leq\theta\leq\phi\right\}
\end{equation*}
of the magnitude $\phi\in(0,\pi)$ (see Figure 1). In Section \ref{secLimAm} we also consider the Heaviside function ($s_0=0$) by taking the limit $s_0\rightarrow 0$.

We suppo\-se that the wedge is of the NN-type, that means that the corresponding boun\-dary conditions are of the Neumann type  on 
 both
 sides. The exact solution of the scattering problems for the wedges of DD-type (both  boundary conditions are of the Dirichlet type)  and DN-type (one boundary condition is of the Dirichlet type and the other is of the Neumann type) have been obtained in \cite{kmm}-\cite{mm}; also the  Limiting Amplitude Principle for  these  problems was  proved in both cases. The exact  solutions  are

\setlength{\unitlength}{0.9mm}
\hspace{1cm} \begin{picture} (110,90)
 \put(100,35) {\vector (1,0){39}} \put(142,34){$y_1$}

  \put (55,35){\line(1,0){50}}

 \put
 (55,35){\vector(0,1){50}}  \put(57,85){$y_2$}

  \put(55,35){\line(2,1){65}}

  \put (125,45){W}

  \put (20,55){Q}

  \put(4,47){\line(1,-4){10}}
  \put(-3,45){\line(1,-4){10}}
  \put(11,49){\line(1,-4){10}}
  \put(19,51){\line(1,-4){10}}
  \put(26,53){\line(1,-4){10}}
  \put(34,55){\line(1,-4){10}}
  \put(42,57){\line(1,-4){10}}

  \put(55,31){0}

  \put (3,22){\line(4,1){60}}

  \put(55,35) {\vector (4,1){25}}

 \qbezier(65,35)(65.5,36)(64.5,37)


 \put(67,35.5){$\alpha$}

 \qbezier(95,35)(98,39)(96,44) \qbezier(96,44)(95,46)(94,48)
 \qbezier(94,48)(95,46)(94,48) \qbezier(94,48)(93,49)(92,50)
 \qbezier(92,50)(91,51)(90,52)
 \qbezier(90,52)(89.5,52.4)(89.5,52.4)
  \put(81,41){$\mathbf{n_0}$}

 \put (97,44){$\phi$}

 \put (127,29){$Q_1$}


 \put (115,70){$Q_2$}




  \qbezier(3,22)(6, 26)(10,24)
 \qbezier(10,24)(13.5,21.5)(17,26)
 \qbezier(17,26)(20.5, 29.5)(24.,28)
 \qbezier(24.,28)(28.5, 25) (31.5, 29.5)
 \qbezier(31.5,29.5)(34.5,33.5) (39,32)
 \qbezier(39,32) (43.5,30,5)(47.5,33.5)
 \qbezier(47.5,33.5) (50.5,35)(54.5,35)





\multiput(55,35)(5,4){12}{\line(5,4){4}}
\put(116,83){$\theta=\theta_1$}

\multiput(55,35)(5,-1.25){13}{\line(4,-1){4}}
\put(120,17){$\theta=\theta_2$}

 \end{picture}

 \centerline{Figure 1. The impinging plane wave}
\vspace{0.4cm}

\noindent decomposed into the sum of the impinging wave $u_{in}$, a reflected wave $u_r$ and a wave diffracted by the edge of the wedge $u_d$ (see (77) in \cite{la} and Lemma 15.1 in \cite{mm}), (see Figure 2).



In Figure 2 we present schematically the diffracted, reflected and incident waves. The straight line to the right, which intersects the angle, is the front of the incident wave at time $t>0$. The segments of the straight lines connecting the sides of the angle with the directions $\theta=\theta_{1,2}$ are fronts of the  waves reflected by the sides according to the geometrical optic. The circles are fronts of the wave diffracted by the vertex. This latter wave has discontinuities on the lines $\theta=\theta_{1,2}$ which are compensated precisely by the reflected waves so that the total field is continuous.

The reflected wave $u_r$ is the result of the optical reflections and it is expressed explicitly from the boundary conditions (see formulas (9) and (1.9) in \cite{la} and \cite{mm}, res\-pecti\-vely). Thus, the greatest interest and difficulty presents the wave $u_d$  difracted by the edge of the wedge. The exact formulas for $u_d$ for the  DD and DN scattering  problems were obtained in  \cite{la} and \cite{mm}. The derivation was based on the representation of ${u}_d(y,t)$ as the inverse Fourier transform $F_{\omega\rightarrow t}^{-1}$ of the ``stationary" wave $\widehat{u}_d(y,\omega)$, $\omega\in\mathbb{C}^+$. This ``stationary" wave is a ``diffracted" part of the solution to the corresponding sta\-tiona\-ry pro\-blem with a parameter $\omega\in\mathbb{C}^+$ which appears after the Fourier-Laplace transform of the non-stationary problem, that is
\begin{eqnarray}\label{ud}
  u_d(y,t) &=& F_{\omega\rightarrow t}^{-1}\big[\widehat{u}_d(y,\omega)\big], \qquad y\in\mathbb{R}^2\setminus W, \ t\in\mathbb{R}.
\end{eqnarray}
In \cite{la}, formulas (76), (41) and in \cite{mm} formulas (12.13) and (12.5) a representation for (\ref{ud}) has been obtained
\begin{eqnarray}\label{udc}
\widehat{u}_{d}(\rho,\theta,\omega)
&=& \dfrac{i}{4\Phi}\ \widehat{g}(\omega) \int\limits_{\mathcal{C}_0}
    e^{-\rho\omega\sinh\beta} H(\beta+i\theta) \ \mathrm{d}\beta, \qquad \omega\in\mathbb{C}^+,
\end{eqnarray}
(we use the Fourier transform in the form
\begin{eqnarray}\label{TF}
\left.
F_{t\rightarrow \omega}[u(t)] = \widehat{u}(\omega) :=
  \int\limits_{-\infty}^{+\infty}e^{i \omega t} u(t)\ \mathrm{d}t\ \right).
\end{eqnarray}
Here
  $\widehat{g}(\omega) := \widehat{f}(\omega-\omega_0)$, $\omega\in\overline{\mathbb{C}^+}$,
is the Fourier-Laplace transform or the Complex Fourier transform of the function $e^{-i\omega_0 t} f(t)$ (see Lemma \ref{propg}, i)) and
\begin{equation}\label{H}
H(\beta,\Phi)
:=    \coth\left[q \left(\beta + i\ \dfrac{\pi}{2} - i\alpha\right)   \right]
    - \coth\left[q \left(\beta- i\ \displaystyle\frac{3\pi}{2} + i\alpha \right) \right],
\end{equation}
for the DD-wedge and
\begin{eqnarray}\label{H1V}
H(\beta,\Phi) &:=&
               \frac{1}{\sinh\left[q \left(\beta + i\ \frac{ \pi}{2} - i\alpha \right) \right]}
          \ +\ \frac{1}{\sinh\left[q \left(\beta - i\ \frac{3\pi}{2} + i\alpha \right) \right]}, \quad \beta\in\mathbb{C},
\end{eqnarray}
for the DN-wedge;  
$q := \dfrac{\pi}{2\Phi}$, 
$\Phi = 2\pi-\phi$, 
$\pi <\Phi\leq 2\pi$
and $\mathcal{C}_0$ is the counter-clockwise directed  contour
\begin{eqnarray}\label{C0}
  \mathcal{C}_0 &:=& \gamma_1 \cup \gamma_2
\end{eqnarray}
with
     $\gamma_1 := \mathbb{R} - i\frac{\pi}{2}$, 
and  $\gamma_2 := \mathbb{R} - i\frac{5\pi}{2}$,
 (see Figure 3  corresponding to the case $\mathrm{Re\ }\omega >0$). Also the regions where the  function $e^{-\rho\omega\sinh\beta}$  decays  are hatched in Figure 3. Note that in (\ref{udc}) $\theta\neq\theta_1,\theta_2$ where
\begin{equation}\label{T1T2}
\theta_1:= 2\phi-\alpha,\qquad \theta_2:=2\pi-\alpha,
\end{equation}
since the function $H$
has poles in $\beta=0$ for these critical values and the integral in (\ref{udc}) does not converge. On the critical rays
 $l_k:=\{(\rho\cos\theta_k,\rho\sin\theta_k)\in\mathbb{R}^2:\rho>0\}$, $k=1,2$, functions $u_d$ and $u_r$ are discontinuous but their sum,  $u$ is continuous. In \cite{la}, \cite{mm}, the calculation of the inverse Fourier-Laplace transform (\ref{ud}) has been done and the following representation for the diffracted waves in the cases of DD and DN problems were obtained in \cite{la}, (90) and in \cite{mm}, (15.1):
\begin{eqnarray}\label{ud2}
u_{d}(\rho,\theta,t)
&=& \dfrac{ie^{-i\omega_0 t}}{4\Phi}  \int\limits_{\mathbb{R}}
    e^{i\rho\omega_0\cosh\beta} Z(\beta,\theta) f(t-\rho\cosh\beta) \ \mathrm{d}\beta.
\end{eqnarray}
Here the function $Z$ is a combination of $H$, (see formulas (88)  and (15.10) in \cite{la} and \cite{mm}, respectively).


{\scriptsize
\setlength{\unitlength}{1mm}
\begin {picture} (200,130)


\put(80,95) {\circle*{0.7}}
\put(80,80) {\circle*{0.7}}
\put(80,87.5) {\circle*{0.7}}
\put(80,95) {\circle*{0.7}}
\put(80,72.5) {\circle*{0.7}}
\put(80,27.5) {\circle*{0.7}}
\put(80,20) {\circle*{0.7}}

 \put(80,12.5) {\circle*{0.7}}
 \put(80,110) {\circle*{1}}

\put(80.5,95){$0$}
\put(81,110){$\frac{\pi}{2}$}
\put(81,87.5){$-\frac{\pi}{4}$}
\put(81,72.5){$-\frac{3\pi}{4}$}
 \put(81,27.5){$-\frac{9\pi}{4}$}
\put(81,12.5){$-\frac{11\pi}{4}$}
 \put(81,65){$-\pi$}
\put(81,50){$-\frac{3\pi}{2}$}\put(80,5) {\circle*{0.7}}
\put(81,5){$-3\pi$} \put(80,-10){\circle*{0.7}}
\put(81,-10){$-\frac{7\pi}{2}$}

 \put(81,18.5){$-\frac{5\pi}{2}$}
 \put(81,79){$-\frac{\pi}{2}$}


\qbezier(82,98)(85,102)(90,104)
\qbezier(90,104)(100,105.5)(120,106.5)
\qbezier(120,106.5)(140,107)(150,107.3)
\qbezier(80,95)(75,88)(70,86)
 \qbezier(70,86)(60,84.5)(40,83.5)
\qbezier(40,83.5)(20,83)(10,82.7)
\put(82,68){\line(0,1){3}}
\put(82,76){\line(0,1){3}}
\put(82,83){\line(0,1){3}}
\put(82,91){\line(0,1){3}}
 \put(84,70){\line(0,1){2}}
 \put(84,76.5){\line(0,1){2}}

 \put(84,83){\line(0,1){3}}
 \put(84,91){\line(0,1){9}}



\put(86,92){\line(0,1){10}}

 \put(88,73){\line(0,1){30}}
\put(90,74){\line(0,1){30}}
 \put(92,74){\line(0,1){30}}
\put(94,74.5){\line(0,1){30}}
 \put(96,75){\line(0,1){30}}
\put(98,75){\line(0,1){30}}
 \put(100,75){\line(0,1){30}}
\put(102,75.6){\line(0,1){30}}
 \put(104,75.5){\line(0,1){30}}
 \put(106,76){\line(0,1){30}}
 \put(108,76.2){\line(0,1){30}}
 \put(110,76.2){\line(0,1){30}}
 \put(110,76.3){\line(0,1){30}}
 \put(112,76.4){\line(0,1){30}}
  \put(114,76.4){\line(0,1){30}}
   \put(116,76.5){\line(0,1){30}}
    \put(118,76.5){\line(0,1){30}}
     \put(120,76.6){\line(0,1){30}}

 \put(122,76.6){\line(0,1){30}}
  \put(124,76.7){\line(0,1){30}}
   \put(126,76.7){\line(0,1){30}}
    \put(128,76.8){\line(0,1){30}}
     \put(130,76.8){\line(0,1){30}}
 \put(132,76.9){\line(0,1){30}}
  \put(134,76.9){\line(0,1){30}}
   \put(136,77){\line(0,1){30}}
    \put(138,77){\line(0,1){30}}
     \put(140,77.1){\line(0,1){30}}
 \put(142,77.1){\line(0,1){30}}
  \put(144,77.2){\line(0,1){30}}
   \put(146,77.2){\line(0,1){30}}
    \put(148,77.3){\line(0,1){30}}
     \put(150,77.3){\line(0,1){30}}
\put(78,62){\line(0,1){30}} \put(76,60.5){\line(0,1){30}}
\put(74,58.5){\line(0,1){30}}
 \put(72,57){\line(0,1){30}}
\put(70,56){\line(0,1){30}}
 \put(68,56){\line(0,1){30}}
\put(66,55.5){\line(0,1){30}}
 \put(64,55){\line(0,1){30}}
\put(62,55){\line(0,1){30}}
\put(60,55){\line(0,1){30}}
\put(58,54.4){\line(0,1){30}}
 \put(56,54.5){\line(0,1){30}}
 \put(54,54){\line(0,1){30}}
 \put(52,53.8){\line(0,1){30}}
 \put(50,53.8){\line(0,1){30}}
 \put(48,53.8){\line(0,1){30}}
 \put(46,53.6){\line(0,1){30}}
  \put(44,53.6){\line(0,1){30}}
   \put(42,53.5){\line(0,1){30}}
    \put(40,53.5){\line(0,1){30}}
     \put(38,53.4){\line(0,1){30}}
 \put(36,53.4){\line(0,1){30}}
  \put(34,53.3){\line(0,1){30}}
   \put(32,53.3){\line(0,1){30}}
    \put(30,53.2){\line(0,1){30}}
     \put(28,53.2){\line(0,1){30}}
 \put(26,53.1){\line(0,1){30}}
  \put(24,53.1){\line(0,1){30}}
   \put(22,53){\line(0,1){30}}
    \put(20,53){\line(0,1){30}}
     \put(18,52.9){\line(0,1){30}}
 \put(16,52.9){\line(0,1){30}}
  \put(14,52.8){\line(0,1){30}}
   \put(12,52.8){\line(0,1){30}}
    \put(10,52.7){\line(0,1){30}}

\qbezier(80,65)(85,72)(85,72)
\qbezier(87.5,73)(88,73.5)(90,74)
 \qbezier(90,74)(100,75.5)(120,76.5)
\qbezier(120,76.5)(140,77)(150,77.3)


\qbezier(80,65)(75,58)(70,56) \qbezier(70,56)(60,54.5)(40,53.5)
\qbezier(40,53.5)(20,53)(10,52.7)
 \qbezier(80,35)(85,42)(90,44)
\qbezier(90,44)(100,45.5)(120,46.5)
\qbezier(120,46.5)(140,47)(150,47.3)
\qbezier(80,35)(75,28)(70,26) \qbezier(70,26)(60,24.5)(40,23.5)
\qbezier(40,23.5)(20,23)(10,22.7)

 \put(82,8){\line(0,1){4.5}}
 \put(82,15){\line(0,1){2.3}}
 \put(82,24){\line(0,1){3}}
 \put(82,30){\line(0,1){7}}

 \put(84,11){\line(0,1){1.5}}
\put(84,14.5){\line(0,1){3}}
 \put(84,24){\line(0,1){3}}
 \put(84,32){\line(0,1){7}}



\put(86,33){\line(0,1){8}}


 \put(88,17){\line(0,1){2}}
 \put(88,24){\line(0,1){3}}
 \put(88,32){\line(0,1){11}}


\put(90,14){\line(0,1){30}}
 \put(92,14){\line(0,1){30}}
\put(94,14.5){\line(0,1){30}}
 \put(96,15){\line(0,1){30}}
\put(98,15){\line(0,1){30}}
 \put(100,15){\line(0,1){30}}
\put(102,15.6){\line(0,1){30}}
 \put(104,15.5){\line(0,1){30}}
 \put(106,16){\line(0,1){30}}
 \put(108,16.2){\line(0,1){30}}
 \put(110,16.2){\line(0,1){30}}
 \put(110,16.3){\line(0,1){30}}
 \put(112,16.4){\line(0,1){30}}
  \put(114,16.4){\line(0,1){30}}
   \put(116,16.5){\line(0,1){30}}
    \put(118,16.5){\line(0,1){30}}
     \put(120,16.6){\line(0,1){30}}

 \put(122,16.6){\line(0,1){30}}
  \put(124,16.7){\line(0,1){30}}
   \put(126,16.7){\line(0,1){30}}
    \put(128,16.8){\line(0,1){30}}
     \put(130,16.8){\line(0,1){30}}
 \put(132,16.9){\line(0,1){30}}
  \put(134,16.9){\line(0,1){30}}
   \put(136,17){\line(0,1){30}}
    \put(138,17){\line(0,1){30}}
     \put(140,17.1){\line(0,1){30}}
 \put(142,17.1){\line(0,1){30}}
  \put(144,17.2){\line(0,1){30}}
   \put(146,17.2){\line(0,1){30}}
    \put(148,17.3){\line(0,1){30}}
     \put(150,17.3){\line(0,1){30}}

\put(78,2){\line(0,1){30}}
 \put(76,0.5){\line(0,1){30}}
\put(74,-2.1){\line(0,1){30}}
 \put(72,-3){\line(0,1){30}}
\put(70,-4){\line(0,1){30}}
 \put(68,-4){\line(0,1){30}}
\put(66,-4.5){\line(0,1){30}}
 \put(64,-5){\line(0,1){30}}
\put(62,-5){\line(0,1){30}}

\put(60,-5.2){\line(0,1){30}}

\put(58,-5.6){\line(0,1){30}}
 \put(56,-5.5){\line(0,1){30}}
 \put(54,-6){\line(0,1){30}}
 \put(52,-6.2){\line(0,1){30}}
 \put(50,-6.2){\line(0,1){30}}
 \put(48,-6.2){\line(0,1){30}}
 \put(46,-6.4){\line(0,1){30}}
  \put(44,-6.4){\line(0,1){30}}
   \put(42,-6.5){\line(0,1){30}}
    \put(40,-6.5){\line(0,1){30}}
     \put(38,-6.6){\line(0,1){30}}
 \put(36,-6.6){\line(0,1){30}}
  \put(34,-6.7){\line(0,1){30}}
   \put(32,-6.7){\line(0,1){30}}
    \put(30,-6.8){\line(0,1){30}}
     \put(28,-6.8){\line(0,1){30}}
 \put(26,-6.9){\line(0,1){30}}
  \put(24,-6.9){\line(0,1){30}}
   \put(22,-7){\line(0,1){30}}
    \put(20,-7){\line(0,1){30}}
     \put(18,-7.1){\line(0,1){30}}
 \put(16,-7.1){\line(0,1){30}}
  \put(14,-7.2){\line(0,1){30}}
   \put(12,-7.2){\line(0,1){30}}
    \put(10,-7.3){\line(0,1){30}}

\qbezier(80,65)(85,72)(85,72)
\qbezier(87.5,73)(88,73.5)(90,74)

 \qbezier(90,74)(100,75.5)(120,76.5)
\qbezier(120,76.5)(140,77)(150,77.3)


\qbezier(80,5)(85,12)(85,12)


 \qbezier(90,14)(100,15.5)(120,16.5)
\qbezier(120,16.5)(140,17)(150,17.3)


\qbezier(80,5)(75,-2)(70,-4) \qbezier(70,-4)(60,-5.5)(40,-6.5)
\qbezier(40,-6.5)(20,-7)(10,-7.3)







\linethickness{0.3mm} \put (10,80){\line(1,0){70}}
\linethickness{0.3mm} \put (88,80){\line(1,0){62}}



\linethickness{0.3mm}\put(10,20){\line(1,0){70}}
\linethickness{0.3mm}\put(89,20){\line(1,0){61}}
{\linethickness{0.3mm}\put(140,80){\vector(-1,0){20}}}
{\linethickness{0.3mm}\put(120,20){\vector(1,0){20}}}



\put(104.4,81){$\gamma_1$}
\put(104.4,21){$\gamma_2$}


\thinlines \put(80,-12){\vector(0,1){130}}


 \end{picture}
}

\vspace{1cm}
 \centerline{Figure 3. Contour $\mathcal{C}_{0}$.}


\vspace{0.5cm}
A crucial role in these representations plays the fact that the integrals in (\ref{udc}) converge absolutely for $\omega\in\mathbb{R}$ since the integrand $H$ in (\ref{H}) and (\ref{H1V}) decreases exponentially. In the case of a NN-wedge (see Remark 2.5 in \cite{la}) the corresponding integrand takes the form
\begin{eqnarray}\label{HN}
H_N(\beta,\Phi):= \coth\left[q \left(\beta + i\ \dfrac{\pi}{2} - i\alpha\right)   \right]
            +\coth\left[q \left(\beta- i\ \displaystyle\frac{3\pi}{2} + i\alpha \right) \right], \qquad \beta\in\mathbb{C},
\end{eqnarray}
thus the integral (\ref{udc}) does not converge absolutely for $\omega\in\mathbb{R}$ (it converges conditionally). We find an asymptotic of the convergence to the limiting amplitude and compare our results with \cite{hew} for scattering on  a half plane.
 So the proof of (\ref{ud2}) is not obtained by an application of the method \cite{la}-\cite{mm} directly. We apply some trick in this case to obtain (\ref{ud2}), (see Lemma \ref{propJ}). We will use this representation to prove that the solution of the nonstationary scattering problem belongs to a functional space securing the uniqueness and that the Limiting Amplitude Principle holds.
The paper is organized as follows.  In section 2 we formulate the problem for nonstationary plane wave scattering  on a NN-wedge. In section 3 we give a solution to the stationary NN-problem with a complex parameter. In sections 4 and 5 we prove an explicit representation for the diffracted wave with a smooth profile function and in section 6 we give a complete solution to the scattering problem and prove the existence and uniqueness of the solution. In sections 7 and 8 we prove  the Limiting Amplitude Principle and  we obtain a rate of convergence to the limiting amplitude.  Finally, in section 9 we consider the case of half plane and  compare our results with the results of \cite{hew} when the impinging wave profile is the Heaviside function.
\section{The statement of the problem}
Let us proceed  to the  exact formulation. We denote by $Q:=\mathbb{R}^2\setminus W$ the complement angle of magnitude
\begin{eqnarray}\label{defFi}
\Phi:=2\pi - \phi, \qquad \Phi\in(\pi,2\pi]
\end{eqnarray}
and $\partial Q = Q_1\cup Q_2$, where $Q_1:=\left\{(y_1,0):y_1>0\right\}$ and $Q_2:=\left\{(\rho\cos\phi,\rho\sin\phi):\rho>0\right\}$. The front of the wave  $u_{in}(y,t)$  is the line
$\{y\in\mathbb{R}^2 : t - \mathbf{n_0} \cdot y = 0\}$ in $\mathbb{R}^2$ at the moment $t$. To the right of this line namely, for $\mathbf{n_0}\cdot y>t$, $u_{in}(y,t)=0$ by (\ref{uin}) and (\ref{f}). We impose the following conditions on the vector $\mathbf{n_0}$. Suppose that for $t \leq 0$, the front of $u_{in}(y,t)$ is disjoint from $W\setminus\{0\}$. This is equivalent to the condition $\phi-\frac{\pi}{2}<\alpha<\frac{\pi}{2}$. Moreover, suppose that the incident wave is reflected by both sides of the wedge. This is equivalent to the condition $0<\alpha<\phi$. Therefore, these two conditions on the vector $\mathbf{n_0}$ are expressed by the inequalities
\begin{equation} \label{alfa}
\text{max} \left\{\phi - \frac{\pi}{2}, 0 \right\} < \alpha < \text{min}  \left\{\frac{\pi}{2}, \phi\right\} .
\end{equation}
If some of these conditions are not satisfied, then the statement of the problem is slightly more complicated technically, but not fundamentally. All the final formulas for the solution will be valid for arbitrary $\alpha$, see Remark \ref{obsalpha}. To avoid these technical complications, we assume below that (\ref{alfa}) holds (see Figure 1). The scattering of the incident wave $u_{in}$ by the NN-wedge $W$ is described by means of the following initial boundary value problem
\begin{equation}\label{NP}
\left\{     \begin{array}{rcl}
            \square u(y,t,\Phi)=0, &\quad  & y\in Q   \\
            \dfrac{\partial\ }{\partial \mathbf{n}} u(y,t,\Phi)=0,         &  & y\in \partial Q
            \end{array}
\right| t\in\mathbb{R}
\end{equation}
where $\square:=\partial_t^2-\Delta$. Here $\dfrac{\partial\ }{\partial \mathbf{n}}$ means the normal exterior derivative. We include the incident wave $u_{in}$ in the statement of the problem through the initial condition
\begin{equation}\label{ic}
u(y,t)=u_{in}(y,t), \qquad\qquad y\in Q, \qquad t<0.
\end{equation}
It is possible since $u_{in}(y,t)$ is a solution to problem (\ref{NP}) for $t<0$. The Neumann conditions in (\ref{NP}) hold for $t<0$ since $u_{in}$ is identically zero in a neighborhood of $\partial Q$. Let us denote the scattered  wave by
$u_s(y,t):=u(y,t)-u_{in}(y,t)$.
Obviously $u_s$ satisfies the following mixed problem
\begin{equation}\label{Pus}
\left\{
        \begin{array}{rcll}
        \square  u_s(y,t) &=& 0,                   &   y\in Q,\ t>0    \\
        \dfrac{\partial\ }{\partial\mathbf{n}} u_s(y,t) &=&
      - \dfrac{\partial\ }{\partial\mathbf{n}} u_{in}(y,t),  \qquad   &   y\in \partial Q,\ t>0    \\
        u_s(y,0) &=& \dot{u}_s(y,0)\ =\ 0, & y\in Q.
        \end{array}
\right.
\end{equation}
Define the reflected wave $u_r(y,t)$ as
\begin{eqnarray} \label{ur}
u_r(\rho,\theta,t):=\left\{
									\begin{array}{cl}
									 u_{r,1}(\rho,\theta,t),&\qquad \phi \leq \theta \leq \theta_1\\
									0,& \qquad\theta_1< \theta<\theta_2\\
									 u_{r,2}(\rho,\theta,t),&\qquad\theta_2 \leq \theta \leq 2\pi\\
									\end{array}
					\right.
\end{eqnarray}
where $y=\rho e^{i\theta}$, $\theta_1,\theta_2$ are given by (\ref{T1T2}) and $u_{r,1}$, $u_{r,2}$ are the plane waves reflected by $Q_1$ and $Q_2$, respectively and they are defined as
$u_{r,1}(\rho,\theta,t) = e^{i(\mathbf{k_1}\cdot y-\omega_0 t)}f(t-\mathbf{v_1}\cdot y)$, 
$u_{r,2}(\rho,\theta,t) = e^{i(\mathbf{k_2}\cdot y-\omega_0 t)}f(t-\mathbf{v_2}\cdot y)$.
Here
$\mathbf{k_1} = \omega_0 \mathbf{v_1}$, 
$\mathbf{v_1} = (\cos \theta_1,\sin \theta_1)$ and 
$\mathbf{k_2} = \omega_0 \mathbf{v_2}$, 
$\mathbf{v_2} = (\cos \theta_2,\sin \theta_2)$.
In the present paper we prove the existence of a solution to the nonstationary problem (\ref{Pus}) in a functional space $\mathcal{E}_{\varepsilon,N}$. We will prove that this solution (the scattered wave $u_s$) is represented as a sum of the reflected wave $u_r$ (\ref{ur}) and the diffracted wave $u_d$ (see Section \ref{secSolSta}). We find the Sommerfeld-Malyuzhinets type representation for the diffracted wave and we prove that it has the form (\ref{ud2}) with
\begin{eqnarray}\label{ZN}
    Z(\beta,\Phi) = Z_N(\beta,\Phi)
                 &:=& H_N\left(\beta-i\dfrac{5\pi}{2},\Phi\right) - H_N\left(\beta-i\dfrac{\pi}{2},\Phi\right),
    \qquad \beta\in\mathbb{C},
\end{eqnarray}
where $H_N$ is given by (\ref{HN}). We prove that this formula coincides with (14) and (15) in \cite{hew} for $\Phi=2\pi$. Using this formula we determine the values of parameters $\varepsilon$ and $N$ in the space $\mathcal{E}_{\varepsilon,N}$. Moreover, we prove the Limiting Amplitude Principle and find the rate of convergence to the Limiting Amplitude.

\section{Solution to the  ``stationary" NN-problem with a parameter}\label{secSolSta}

First we consider the case $\pi<\Phi<2\pi$. For convenience, we omit the variable $\Phi$ in the notation of $H$, $H_N$, $Z$, $Z_N$, $u_d$ and $u_r$.  

After the Fourier-Laplace transform $t\rightarrow \omega$ in  problem (\ref{Pus}), we come to the following ``stationary" NN-problem
\begin{equation}\label{SPNN}
\left\{
	   \begin{array}{rcl}
       (\Delta + \omega^2)\ \widehat{u}_s(y,\omega)
            & = & 0,  \hspace{5.7cm} y\in Q   \\ 
       \dfrac{\partial\ }{\partial y_2}\widehat{u}_s(y,\omega)
            & = &  -i\omega \widehat{g}(\omega) \sin\alpha\  e^{i\omega y_1 \cos\alpha}, \hspace{1.8cm} y\in Q_1 \\ 
       \dfrac{\partial\ }{\partial \mathbf{n_2}}\widehat{u}_s(y,\omega)
            & = & i\omega \widehat{g}(\omega) \sin(\Phi+\alpha)
                  e^{-i\omega y _2 \frac{\cos(\Phi+\alpha)}{\sin\Phi}},\ \quad y\in Q_2
		\end{array}
\right.
\end{equation}
 with a parameter $\omega\in\mathbb{C}^+:=\{z\in\mathbb{C}:\mathrm{Im}\ z> 0\}$.
This is done similarly to (31) in \cite{la} and (3.7) in \cite{mm}. The solution of this problem admits the following representation in the polar coordinates
	\begin{eqnarray}\label{TFus}
				\widehat{u}_{s}(\rho,\theta,\omega)
						&:=& -  \widehat{g}(\omega) e^{i\rho\omega\cos(\theta-\alpha)} +
                            \displaystyle\frac{i\widehat{g}(\omega)}{4\Phi}
						    \int\limits_{\mathcal{C}} e^{-\rho \omega \sinh\beta} H_N(\beta+i\theta)\  \mathrm{d}\beta,\quad
                            (\rho,\theta)\in Q,\qquad
				\end{eqnarray}
where $H_N$ is function (\ref{HN}) and  $\mathcal{C}$ is the Sommerfeld-type contour
\begin{eqnarray}\label{C}
\mathcal{C} &=& \mathcal{C}_1 \cup \mathcal{C}_2.
\end{eqnarray}
Here
\begin{equation}\label{C1}
\mathcal{C}_1 =  \left\{\beta_1-i\frac{\pi}{2}:\beta_1\geq 1\right\}\cup
                        \left\{1+i\beta_2:-\frac{5\pi}{2}\leq\beta_2\leq -\frac{\pi}{2}\right\} \cup
                        \left\{\beta_1-i\frac{5\pi}{2}:\beta_1\geq 1\right\}
\end{equation}
and $\mathcal{C}_2:=-\mathcal{C}_1-3i\pi$.
We choose the  counter-clockwise orientation for the contours $\mathcal{C}_{1,2}$, (see Figure 4). Obviously the integral in (\ref{TFus}) converges absolutely since for any $\beta:=\beta_1+i\beta_2\in\mathcal{C}$ and for any $\omega:=\omega_1+i\omega_2\in\mathbb{C}^+$, $\mathrm{Re}(-\rho\omega\sinh\beta) = -\rho\omega_2\cosh\beta_1$. Representation (\ref{TFus}) may be obtained by the Method of the Complex Characteristics (see \cite{kmz}-\cite{kmr}). This method was used to obtain the similar formulas (150), (148) in \cite{la} and  (10.4), (14.2) in \cite{mm} for the DD and DN-wedges, respectively.  Similarly to Corollary 5.2 in \cite{la} it is proved that $u_s\in C^{\infty}(\overline{Q})$.
We prove directly that (\ref{TFus}) satisfies (\ref{SPNN}) in Appendix A1, Lemma \ref{comprous},
and $u_s\in \mathrm{C}^\infty(\overline{Q})$ in Section \ref{smoothus}.


\vspace{0.4cm}

{\scriptsize

\setlength{\unitlength}{1mm}
 \begin{picture} (110,100)
 \put(30,75) {\vector (1,0){90}} \put(123,74){$\mathrm{Re}\ \beta$}
 \put(40,75) {\line (-1,0){10}}

 \put(75.1,90) {\circle* {0.7}}\put(76.2,89){$\frac{\pi}{2}$}
 \put(75.1,75) {\circle* {0.7}}\put(72,76){$0$}
 \put(75.1,60) {\circle* {0.7}}\put(76.2,59){$-\frac{\pi}{2}$}
 \put(75.1,45) {\circle* {0.7}}\put(76.2,44){$-\pi$}
 \put(75.1,30) {\circle* {0.7}}\put(76.2,29){$-\frac{3\pi}{2}$}
 \put(75.1,15) {\circle* {0.7}}\put(76.2,14){$-2\pi$}
 \put(75.1,0.2) {\circle* {0.7}}\put(76.2,0){$-\frac{5\pi}{2}$}

 \put(75,05){\vector(0,1){90}} \put(73,98){$\mathrm{Im}\ \beta$}
 \put(75,10){\vector(0,-1){12.3}}


 \put(66,0.2){\line(0,1){60}}
 \put(66,25){\vector(0,1){10}}
 \put(66,60){\line (-1,0){35}}
 \put(60,60){\vector (-1,0){10}}\put(50,62){$\mathcal{C}_2$}
 \put(66,0.2){\line (-1,0){35}}
 \put(50,0.2){\vector (1,0){10}}

 \put(84,0.2){\line(0,1){60}}
 \put(84,50){\vector(0,-1){10}}
 \put(84,60){\line (1,0){35}}
 \put(110,60){\vector (-1,0){10}}\put(95,62){$\mathcal{C}_1$}
 \put(84,0.2){\line (1,0){35}}
 \put(90,0.2){\vector (1,0){10}}

 \end{picture}
}

 \vspace{0.5cm}
 \centerline{Figure 4. Contour $\mathcal{C}.$ }
\vspace{0.6cm}
Let us decompose the ``stationary" solution $\widehat{u}_s$ into the reflected ``stationary" wave $\widehat{u}_r$ and diffracted ``stationary" wave $\widehat{u}_d$. Changing the contour $\mathcal{C}$ (\ref{C}) by the contour $\mathcal{C}_0$ (\ref{C0}) and calculating residues we obtain 
that
$\widehat{u}_s = \widehat{u}_r + \widehat{u}_d$,
where
\begin{equation*}
\left.
\begin{array}{rcl}
\widehat{u}_r(\rho,\theta,\omega) &=&
                    \left\{
									\begin{array}{cl}
									 \widehat{u}_{r,1}(\rho,\theta,\omega),&\qquad \phi \leq \theta \leq \theta_1\\
									0,& \qquad\theta_1< \theta<\theta_2\\
									 \widehat{u}_{r,2}(\rho,\theta,\omega),&\qquad\theta_2 \leq \theta \leq 2\pi\\
									\end{array}
					\right.
\end{array}
\right|\ (\rho,\theta)\in Q, \ \theta\neq \theta_1, \theta_2,
\end{equation*}
with
      $\widehat{u}_{r,1}(\rho,\theta,\omega) = \widehat{g}(\omega) \ e^{i\omega\rho\cos(\theta-\theta_1)}$
and   $\widehat{u}_{r,2}(\rho,\theta,\omega) = \widehat{g}(\omega) \ e^{i\omega\rho\cos(\theta-\theta_2)}$.
Here the ``critical" directions $\theta_{1,2}$ are given by (\ref{T1T2}) and $\widehat{u}_d$ is given by (\ref{udc}). It is not difficult to see that the inverse Fourier-Laplace transform of $\widehat{u}_r$ is the function $u_r$ given by (\ref{ur}). Thus, the solution $u_s(y,t)$ of the problem (\ref{Pus}) is decomposed into $u_r(y,t)$ and $u_d(y,t)$ in the following form:
$u_s(y,t) = u_r(y,t) + u_d(y,t),$
 where
$u_d(y,t) = F^{-1}_{\omega\rightarrow t}\big[\widehat{u}_d(y,\omega)\big]$.
It remains only to prove that the function $u_d(y,t)$ admits the representation (\ref{ud2}) and to verify that the function $Z_N$ given in (\ref{ZN}) is an appropriate one for the diffracted wave. To do this we need first the continuation of the function $\widehat{u}_d(\cdot,\cdot,\omega)$ to $\overline{\mathbb{C}^+}$.


\section{Extension of diffracted wave density function  }
\begin{defn}\label{defJ} Let
\begin{equation}\label{nt}
    \Theta := [\phi,2\pi]\setminus\{\theta_1,\theta_2\}.
\end{equation}
where $\theta_1,\theta_2$ are defined in (\ref{T1T2}). For any
$\omega\in\mathbb{C}^+$, $(\rho,\theta)\in \mathbb{R}^+\times \Theta$ let us define
\begin{equation}\label{JN}
\mathcal{J}_{d} (\rho,\theta,\omega):=
   \int\limits_{\mathcal{C}_{0}} e^{-\rho \omega \sinh\beta} H_N(\beta+i\theta)\  \mathrm{d}\beta,
\end{equation}
with $H_N$ defined by (\ref{HN}).
\end{defn}
First we extend $\mathcal{J}_d(\cdot,\cdot,t)$ to $\overline{\mathbb{C}^+}$. We will denote by $\mathrm{H}(\Omega)$, the space of the holomorphic functions in the region $\Omega\subset\mathbb{C}$. Everywhere below we will assume that $(\rho, \theta)\in \mathbb{R}^+\times \Theta$, $\omega=\omega_1+i\omega_2$ and $\beta=\beta_1+i\beta_2$.




\begin{lem}\label{propJ}
\begin{enumerate}[i)]

\item The integral in (\ref{JN}) converges absolutely for $\omega\in\mathbb{C}^+$,
 and determines an analytic function in $\omega\in\mathbb{C}^+$:
            \begin{equation}\label{JNH}
            \mathcal{J}_{d}(\rho,\theta,\omega)\in \mathrm{H}(\mathbb{C}^+), \qquad (\rho,\theta)\in\mathbb{R}^+\times\Theta.
            \end{equation}

  \item The function $\mathcal{J}_{d}(\cdot,\cdot,\omega)$ is represented as
\begin{eqnarray}\label{repJd}
\mathcal{J}_{d}(\rho,\theta,\omega) &=& \int\limits_\mathbb{R} e^{i\rho\omega\cosh\beta} Z_N(\beta + i\theta)\ \mathrm{d}\beta, \qquad \omega\in\overline{\mathbb{C}^+}
\end{eqnarray}
where $Z_N$ is given by (\ref{ZN}) and it admits a continuous extension to $\overline{\mathbb{C}^+}$.

  \item  $\mathcal{J}_{d}(\cdot,\cdot,\omega)\in C(\mathbb{R})$ and the following estimate holds
\begin{eqnarray}\label{bjd}
        |\mathcal{J}_{d}(\rho,\theta,\omega)| &\leq& C(\theta),   \qquad \omega\in\overline{\mathbb{C}^+}.
\end{eqnarray}

  \item The following limit holds
\begin{eqnarray}\label{limJd}
\mathcal{J}_{d}(\rho,\theta,\omega_1)=\lim\limits_{\omega_2 \rightarrow 0} \mathcal{J}_{d}(\rho,\theta,\omega_1+i\omega_2)
\end{eqnarray}
   in the sense of $\mathcal{S}'(\mathbb{R}_{\omega_1})$.

\end{enumerate}
\end{lem}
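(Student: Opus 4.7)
The four parts are naturally established in the order stated, each building on the previous one. For (i), I would parametrize $\mathcal{C}_0=\gamma_1\cup\gamma_2$ by the real axis, writing $\beta=\beta_1-i\pi/2$ on $\gamma_1$ and $\beta=\beta_1-i5\pi/2$ on $\gamma_2$. A direct calculation gives $\sinh(\beta_1-i\pi/2)=\sinh(\beta_1-i5\pi/2)=-i\cosh\beta_1$, hence $|e^{-\rho\omega\sinh\beta}|=e^{-\rho\omega_2\cosh\beta_1}$, which decays rapidly in $|\beta_1|$ whenever $\omega_2>0$. The hypothesis $\theta\in\Theta$, combined with the explicit location of the poles of the two $\coth$ terms in (\ref{HN}), ensures that $H_N(\beta+i\theta)$ has no poles on $\gamma_1\cup\gamma_2$ and remains bounded there. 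Absolute convergence of (\ref{JN}) follows, and analyticity in $\omega\in\mathbb{C}^+$ is obtained in the standard way (e.g.\ by Morera's theorem) from the uniform exponential bound on compact subsets of $\mathbb{C}^+$.

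The crucial step is (ii). On each of $\gamma_1$, $\gamma_2$ alone the integrand $H_N$ does not decay at infinity---it tends to $\pm 2$---so the individual contour pieces cannot be pushed down to the real axis and integrated absolutely for real $\omega$. The trick is to apply both substitutions and then sum the resulting contributions, respecting the orientation indicated in Figure~3 ($\gamma_1$ traversed from $+\infty$ to $-\infty$, $\gamma_2$ from $-\infty$ to $+\infty$). The result is the single real-axis integral $\int_{\mathbb{R}} e^{i\rho\omega\cosh\beta_1}\bigl[H_N(\beta_1+i\theta-i5\pi/2)-H_N(\beta_1+i\theta-i\pi/2)\bigr]\,d\beta_1$, whose bracket is precisely $Z_N(\beta_1+i\theta)$ by definition (\ref{ZN}). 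Using the asymptotics $\coth(z)\mp 1=O(e^{\mp 2\mathrm{Re}\,z})$ as $\mathrm{Re}\,z\to\pm\infty$, the four $\coth$ contributions inside $Z_N$ cancel to leading order, producing an estimate of the form $|Z_N(\beta_1+i\theta)|\leq C(\theta)\,e^{-2q|\beta_1|}$. This exponential decay is precisely what makes (\ref{repJd}) converge all the way up to the real $\omega$-axis, where the factor $e^{i\rho\omega\cosh\beta_1}$ is only bounded.

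With (\ref{repJd}) in hand, part (iii) is immediate: for $\omega\in\overline{\mathbb{C}^+}$ one has $|e^{i\rho\omega\cosh\beta_1}|=e^{-\rho\omega_2\cosh\beta_1}\leq 1$, so the integrability of $Z_N(\cdot+i\theta)$ gives the uniform bound (\ref{bjd}), and continuity of $\mathcal{J}_d(\rho,\theta,\cdot)$ on $\mathbb{R}$ follows by dominated convergence with majorant $|Z_N(\beta_1+i\theta)|$. Part (iv) is then a direct consequence: the pointwise limit $\mathcal{J}_d(\rho,\theta,\omega_1+i\omega_2)\to\mathcal{J}_d(\rho,\theta,\omega_1)$ as $\omega_2\to 0^+$ again comes from dominated convergence in (\ref{repJd}), and the uniform bound (\ref{bjd}) lets me invoke Lebesgue's theorem after pairing against any $\varphi\in\mathcal{S}(\mathbb{R}_{\omega_1})$, giving the claimed $\mathcal{S}'$-convergence. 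The main obstacle throughout is step (ii): the bookkeeping of signs from the contour orientations, the correct identification of the two $\coth$ shifts making the cancellation transparent, and the quantitative exponential decay of $Z_N$ needed to control the non-decaying oscillatory factor on the real axis---exactly the ``trick'' flagged in the introduction, and the reason why the DD and DN arguments of \cite{la} and \cite{mm} do not transfer directly.
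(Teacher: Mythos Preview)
Your proposal is correct and follows essentially the same route as the paper: the same parametrization of $\mathcal{C}_0$ and computation of $|e^{-\rho\omega\sinh\beta}|$, the same two shifts $\beta\mapsto\beta+i\pi/2$ and $\beta\mapsto\beta+i5\pi/2$ to collapse the integral onto $\mathbb{R}$ with integrand $Z_N$, the same decay estimate $|Z_N(\beta+i\theta)|\le C(\theta)e^{-2q|\beta|}$ as the key to extending to $\overline{\mathbb{C}^+}$, and the same dominated-convergence arguments for (iii) and (iv). The only cosmetic difference is that the paper establishes analyticity by differentiating under the integral sign rather than via Morera.
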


\begin{proof} \textbf{i)}
From (\ref{HN}) it follows that for $\theta\in[\phi,2\pi]$ the set $P(\theta)$ of poles of the function $H_N(\beta+i\theta)$ is given by
\begin{eqnarray}\label{poloHNth}
 P(\theta) = P_1(\theta) \cup \Big[-P_1(\theta)+i\pi-2i\theta\Big]
\end{eqnarray}
where
 $P_1(\theta) = \left\{-i\frac{ \pi}{2} + i\alpha + 2ik\Phi - i\theta : k\in\mathbb{Z}\right\}$.
This implies that for $\theta\in[\phi,2\pi]$:
$H_N(\beta+i\theta)  \in \mathrm{H}(\mathbb{C}\setminus P(\theta))$,
$-i\frac{ \pi}{2}\in P(\theta)$  only for $\theta=\theta_2$  and
$-i\frac{5\pi}{2}\in P(\theta)$  only for $\theta=\theta_1$.
Hence $H_N(\beta+i\theta)$ admits the following bound
\begin{equation}\label{Lon1}
|H_N(\beta+i\theta)|\leq C(\theta), \qquad \qquad \beta\in\mathcal{C}_{0}, \ \theta \neq\theta_1, \theta_2.
\end{equation}
Therefore 
the integral (\ref{JN}) converges absolutely for $\omega\in\mathbb{C}^+$, since
\begin{eqnarray}\label{NBHnew}
|e^{-\rho\omega\sinh\beta}| &=& e^{- \rho\omega_2\cosh\beta_1}, \qquad \beta=\beta_1+i\beta_2\in\mathcal{C}_{0}.
\end{eqnarray}
To prove statement (\ref{JNH}) it suffices to check that for any $\delta>0$ and for any $\theta\neq\theta_1,\theta_2$, the integral
   $\displaystyle \int\limits_{\mathcal{C}_0}
    \frac{\partial}{\partial\omega} \Big[e^{-\rho \omega \sinh\beta} \Big]\cdot H_N(\beta+i\theta)  \  \mathrm{d}\beta$
converges absolutely and uniformly with respect to $\omega\in\mathbb{C}^+$, $\mathrm{Im}\ \omega\geq\delta>0$. It follows from estimate (\ref{Lon1}) and from (\ref{NBHnew}).


\noindent \textbf{ii)} Making the change of variable $\beta\mapsto\beta+i\frac{\pi}{2}$ for $\beta\in\gamma_1$ and $\beta\mapsto\beta+i\frac{5\pi}{2}$ for $\beta\in\gamma_2$ we reduce the integral (\ref{JN}) to the integral (\ref{repJd}). This representation (in contrast to (\ref{JN})) admits a  continuation to
$\overline{\mathbb{C}^+}$ since the integrand $Z_N$ (in contrast to $H_N$) admits the estimate
\begin{eqnarray}\label{BZ}
        |Z_N(\beta+i\theta)| &\leq& C(\theta) e^{-2q|\beta|}
        ,   \qquad \beta\in\mathbb{R},\ \theta\neq\theta_1,\theta_2,
\end{eqnarray}
by (\ref{ZN}). Hence the integral (\ref{repJd}) remains convergent for $\omega\in\mathbb{R}$. Let us prove that $\mathcal{J}_{d} (\cdot,\cdot,\omega)$ is continuous in $\omega\in\overline{\mathbb{C}^+}$. 
If $\omega_k\rightarrow\overline{\omega}$ when $k\rightarrow +\infty$, $\omega_k\in\overline{\mathbb{C}^+}$, 
then
$
\displaystyle\lim\limits_{k\rightarrow+\infty} e^{i\rho\omega_k \cosh\beta}\ Z_N(\beta+i\theta)
        = e^{i\rho\overline{\omega} \cosh\beta} Z_N(\beta+i\theta)$, 
        $\beta\in\mathbb{R}
$
and
$
|e^{i\rho\omega_k \cosh\beta}\ Z_N(\beta+i\theta)| \leq C(\theta) \ e^{-2q|\beta|}$, 
$k\in\mathbb{N}
$,
because of $\omega_k\in\overline{\mathbb{C}^+}$ and  (\ref{BZ}). Therefore by the Lebesgue's Dominated Convergence Theorem
\begin{equation}\label{isla}
\lim\limits_{k\rightarrow+\infty} \mathcal{J}_{d} (\cdot,\cdot,\omega_k) = \mathcal{J}_{d} (\cdot,\cdot,\overline{\omega}).
\end{equation}
It means that $\mathcal{J}_{d} (\cdot,\cdot,\omega)\in\mathrm{C}\left(\overline{\mathbb{C}^+}\right)$.

\noindent \textbf{iii)} 
Estimate (\ref{bjd}) follows from (\ref{repJd}), (\ref{BZ}), the fact that
$\left|e^{i\rho(\omega_1+i\omega_2) \cosh\beta}\right| = e^{-\rho\omega_2 \cosh\beta}$ for any $\beta\in\mathbb{R}$
and $\omega_2\geq 0$.


\noindent \textbf{iv)} Statement (\ref{limJd})  follows from (\ref{isla}) and (\ref{bjd}).
\end{proof}


We proceed to the extension of $\widehat{u}_d(\cdot,\cdot,\omega)$ 
to $\overline{\mathbb{C}^+}$. Let
\begin{eqnarray}\label{gR}
        \widehat{g}(\omega_1) &:=& F_{s\rightarrow\omega_1}\Big[e^{-i\omega_0 s} f(s)\Big],
                                   \qquad \omega_1\in\mathbb{R},
\end{eqnarray}
        where $f$ is given by (\ref{f}), $F_{s\rightarrow\omega_1}[\cdot]$ denotes the Fourier transform in the sense of $\mathcal{S}'$ associated to the classical Fourier transform  (\ref{TF}), and
\begin{eqnarray}\label{g1}
        \widehat{g}_1(\omega_1) &:= & i \ \widehat{f'}(\omega_1-\omega_0),
                                \qquad \omega_1\in\mathbb{R}.
\end{eqnarray}
Obviously
            \begin{equation}\label{rg1}
            g_1(s) = i e^{-i\omega_0 s}f'(s), \qquad s\in\mathbb{R}.
            \end{equation}
The following lemma is proved in Appendix A2.

\begin{lem}\label{propg}
\begin{enumerate}[i)]

  \item $\widehat{g}(\omega_1)$ admits an analytic continuation to $\mathbb{C}^+$ that is there exists the limit
            \begin{equation}\label{extg}
            \widehat{g}(\omega_1) = \lim\limits_{\omega_2\rightarrow 0+} \widehat{g}(\omega_1+i\omega_2)
            \end{equation}
            in the sense of       $\mathcal{S}'(\mathbb{R})$. (We denote this analytic continuation also by $\widehat{g}(\omega)$, $\omega\in\overline{\mathbb{C}^+}$ and we say that it is the \emph{Fourier-Laplace transform of } $e^{-i\omega_0 s} f(s)$).
            Moreover
            there exists $C>0$ such that
            \begin{equation}\label{cotag}
            \Big|\widehat{g}(\omega)\Big|\leq C(\mathrm{Im }\ \omega)^{-1}, \qquad \omega\in\mathbb{C}^+.
            \end{equation}

\item $\widehat{g}_1(\omega_1)$ admits an analytic continuation $\widehat{g}_1(\omega)$ to $\mathbb{C}$, that is
            \begin{equation}\label{g1Ana}
            \widehat{g}_1(\omega)\in \mathrm{H}(\mathbb{C})
            \end{equation}
            and for any $\omega_2\in\mathbb{R}$
            \begin{equation}\label{g1AnaR}
            \widehat{g}_1(\omega_1+i\omega_2)\in \mathcal{S}(\mathbb{R}_{\omega_1}), \qquad
            \left|\frac{\partial^{(k)}\ }{\partial\omega^k} \widehat{g}_1(\omega_1+i\omega_2)\right| \leq C_{k,N}(1+|\omega_1|)^{-N}.
            \end{equation}
            Moreover,
            \begin{equation}\label{gg1}
            \widehat{g}(\omega)  =   \frac{\widehat{g}_1(\omega)}{\omega-\omega_0}, \ \omega\in\mathbb{C}^+; \qquad
            \widehat{g}(\omega_1)  =   \frac{\widehat{g}_1(\omega_1)}{\omega_1-\omega_0+i0}, \ \omega_1\in\mathbb{R}
            \end{equation}
            and
            \begin{equation}\label{gC}
            \widehat{g}(\omega)\in\mathrm{C}^\infty(\mathbb{R}\setminus\{\omega_0\}).
            \end{equation}

\end{enumerate}
\end{lem}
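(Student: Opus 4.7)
My plan is to establish (ii) first, exploiting the compact support of $g_1$, and then deduce (i) from it via a single integration by parts that produces identity (\ref{gg1}).

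For (ii), note that $f\in C^\infty(\mathbb{R})$ is constant outside $[0,s_0]$, so $f'\in C_c^\infty(\mathbb{R})$ with $\mathrm{supp}\,f'\subset[0,s_0]$. By (\ref{rg1}) the same holds for $g_1$, and
$$
\widehat{g}_1(\omega)=\int_0^{s_0} e^{i\omega s}\,g_1(s)\,\mathrm{d}s
$$
is a compactly supported integral of an entire integrand, hence entire, which gives (\ref{g1Ana}). Integrating by parts $N$ times transfers the factor $(i\omega)^N$ onto derivatives of $g_1$; since $|e^{i\omega s}|=e^{-\omega_2 s}\le e^{|\omega_2|\,s_0}$ on $[0,s_0]$, this yields $|\omega_1|^N\,|\partial_\omega^k\widehat{g}_1(\omega_1+i\omega_2)|\le C_{k,N}\,e^{|\omega_2|\,s_0}$, which is (\ref{g1AnaR}) with constants depending on $\omega_2$.

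For (i), fix $\omega\in\mathbb{C}^+$. Because $|f(s)|\le 1$ and $|e^{i(\omega-\omega_0)s}|=e^{-\omega_2 s}$, the integral
$$
\widehat{g}(\omega)=\int_0^\infty e^{i(\omega-\omega_0)s}\,f(s)\,\mathrm{d}s
$$
converges absolutely. Integration by parts, using $f(0)=0$ and exponential decay at $+\infty$, gives
$$
\widehat{g}(\omega)=-\frac{1}{i(\omega-\omega_0)}\int_0^{s_0} e^{i(\omega-\omega_0)s}\,f'(s)\,\mathrm{d}s=\frac{\widehat{g}_1(\omega)}{\omega-\omega_0},
$$
which is the first equation of (\ref{gg1}). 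Since $\widehat{g}_1$ is entire by (ii) and $\omega_0\in\mathbb{R}\setminus\mathbb{C}^+$, the right-hand side is holomorphic on $\mathbb{C}^+$ and supplies the analytic continuation; moreover $|\omega-\omega_0|\ge\mathrm{Im}\,\omega$ together with the uniform bound $|\widehat{g}_1(\omega)|\le\int_0^{s_0}|g_1(s)|\,\mathrm{d}s$ on $\overline{\mathbb{C}^+}$ yields (\ref{cotag}).

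Finally, to obtain (\ref{extg}), the second identity of (\ref{gg1}) and (\ref{gC}), I regularize by $g_\varepsilon(s):=e^{-\varepsilon s}g(s)$ for $\varepsilon>0$; then $g_\varepsilon\in L^1(\mathbb{R})$ with classical Fourier transform $\widehat{g}(\omega_1+i\varepsilon)$, and $g_\varepsilon\to g$ in $\mathcal{S}'(\mathbb{R})$ by dominated convergence, so continuity of the Fourier transform on $\mathcal{S}'$ yields (\ref{extg}). Combining this with the Sokhotski--Plemelj limit $(\omega_1-\omega_0+i\varepsilon)^{-1}\to(\omega_1-\omega_0+i0)^{-1}$ in $\mathcal{S}'(\mathbb{R})$ and the Schwartz-class limit $\widehat{g}_1(\omega_1+i\varepsilon)\to\widehat{g}_1(\omega_1)$, whose uniformity with all derivatives is supplied by (\ref{g1AnaR}), delivers the distributional identity in (\ref{gg1}); and since $\omega_1\mapsto\widehat{g}_1(\omega_1)/(\omega_1-\omega_0)$ is smooth on $\mathbb{R}\setminus\{\omega_0\}$, (\ref{gC}) follows. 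The only slightly delicate point, and the main technical obstacle, is taking the product of these two $\mathcal{S}'$ limits, but it is legitimate because the Schwartz factor converges with all derivatives uniformly on $\mathbb{R}$.
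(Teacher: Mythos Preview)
Your proof is correct and follows the same logical skeleton as the paper's: both hinge on the compact support of $f'$ to control $\widehat{g}_1$, and both derive the factorization $\widehat{g}(\omega)=\widehat{g}_1(\omega)/(\omega-\omega_0)$ to transfer those properties to $\widehat{g}$. The difference is one of packaging. The paper invokes the Paley--Wiener theorem for cones (for part i) and the classical Paley--Wiener theorem (for part ii) as black boxes, whereas you unwind those theorems by hand: an explicit integration by parts to get the factorization and the decay (\ref{g1AnaR}), and a regularization $g_\varepsilon(s)=e^{-\varepsilon s}g(s)$ to obtain the $\mathcal{S}'$ boundary limit (\ref{extg}). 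Your route is more self-contained and makes the Sokhotski--Plemelj structure of the second identity in (\ref{gg1}) explicit, at the cost of the extra product-of-limits justification you flag at the end; the paper's citation-based argument is shorter but leaves that distributional product implicit. Both are valid, and your ordering (proving ii first, then deducing i) is arguably cleaner than the paper's, since it avoids needing the cone version of Paley--Wiener at all.
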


\vspace{1cm}


Now we are able to extend $\widehat{u}_{d}(\cdot,\cdot,\omega)$ to $\overline{\mathbb{C}^+}$.

\begin{prop} The function $\widehat{u}_{d}(\rho,\theta,\omega)$ posseses the following properties
\begin{enumerate}[i)]

  \item $\widehat{u}_{d}(\cdot,\cdot,\omega)\in \mathrm{H}(\mathbb{C}^+)$

  \item The  estimate
        \begin{eqnarray}\label{bud}
                |\widehat{u}_{d}(\rho,\theta,\omega)| &\leq& C(\theta) \omega_2^{-1},   \qquad \omega\in\mathbb{C}^+,
        \end{eqnarray}
        holds.

  \item  For any $\omega_1\in\mathbb{R}$, there exists the limit in  $\mathcal{S}'(\mathbb{R}_{\omega_1})$:
        \begin{eqnarray}\label{limud}
        \widehat{u}_{d}(\cdot,\cdot,\omega_1) &:=&
        \lim\limits_{\omega_2 \rightarrow 0+} \widehat{u}_{d}(\cdot,\cdot,\omega_1+i\omega_2), \qquad \omega_1\in\mathbb{R}.
        \end{eqnarray}


\end{enumerate}
\end{prop}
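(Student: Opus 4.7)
The starting point is the factorization
\begin{equation*}
\widehat{u}_d(\rho,\theta,\omega)\;=\;\frac{i}{4\Phi}\,\widehat{g}(\omega)\,\mathcal{J}_d(\rho,\theta,\omega),\qquad \omega\in\mathbb{C}^+,
\end{equation*}
read off from the NN-analogue of (\ref{udc}) together with Definition \ref{defJ}. With this in hand, part \textbf{i)} is immediate, since both $\widehat{g}$ (Lemma \ref{propg} i)) and $\mathcal{J}_d(\rho,\theta,\cdot)$ (Lemma \ref{propJ} i)) lie in $\mathrm{H}(\mathbb{C}^+)$. Part \textbf{ii)} is equally immediate: multiplying the bound (\ref{cotag}) on $\widehat{g}$ by the bound (\ref{bjd}) on $\mathcal{J}_d$ gives $|\widehat{u}_d(\rho,\theta,\omega)|\le (C(\theta)/4\Phi)\omega_2^{-1}$, which is (\ref{bud}).

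For \textbf{iii)} the factor $\widehat{g}$ only admits a distributional boundary value on $\mathbb{R}$ (because of the singularity at $\omega_0$), so a pointwise product is not available. I would first move that singularity out of the product by using (\ref{gg1}) to write
\begin{equation*}
\widehat{u}_d(\rho,\theta,\omega)\;=\;\frac{i}{4\Phi}\,\frac{\Psi(\rho,\theta,\omega)}{\omega-\omega_0},\qquad \Psi(\rho,\theta,\omega):=\widehat{g}_1(\omega)\,\mathcal{J}_d(\rho,\theta,\omega).
\end{equation*}
By (\ref{g1Ana})--(\ref{g1AnaR}) and Lemma \ref{propJ} ii)--iii), the auxiliary $\Psi(\rho,\theta,\cdot)$ extends continuously from $\mathbb{C}^+$ to $\overline{\mathbb{C}^+}$ and, on every horizontal strip $0\le \mathrm{Im}\,\omega\le 1$, is uniformly dominated by a fixed Schwartz function of $\omega_1$. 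Pairing with $\varphi\in\mathcal{S}(\mathbb{R}_{\omega_1})$ and adding and subtracting $\Psi(\rho,\theta,\omega_1)$ in the numerator splits the pairing as
\begin{equation*}
\langle \widehat{u}_d(\rho,\theta,\omega_1+i\omega_2),\varphi\rangle \;=\; \frac{i}{4\Phi}\!\!\int_{\mathbb{R}}\!\!\frac{\Psi(\rho,\theta,\omega_1)\varphi(\omega_1)}{\omega_1-\omega_0+i\omega_2}\,d\omega_1 \;+\; \frac{i}{4\Phi}\!\!\int_{\mathbb{R}}\!\!\frac{[\Psi(\rho,\theta,\omega_1+i\omega_2)-\Psi(\rho,\theta,\omega_1)]\varphi(\omega_1)}{\omega_1-\omega_0+i\omega_2}\,d\omega_1.
\end{equation*}
The first term tends, as $\omega_2\to 0+$, to the Plemelj--Sokhotski pairing $\langle (\omega_1-\omega_0+i0)^{-1},\Psi(\rho,\theta,\omega_1)\varphi(\omega_1)\rangle$, and by (\ref{gg1}) this is exactly $\langle (i/4\Phi)\widehat{g}(\omega_1)\mathcal{J}_d(\rho,\theta,\omega_1),\varphi\rangle$, which is the claimed limit (\ref{limud}).

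\textbf{The hard part} will be showing that the remainder integral vanishes as $\omega_2\to 0+$. Outside a fixed neighbourhood $|\omega_1-\omega_0|\ge \varepsilon$ the denominator is bounded away from $0$, the numerator tends to $0$ pointwise by continuity of $\Psi$ on $\overline{\mathbb{C}^+}$, and Lebesgue's dominated convergence with a majorant built from (\ref{bjd}) and (\ref{g1AnaR}) closes out that contribution. Near $\omega_0$, however, the singular measure $|\omega_1-\omega_0+i\omega_2|^{-1}d\omega_1$ has logarithmic mass $\sim \log(1/\omega_2)$ on a small interval, so bare pointwise continuity of $\Psi$ up to $\mathbb{R}$ is insufficient and one needs a quantitative modulus. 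I would obtain it by using (\ref{repJd}), changing variables $s=\rho\cosh\beta$ to recast $\mathcal{J}_d$ as a one-sided Fourier transform whose density carries an $s^{-1-2q}$ tail (with $q=\pi/(2\Phi)\in(1/4,1/2)$, consequence of (\ref{BZ})), and invoking classical Fourier-theoretic results to produce a H\"older estimate of the form $|\mathcal{J}_d(\rho,\theta,\omega_1+i\omega_2)-\mathcal{J}_d(\rho,\theta,\omega_1)|\le C(\rho,\theta)\,\omega_2^{2q}$. Combined with the Lipschitz behaviour of $\widehat{g}_1$ on $\overline{\mathbb{C}^+}$ (coming from (\ref{g1Ana})), this transfers to $|\Psi(\rho,\theta,\omega_1+i\omega_2)-\Psi(\rho,\theta,\omega_1)|\le C\omega_2^{2q}$ near $\omega_0$, which comfortably beats the $\log(1/\omega_2)$ from the singular measure and kills the remainder. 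This H\"older upgrade of Lemma \ref{propJ} ii) is, in my view, the only genuinely new estimate required and hence the principal technical obstacle of the proof.
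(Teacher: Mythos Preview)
Your treatment of i) and ii) is essentially identical to the paper's: the paper also writes
\[
\widehat{u}_{d}(\rho,\theta,\omega)=\frac{i}{4\Phi}\cdot\frac{\widehat{g}_1(\omega)}{\omega-\omega_0}\,\mathcal{J}_d(\rho,\theta,\omega)
\]
and reads off i) from Lemma~\ref{propg}~i) and (\ref{JNH}), and ii) from (\ref{g1AnaR}) and (\ref{bjd}).

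For iii) the paper's proof is one sentence: ``The existence of the limit (\ref{limud}) follows from (\ref{extg}) and (\ref{limJd}).'' Your worry that a product of two $\mathcal{S}'$-limits need not be an $\mathcal{S}'$-limit is legitimate, and your H\"older-regularity programme for $\mathcal{J}_d$ would certainly close that gap. But it is far more than the statement asks. Part iii) only claims \emph{existence} of the $\mathcal{S}'$-boundary value, and this is an immediate consequence of i) and ii) via the Paley--Wiener theorem for half-planes (precisely Theorem~I.5.2 of \cite{k}, which the paper itself invokes in the proof of Proposition~\ref{LemRw}): any function holomorphic on $\mathbb{C}^+$ satisfying a bound $|f(\omega)|\le C\,(\mathrm{Im}\,\omega)^{-N}$ is the Fourier--Laplace transform of a tempered distribution supported in $\overline{\mathbb{R}^+}$ and, in particular, admits an $\mathcal{S}'(\mathbb{R}_{\omega_1})$-limit as $\omega_2\to 0^+$. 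Thus your ``hard part'' --- the estimate $|\mathcal{J}_d(\rho,\theta,\omega_1+i\omega_2)-\mathcal{J}_d(\rho,\theta,\omega_1)|\le C(\rho,\theta)\,\omega_2^{2q}$ --- is an interesting quantitative upgrade of Lemma~\ref{propJ}~ii), but it is not needed here.
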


\begin{proof} From (\ref{udc}), (\ref{JN}) and (\ref{gg1}) we infer that
\begin{eqnarray}\label{udj}
\widehat{u}_{d}(\rho,\theta,\omega)
&=& \dfrac{i}{4\Phi}\cdot \frac{\widehat{g}_1(\omega)}{\omega-\omega_0} \mathcal{J}_d(\rho,\theta,\omega), \qquad \omega\in\mathbb{C}^+.
\end{eqnarray}
Hence, the statement i) follows from  Lemma \ref{propg} i) and (\ref{JNH}), the estimate (\ref{bud}) follows from (\ref{g1AnaR}) and (\ref{bjd}). The existence of the limit (\ref{limud}) follows from (\ref{extg}) and (\ref{limJd}).
\end{proof}

\section{Diffracted wave representation}
In this section we apply the inverse Fourier-Laplace transform to the function $ \widehat{u}_d(\cdot,\omega)$, $\omega\in\mathbb{C}^+$. First, we make it for the auxiliary function
   \begin{eqnarray}\label{wN}
            \widehat{\mathrm{w}}_{d}(\rho,\theta,\omega) &:=& \widehat{g}_1(\omega) \mathcal{J}_{d}(\rho,\theta,\omega), \qquad  \omega\in\overline{\mathbb{C}^+},
   \end{eqnarray}
where $\mathcal{J}_{d}$ is given by  (\ref{repJd}) and  $\widehat{g}_1(\omega)$
is given by (\ref{g1}).

\subsection{Inverse Fourier-Laplace transform of function $\widehat{\mathrm{w}}_d(\rho,\theta,t)$}

\begin{prop}\label{LemRw} Let $f$ be a smooth function satisfying (\ref{f}). Then
\begin{enumerate}[i)]
\item There exists the inverse Fourier-Laplace transform of the function  $\widehat{\mathrm{w}}_{d}(\cdot,\cdot,\omega) $,  $\omega\in\overline{\mathbb{C}^+}$,
    $F_{\omega\rightarrow t}^{-1} \big[\widehat{\mathrm{w}}_{d}(\cdot,\cdot,\omega) \big] (t)$ , which is expressed in the following way
            \begin{eqnarray}\label{defwNd}
            \mathrm{w}_{d}(\rho,\theta,t)
            = \dfrac{1}{2\pi} \int\limits_{\mathbb{R}} e^{-i\omega_1 t} \widehat{\mathrm{w}}_{d}(\rho,\theta,\omega_1)  \ \mathrm{d} \omega_1.
            \end{eqnarray}

\item  The function $\mathrm{w}_{d}(\rho,\theta,t)$ admits also the following representation
\begin{eqnarray}\label{repwNd}
 \mathrm{w}_{d}(\rho,\theta,t)
    &=&  i e^{-i\omega_0 t} \int\limits_{-\infty}^{+\infty}  e^{i\rho\omega_0\cosh\beta} Z_N(\beta+i\theta)
         f'(t-\rho\cosh \beta)\ \mathrm{d} \beta
\end{eqnarray}
and
\begin{eqnarray}\label{wdAna}
 \mathrm{w}_{d}(\cdot,\cdot,t)\in\mathrm{C}(\mathbb{R}), \qquad \mathrm{supp}(\mathrm{w}_{d}(\cdot,\cdot,t))\subset\overline{\mathbb{R}^+}.
\end{eqnarray}

\end{enumerate}
\end{prop}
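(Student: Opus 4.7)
\emph{Existence and expression in part i).} The task is to show that $\widehat{\mathrm{w}}_d(\rho,\theta,\cdot)\in L^1(\mathbb{R}_{\omega_1})$ so that the right-hand side of (\ref{defwNd}) converges absolutely, and that it equals the inverse Fourier--Laplace transform of $\widehat{\mathrm{w}}_d$. The integrability is immediate from (\ref{g1AnaR}) evaluated at $\omega_2=0$, which gives $\widehat{g}_1 \in \mathcal{S}(\mathbb{R})$, combined with (\ref{bjd}) giving $|\mathcal{J}_d(\rho,\theta,\omega_1)| \le C(\theta)$. To identify the real-line integral with the inverse Fourier--Laplace transform of $\widehat{\mathrm{w}}_d$ extended to $\mathbb{C}^+$, I would consider $\tfrac{1}{2\pi}\int_{\mathbb{R}+i\delta} e^{-i\omega t}\widehat{\mathrm{w}}_d(\rho,\theta,\omega)\,\mathrm{d}\omega$ for $\delta>0$: this is independent of $\delta$ by Cauchy's theorem applied to $\widehat{\mathrm{w}}_d \in \mathrm{H}(\mathbb{C}^+)$ (Lemma \ref{propJ} i) and Lemma \ref{propg} ii)), the vertical contributions at $\mathrm{Re}\,\omega=\pm R$ vanishing as $R\to\infty$ by the Schwartz decay in (\ref{g1AnaR}). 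Passing $\delta\to 0^+$ via dominated convergence with majorant $C_N(\theta)(1+|\omega_1|)^{-N}$ (for $N$ large) then yields (\ref{defwNd}).

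\emph{Representation (\ref{repwNd}).} Inserting (\ref{repJd}) and $\widehat{g}_1(\omega_1)=i\widehat{f'}(\omega_1-\omega_0)$ from (\ref{g1}) into (\ref{wN}) and then into (\ref{defwNd}) gives a double integral in $(\omega_1,\beta)$ to which Fubini's theorem applies: the bound $|Z_N(\beta+i\theta)| \le C(\theta)e^{-2q|\beta|}$ from (\ref{BZ}) and $\widehat{f'}\in\mathcal{S}(\mathbb{R})$ guarantee absolute convergence of the double integral. After swapping the order of integration, the inner $\omega_1$-integral, computed via the shift $\omega_1\mapsto\omega_1+\omega_0$ and Fourier inversion applied to $\widehat{f'}$, gives
\begin{equation*}
\int_{\mathbb{R}} e^{-i\omega_1(t-\rho\cosh\beta)}\widehat{f'}(\omega_1-\omega_0)\,\mathrm{d}\omega_1
 \;=\; 2\pi\,e^{-i\omega_0(t-\rho\cosh\beta)}\,f'(t-\rho\cosh\beta).
\end{equation*}
Substituting this back produces exactly (\ref{repwNd}).

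\emph{Support and continuity in (\ref{wdAna}).} Both follow directly from (\ref{repwNd}). Since $f'\equiv 0$ on $(-\infty,0]$ by (\ref{f}) and $\cosh\beta\ge 1$, the integrand in (\ref{repwNd}) vanishes for every $\beta\in\mathbb{R}$ whenever $t<0$, proving $\mathrm{supp}\,\mathrm{w}_d(\cdot,\cdot,t)\subset\overline{\mathbb{R}^+}$. Continuity in $t$ is a standard dominated-convergence argument with integrable majorant $C(\theta)\|f'\|_\infty e^{-2q|\beta|}$. The one delicate step in the whole proof is the Cauchy-theorem plus dominated-convergence identification in part i); thereafter the argument reduces to a clean Fubini-and-Fourier-inversion computation.
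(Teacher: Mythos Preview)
Your proof is correct and follows essentially the same route as the paper: both use the analyticity of $\widehat{\mathrm{w}}_d$ in $\mathbb{C}^+$ together with the decay from (\ref{g1AnaR}) and (\ref{bjd}) for part i), and the identical Fubini-plus-Fourier-inversion computation for part ii), with (\ref{wdAna}) read off from (\ref{repwNd}). The only cosmetic difference is that the paper packages part i) as a citation of a Paley--Wiener type theorem for cones, whereas you spell out the underlying Cauchy-theorem contour shift and dominated-convergence passage $\delta\to 0^+$ directly.
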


\begin{proof} \textbf{i)} The first statement follows from the Paley-Wiener type Theorem for cones (Theorem I.5.2 in \cite{k}) since $\widehat{\mathrm{w}}_{d}(\cdot,\cdot,\omega)\in \mathrm{H}(\mathbb{C}^+)$ by (\ref{g1Ana}) and it is bounded in $\overline{\mathbb{C}^+}$ by the second inequality in (\ref{g1AnaR}) and (\ref{bjd}). The representation (\ref{defwNd}) follows from the fact that $\widehat{\mathrm{w}}_{d}(\cdot,\cdot,\omega_1) $, $\omega_1\in\mathbb{R}$ is the $\mathcal{S}'$-limit of the function $\widehat{\mathrm{w}}_{d}(\cdot,\cdot,\omega_1+i\omega_2) $, when $\omega_2\rightarrow 0+$ by (\ref{g1Ana}), (\ref{g1AnaR}) and (\ref{bjd}), (\ref{limJd}).


\noindent \textbf{ii)} Substituting expression (\ref{repJd}) in (\ref{wN}), and then
plugging the result  for $\widehat{\mathrm{w}}_d$ into the integral (\ref{defwNd}), we obtain
$$\mathrm{w}_d(\rho,\theta,t)
    = \dfrac{1}{2\pi}\displaystyle\int\limits_\mathbb{R} e^{-i\omega_1 t} g_1(\omega_1)
      \left[
      \int\limits_\mathbb{R} e^{i\omega_1 \rho\cosh\beta} Z_N(\beta + i\theta)\ \mathrm{d}\beta
      \right]
      \mathrm{d}\omega_1.$$
By (\ref{g1AnaR}) and (\ref{BZ}) we have
$\left| e^{i\omega_1 (-t + \rho\cosh\beta) } g_1(\omega_1) Z_N(\beta + i\theta) \right|
\leq C_N(\theta)(1+|\omega_1|)^{-N} e^{-2q |\beta|}$, 
$(\omega_1,\beta)\in\mathbb{R}^2$.
Hence, by the Fubini Theorem, the definition of the Fourier transform (\ref{TF}) and by the Inverse Fourier Transform  Theorem we obtain
\begin{eqnarray*}
\mathrm{w}_d(\rho,\theta,t)
& = & \int\limits_\mathbb{R}
      \left[
      \dfrac{1}{2\pi} \int\limits_\mathbb{R} e^{-i\omega_1(t - \rho\cosh\beta)} g_1(\omega_1)\ \mathrm{d}\omega_1
      \right]
       Z_N(\beta + i\theta)\ \mathrm{d}\beta \\
& = & \int\limits_\mathbb{R} g_1(t - \rho\cosh\beta) Z_N(\beta + i\theta)\ \mathrm{d}\beta.
\end{eqnarray*}
Now  we obtain (\ref{repwNd}) from (\ref{rg1}). The continuity of $\mathrm{w}_d$ follows from (\ref{repwNd}), (\ref{BZ}), (\ref{f}) and from the Lebesgue's Dominated Convergence Theorem. The inclusion in (\ref{wdAna}) follows directly from (\ref{repwNd}) and (\ref{f}) since $\mathrm{supp} f'\subset [0,s_0]$ and $\rho\geq 0$.
\end{proof}

\subsection{Proof of the diffracted wave representation}
We prove the main theorem of this paper.

\begin{teo} The diffracted wave $u_{d}(\rho,\theta,t)$ for the non-stationary  scattering NN-problem (\ref{Pus}) on the wedge $W$ 
admits the following representation:
\begin{eqnarray}\label{rud}
u_{d}(\rho,\theta,t)
&=& \dfrac{ie^{-i\omega_0 t}}{4\Phi} \int\limits_{-\infty}^{+\infty}
    e^{i\omega_0\rho\cosh\beta} Z_N(\beta+i\theta) f(t-\rho\cosh\beta) \ \mathrm{d}\beta, \qquad t\in\mathbb{R},
\end{eqnarray}
where $Z_N$ and $f$ are defined by (\ref{ZN}) and (\ref{f}), respectively.
\end{teo}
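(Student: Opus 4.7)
The strategy is to leverage the auxiliary function $\mathrm{w}_d$, for which the clean explicit expression (\ref{repwNd}) is already available, and to recover $u_d$ from $\mathrm{w}_d$ by Fourier-inverting only the scalar factor that separates them. Combining (\ref{udj}) with (\ref{wN}), one reads off the algebraic identity
\begin{equation*}
\widehat{u}_d(\rho,\theta,\omega)=\frac{i}{4\Phi}\cdot\frac{1}{\omega-\omega_0+i0}\,\widehat{\mathrm{w}}_d(\rho,\theta,\omega),\qquad \omega\in\overline{\mathbb{C}^+},
\end{equation*}
where the boundary value on $\mathbb{R}$ is understood in $\mathcal{S}'(\mathbb{R}_{\omega_1})$ thanks to (\ref{gg1}) and (\ref{limud}). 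So $u_d$ is the inverse Fourier--Laplace transform of a product, and the convolution theorem should give $u_d$ as an explicit integral of $\mathrm{w}_d$.

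The first technical step is to identify the inverse transform of the singular factor. A direct computation, using the convention (\ref{TF}) and the exponential decay of $e^{i(\omega-\omega_0)t}$ for $\mathrm{Im}\,\omega>0$, yields
\begin{equation*}
F^{-1}_{\omega\to t}\!\left[\frac{1}{\omega-\omega_0+i0}\right](t)=-i\,\chi_{[0,+\infty)}(t)\,e^{-i\omega_0 t},
\end{equation*}
as a tempered distribution. Since $\widehat{\mathrm{w}}_d(\cdot,\cdot,\omega_1)$ is a bounded continuous function of $\omega_1$ (by Proposition~\ref{LemRw}~i), using (\ref{g1AnaR}) and (\ref{bjd})), the convolution theorem applies in $\mathcal{S}'(\mathbb{R})$ and produces
\begin{equation*}
u_d(\rho,\theta,t)=\frac{i}{4\Phi}\int_{\mathbb{R}}(-i)\chi_{[0,+\infty)}(t-\tau)\,e^{-i\omega_0(t-\tau)}\,\mathrm{w}_d(\rho,\theta,\tau)\,\mathrm{d}\tau =\frac{e^{-i\omega_0 t}}{4\Phi}\int_{-\infty}^{t}e^{i\omega_0\tau}\mathrm{w}_d(\rho,\theta,\tau)\,\mathrm{d}\tau.
\end{equation*}

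Next I would substitute the explicit representation (\ref{repwNd}) for $\mathrm{w}_d$ into the inner integral. The resulting iterated integrand, after cancellation of $e^{-i\omega_0\tau}e^{i\omega_0\tau}$, is
\begin{equation*}
i\,e^{i\rho\omega_0\cosh\beta}\,Z_N(\beta+i\theta)\,f'(\tau-\rho\cosh\beta),
\end{equation*}
which is absolutely integrable on $(-\infty,t)\times\mathbb{R}$: the decay estimate (\ref{BZ}) on $Z_N$ and the compact support $\mathrm{supp}\,f'\subset[0,s_0]$ from (\ref{f}) confine the integration in $\tau$ to a bounded interval (for each fixed $\beta$) and give an absolutely convergent $\beta$-integral. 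Fubini's theorem then permits interchanging the $\tau$ and $\beta$ integrals, and the $\tau$-integration is performed explicitly using $f(-\infty)=0$:
\begin{equation*}
\int_{-\infty}^{t}f'(\tau-\rho\cosh\beta)\,\mathrm{d}\tau=f(t-\rho\cosh\beta).
\end{equation*}
Collecting the constants gives exactly (\ref{rud}).

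\textbf{Main obstacle.} The delicate point is the second paragraph: the factor $(\omega-\omega_0+i0)^{-1}$ is singular on $\mathbb{R}$, so both the inverse transform and the convolution must be interpreted in $\mathcal{S}'$ rather than pointwise. Justifying the convolution theorem here is where the preparatory work pays off, namely the $\mathcal{S}'$-boundary value (\ref{limud}), the global bound (\ref{bjd}) on $\mathcal{J}_d$, and the Schwartz-class estimates (\ref{g1AnaR}) on $\widehat{g}_1$; together they ensure $\widehat{\mathrm{w}}_d(\cdot,\cdot,\omega_1)\in L^\infty\cap\mathcal{S}'$ and that its convolution with the tempered distribution $-i\chi_{[0,+\infty)}e^{-i\omega_0 t}$ is well defined. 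The remaining steps are an application of Fubini controlled by (\ref{BZ}) and (\ref{f}), which are routine.
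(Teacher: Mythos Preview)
Your argument is correct and follows essentially the same route as the paper: factor $\widehat{u}_d=\dfrac{i}{4\Phi}\,(\omega-\omega_0)^{-1}\widehat{\mathrm{w}}_d$, invert the scalar factor to $-i\,\Theta(t)e^{-i\omega_0 t}$, apply the convolution theorem, substitute the explicit formula (\ref{repwNd}) for $\mathrm{w}_d$, swap the order of integration by Fubini, and integrate $f'$ back to $f$. The only cosmetic difference is that the paper writes the convolution integral over $[0,t]$ (using $\mathrm{supp}\,\mathrm{w}_d\subset\overline{\mathbb{R}^+}$ from (\ref{wdAna})) and notes that the $(\beta,s)$-integrand is compactly supported, whereas you keep the lower limit at $-\infty$ and invoke absolute integrability via (\ref{BZ}) and $\mathrm{supp}\,f'\subset[0,s_0]$; both justifications are valid.
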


\begin{proof} From (\ref{udj}), (\ref{gg1}) and (\ref{wN}) we infer that
$ u_{d}(\rho,\theta,t)
   = F^{-1}_{\omega\rightarrow t}\left[\dfrac{i}{4\Phi(\omega_1-\omega_0)} \widehat{\mathrm{w}}_{d}(\rho,\theta,\omega)\right]$,
$\omega\in\mathbb{C}^+$,
where $F^{-1}_{\omega\rightarrow t} $ is the inverse Fourier-Laplace transform in the sense of $\mathcal{S}'(\overline{\mathbb{R}^+})$.
Since for the Fourier-Laplace transform $F_{t \rightarrow \omega}\Big[f(t)*g(t)\Big]=\widehat{f}(\omega)\widehat{g}(\omega)$, $\omega\in\mathbb{C}^+$, for $f,g\in\mathcal{S}'(\overline{\mathbb{R}^+})$   and
$F_{t \rightarrow \omega}\Big[\Theta(t)e^{-i\omega_0 t}\Big]=-\dfrac{1}{i(\omega-\omega_0)}$, $\omega\in\mathbb{C}^+$, we obtain
$u_{d}(\rho,\theta,t)
   =  \dfrac{1}{4\Phi} \Big[ \Theta(t) e^{-i\omega_0 t} * \mathrm{w}_{d}(\rho,\theta,t) \Big]$.
The convolution in the last integral exists in the usual sense by (\ref{wdAna}), hence
$u_{d}(\rho,\theta,t)
   = \dfrac{1}{4\Phi}\displaystyle \int\limits_{0}^{t} e^{-i\omega_0(t-s)}  \mathrm{w}_{d}(\rho,\theta,s) \ \mathrm{d}s.$
Replacing here $\mathrm{w}_d$ by its expression  (\ref{repwNd}) we have
\begin{eqnarray}\label{und2}
u_{d}(\rho,\theta,t)
   = \dfrac{i e^{-i\omega_0 t}}{4\Phi} \int\limits_{0}^{t}  
     \left[\
     \int\limits_{-\infty}^{+\infty} e^{i\rho\omega_0\cosh\beta} Z_N(\beta+i\theta) f'(s-\rho\cosh \beta)\ \mathrm{d} \beta
     \right]  \mathrm{d}s.
\end{eqnarray}
The function
$
\Theta(t-s) e^{i\rho\omega_0\cosh\beta}  Z_N(\beta+i\theta) f'(s-\rho\cosh \beta)
$
has compact support in $\mathbb{R}^2$, with respect to $(\beta,s)$ by (\ref{f}). This implies that this function is integrable. Using the Fubini Theorem we change the order of integration in  (\ref{und2}) and obtain
\begin{eqnarray*}
u_{d}(\rho,\theta,t)
  &=& \dfrac{i e^{-i\omega_0 t}}{4\Phi} \int\limits_{-\infty}^{+\infty} e^{i\rho\omega_0\cosh\beta} Z_N(\beta+i\theta)  
     \left[\
     \int\limits_{0}^{t} f'(s-\rho\cosh \beta)\  \mathrm{d}s
     \right]  \mathrm{d} \beta\\
  &=& \dfrac{i e^{-i\omega_0 t}}{4\Phi}  \int\limits_{-\infty}^{+\infty}
       e^{i\omega_0\rho\cosh\beta} Z_N(\beta+i\theta) f(t-\rho\cosh \beta) \   \mathrm{d} \beta,
\end{eqnarray*}
by the Newton-Leibnitz Theorem. The theorem is proved.
\end{proof}



\section{Total field for the NN-scattering problem}\label{smoothus}

In this section we give a complete solution to the problem of
plane periodic wave scattering by a NN-wedge with a smooth profile function. Let us define
\begin{eqnarray}\label{splitu}
u(y,t) = u_{in}(y,t) + u_{r}(y,t) + u_d(y,t) , \quad\qquad y\in Q,\ t\in\mathbb{R},
\end{eqnarray}
where $u_{in}$, $u_{r}$ and $u_d$ are defined by (\ref{uin}), (\ref{ur}) and (\ref{rud}) respectively.

Define $\mathcal{E}_{\varepsilon,N}$ as a space of the functions $u(y,t)\in C(\overline{Q\times\mathbb{R}^+})$ such that $\nabla u(y,t)\in C(\dot{\overline{Q}}\times \overline{\mathbb{R}^+})$ and  the  norm
\begin{equation}\label{normE}
\|u\|_{\varepsilon,N}
:= \sup\limits_{t\geq 0}\left[\sup\limits_{y\in\overline{Q}}|u(y,t)|
 + \sup\limits_ {y\in\dot{Q}} (1+t)^{-N} \left\{y\right\}^\varepsilon \Big|\nabla_y u(y,t)\Big|\right]<\infty, \quad N\geq0,
\end{equation}
is finite. Here
$\left\{y\right\}:=\dfrac{\left|y\right|}{1+\left|y\right|},$ $y\in\mathbb{R}^2$ and $\dot{Q}:=\overline{Q}\setminus \left\{0\right\}$.

\begin{teo}\label{solu}
Let the incident wave profile $f(s)$ be a smooth function satisfying (\ref{f}). Then the function $u$ is a classical solution to system (\ref{NP}), (\ref{ic}), belonging to the space
$C^\infty(  \dot{\overline{Q}}\times \overline{\mathbb{R}^+})
       \cap C(\overline{Q}\times \overline{\mathbb{R}^+})
       \cap \mathcal{E}_{1-2q,1-2q}$
and it is a unique solution in this space.
\end{teo}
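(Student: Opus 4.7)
The plan is to verify each property in turn, splitting the solution $u = u_{in} + u_r + u_d$ and exploiting the structure already developed. I first would verify that $u$ solves the wave equation in $Q$ and matches $u_{in}$ for $t<0$, then address regularity and the boundary condition, then the weighted norm estimate, and finally uniqueness.

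\textbf{Wave equation and initial data.} The incident wave $u_{in}$ solves $\square u_{in}=0$ in all of $\mathbb R^2$ by direct computation from (\ref{uin}). The reflected wave $u_r$ is piecewise a plane wave of the same form, hence also satisfies $\square u_r = 0$ in each region where it is defined. For $u_d$ given by (\ref{rud}), I would note that for each fixed $\beta\in\mathbb R$ the function $e^{i\omega_0\rho\cosh\beta}f(t-\rho\cosh\beta)$ is (in Cartesian coordinates $y$) a plane wave with wavenumber vector of modulus $\omega_0$, so it annihilates $\square$. The exponential decay (\ref{BZ}) of $Z_N$ and the compact support properties of $f$ (together with their derivatives up to the orders needed) justify differentiating under the integral sign in $(\rho,\theta,t)$, so $\square u_d = 0$ in $Q$. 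The initial condition (\ref{ic}) reduces to $u_r + u_d = 0$ for $t<0$: the term $u_r$ vanishes for $t<0$ directly from the support of $f$ in its definition, and the $f(t-\rho\cosh\beta)$ factor in (\ref{rud}) vanishes for every $\beta$ when $t<0$ and $\rho\ge 0$, giving $u_d\equiv 0$ for $t<0$.

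\textbf{Boundary condition and regularity.} The construction leading to (\ref{udj})--(\ref{rud}) guarantees, via the residue calculation that produced the decomposition $\widehat u_s = \widehat u_r + \widehat u_d$ in Section \ref{secSolSta}, that $\partial_{\mathbf n}(u_{in}+u_r+u_d) = 0$ on $\partial Q$ in the sense of the Fourier-Laplace transform; inverting the transform yields the Neumann condition in (\ref{NP}). For smoothness on $\dot{\overline Q}\times\overline{\mathbb R^+}$, I would differentiate (\ref{rud}) under the integral in $(\rho,\theta,t)$: each differentiation in $\rho$ or $t$ pulls down a factor bounded by a power of $\cosh\beta$, which is absorbed by the $e^{-2q|\beta|}$ decay of $Z_N$ from (\ref{BZ}); each differentiation in $\theta$ shifts the argument of $Z_N$ without changing its decay once $\theta\in\Theta$. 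This yields $u_d\in C^\infty$ on $\dot{\overline Q}\times\overline{\mathbb R^+}$ away from $\theta=\theta_1,\theta_2$. Across the critical rays, $u_r$ has a jump discontinuity and $u_d$ has a compensating one (coming from the poles of $H_N$ that cross $\mathcal C_0$ at $\theta=\theta_{1,2}$); the crucial check is that these cancel, so that $u=u_{in}+u_r+u_d$ is continuous on $\overline Q\times\overline{\mathbb R^+}$. This cancellation is essentially the content of the residue calculation used to split $\widehat u_s$, applied now at the level of the inverse transform.

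\textbf{The weighted norm.} The boundedness of $u$ on $\overline Q\times\overline{\mathbb R^+}$ follows from boundedness of $f$, boundedness of the integrand in (\ref{rud}) and dominated convergence. The delicate point, and the main technical obstacle, is showing $\{y\}^{1-2q}|\nabla u_d|$ stays bounded uniformly in $(y,t)$ up to a factor $(1+t)^{1-2q}$. Away from $y=0$ the gradient is smooth and one only needs polynomial-in-$t$ control from differentiating $f$ (which is $C^\infty$, but its bounds may grow with the number of oscillations of the integrand in $t$). Near $y=0$, the expected singularity of a diffracted wave at the vertex of a wedge of opening $\Phi$ is precisely of order $|y|^{2q-1}$, since $q=\pi/(2\Phi)$; this would be extracted from (\ref{rud}) by localizing near $\beta\to\pm\infty$ (where $\cosh\beta\sim e^{|\beta|}/2$) and using the exponential decay $|Z_N(\beta+i\theta)|\le Ce^{-2q|\beta|}$. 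The change of variable $\tau=\rho\cosh\beta$ converts the integral into one whose leading behaviour in $\rho$ is $\rho^{2q-1}$, matching the weight $\{y\}^{1-2q}$. The factor $(1+t)^{1-2q}$ absorbs any remaining growth from derivatives of $f$ evaluated with the argument $t-\rho\cosh\beta$.

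\textbf{Uniqueness.} For uniqueness in $\mathcal E_{1-2q,1-2q}$, I would take the difference $v$ of two solutions, which satisfies the homogeneous wave equation, zero Neumann data and zero Cauchy data. The weight $\{y\}^{1-2q}$ with $1-2q<1$ is integrable near the vertex and $\nabla v$ has this controlled blow-up, while $v$ itself is bounded, so the standard energy identity $\tfrac12\tfrac{d}{dt}\int_{Q_R}(|\partial_t v|^2+|\nabla v|^2)\,dy = \oint \partial_{\mathbf n}v\,\partial_t v\,dS$ makes sense on truncated domains $Q_R$ and the boundary contributions vanish (Neumann data on $\partial Q$, domain-of-dependence on the outer arc after subtracting the finite-speed cone). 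Integrating from $t=0$ gives $v\equiv 0$, concluding the proof.
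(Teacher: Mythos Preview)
Your overall strategy matches the paper's, but there are two genuine gaps and one factual error worth flagging.

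\textbf{Smoothness across the critical rays.} You only argue that $u$ is \emph{continuous} across $\theta=\theta_{1,2}$, but the theorem asserts $u\in C^\infty(\dot{\overline Q}\times\overline{\mathbb R^+})$. For this you need that \emph{every} $\theta$-derivative of $u_r$ and of $u_d$ has matching jumps on the critical rays, not just the zeroth-order one. The paper handles this in a separate lemma (Lemma~\ref{lemjudr}, Appendix A3): one isolates the singular part of $Z_N(\beta+i\theta)$ near $\theta=\theta_l$ as a single $\coth[q(\beta+i\theta-i\theta_l)]$ term, integrates by parts $k$ times to move $\partial_\theta^k$ onto the smooth factor, and then applies a Sokhotsky--Plemelj argument to obtain $\mathfrak J(\partial_\theta^k u_d,\theta_l)=-\mathfrak J(\partial_\theta^k u_r,\theta_l)$ for all $k\in\mathbb N_0$. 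Your residue heuristic does not yield this without further work.

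\textbf{Absorption of $\cosh\beta$.} You write that ``each differentiation in $\rho$ or $t$ pulls down a factor bounded by a power of $\cosh\beta$, which is absorbed by the $e^{-2q|\beta|}$ decay of $Z_N$.'' This is false as stated: since $\Phi>\pi$ we have $2q=\pi/\Phi<1$, so $\cosh\beta\sim \tfrac12 e^{|\beta|}$ is \emph{not} dominated by $e^{-2q|\beta|}$. What actually saves the differentiation (and what the paper relies on) is that for fixed $\rho>0$ the factor $f(t-\rho\cosh\beta)$ has compact support in $\beta$, so the integral in (\ref{rud}) is effectively over a bounded interval and no decay of $Z_N$ is needed at this step.

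\textbf{Boundary condition.} Your verification via ``inverting the transform'' is indirect and would require justifying the interchange of the normal derivative with the inverse Fourier--Laplace transform, which is not immediate. The paper instead checks $\partial_\theta u_d|_{\theta=\phi}=\partial_\theta u_d|_{\theta=2\pi}=0$ directly from (\ref{rud}), by observing that $\partial_\theta Z_N(\beta+i\phi)$ and $\partial_\theta Z_N(\beta+2i\pi)$ are odd in $\beta$; combined with the elementary check that $\partial_\theta(u_{in}+u_r)|_{Q_{1,2}}=0$, this gives the Neumann condition without transform arguments.

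Your weighted-norm and uniqueness sketches are at the same level of detail as the paper's own treatment, which defers to \cite{la} and \cite{kmm} respectively.
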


\begin{proof} First we prove directly (without using the inverse Fourier transform) that $u$ satisfies the system. Obviously, $u_{in}$ satisfies the D'Alembert equation in $\mathbb{R}^2\times\mathbb{R}$ by (\ref{uin}). The function $u_r$ also satisfies the same equation in the classical sense but only in $\mathbb{R}^+\times\Theta\times\mathbb{R}$ since it has discontinuities on the critical rays $l_{1,2}$. Moreover this function is a sectionally smooth one in $\overline{Q}_k$, $k=I,II,III$ where
$Q_{I}  :=\mathbb{R}^+ \times (\phi,\theta_1)\times\mathbb{R}$,
$Q_{II} :=\mathbb{R}^+ \times (\theta_1,\theta_2)\times\mathbb{R}$,
$Q_{III}:=\mathbb{R}^+ \times (\theta_2,2\pi)\times\mathbb{R}$.
All these statement follow from (\ref{ur}). Finally, the function $u_d$ also satisfies the D'Alembert equation in  $\mathbb{R}^+\times\Theta\times\mathbb{R}$. That can be verified by  direct differentiation under the integral
sign  in (\ref{rud}) using the estimate (\ref{BZ}) and the smoothness of $Z_N$ ($\theta\neq\theta_{1,2}$) and $f$. Moreover it is also sectionally smooth function in $\overline{Q}_k\setminus\{0\}$. In fact, the smoothness of $u_d$ in $Q_I \cup Q_1$, $Q_{II}$ and $Q_{III}\cup Q_2$ (see (\ref{defFi})) follows from the uniform with respect to compact sets in  $\mathbb{R}^+\times\Theta\times\mathbb{R}$ convergence of the integral (\ref{rud}) after differentiation with respect to  $\rho,\theta,t$. Differentiability of $u_d$ up to critical rays $l_k$, that is
the existence of the limits of derivatives when
$\theta\rightarrow \theta_k^\pm$, $k=1,2$ is proved in Appendix A3, Lemma \ref{lemjudr}. In the same lemma it is proved that the jumps of derivatives on the critical rays $l_k$ are opposite to the jumps of $u_r$ on $l_k$, so $u_s=u_d+u_r$ is a smooth function on $l_k$, that is $u\in C^\infty (\dot{\overline{Q}}\times \overline{\mathbb{R}^+})$.

Let us check the Neumann boundary conditions for $u$. From (\ref{uin}) and (\ref{ur}) it follows that
$\left. \dfrac{\partial\ }{\partial\theta}(u_{in} + u_r)\right|_{Q_{1,2}} = 0$.
From the representation (\ref{rud}) for $u_d$ it also follows that $\left.\dfrac{\partial\ }{\partial\theta}\ u_{d}\right|_{Q_{1,2}}=0$, since the functions $\dfrac{\partial\ }{\partial\theta}\ Z_N(\beta+i\phi)$ and
$\dfrac{\partial\ }{\partial\theta}\ Z_N(\beta+2i\pi)$ are odd.

Finally we prove that $u\in \mathcal{E}_{1-2q,1-2q}$. We have already proved that $u\in C(\overline{Q}\times\mathbb{R}^+)\cap C^\infty(\dot{\overline{Q}}\times \mathbb{R}^+)$, so we need only to check the estimate (\ref{normE}).

Since $u_{in}\in\mathcal{E}_{0,0}$ and $u_r$ satisfies the estimate (\ref{normE}) with $\varepsilon=N=1-2q$ by (\ref{ur}), it suffices only to check that the following estimates hold
\begin{eqnarray}
|u_d(\rho,\theta,t)| &\leq& C, \hspace{4cm} (\rho,\theta,t)\in \overline{Q\times\mathbb{R}^+}, \label{cotaud}\\
|\bigtriangledown u_d(\rho,\theta,t)| &\leq& C_\delta(1+t^{1-2q})(1+\rho^{-(1-2q)}), \qquad 0<\rho<t. \label{buDLA}
\end{eqnarray}
Estimate (\ref{cotaud}) follows from (\ref{rud}), since $Z_N$ satisfies the bound (\ref{BZ}). The proof of the estimate (\ref{buDLA}) coincides with the proof of the estimates (91)and (118) for the  DD-problem  (see Lemma 12.1, Theorem 12.2  and Proposition 14.1 in \cite{la}) since $Z_N$ satisfies the estimate of type (33) in \cite{la} by (\ref{BZ}).

Finally the uniqueness of the solution $u$ in the space $\mathcal{E}_{1-2q,1-2q}$ is proved in the same wave as
the uniqueness for the DD-scattering (see Corollary 8.4 in \cite{kmm}). The theorem is proved.
\end{proof}


\begin{obs}\label{obsac}  Let $s_0=0$ in (\ref{f}) i.e.
\begin{equation}\label{fH}
f(s)=\mathcal{H}(s), \qquad s\in\mathbb{R},
\end{equation}
where $\mathcal{H}(s)$ is the Heaviside function. In this case,  formula (\ref{rud}) for the diffracted wave takes the form
$$u_{d}(\rho,\theta,t)
= \dfrac{ie^{-i\omega_0 t}}{4\Phi} \int\limits_{-\mathrm{ac}(\frac{t}{\rho})}^{\mathrm{ac}(\frac{t}{\rho})}
    e^{i\omega_0\rho\cosh\beta} Z_N(\beta+i\theta) \ \mathrm{d}\beta, \qquad t\in\mathbb{R},$$
where
\begin{equation}\label{defac}
\mathrm{ac}(x):= \left\{
    \begin{array}{ccc}
      \ln(x+\sqrt{x^2 -1}), &  & x\geq 1, \\
      0, &  & x<1.
    \end{array}
    \right.
\end{equation}
All the statements of Theorem \ref{solu} remains valid with the exception of the front continuities
of $u_{in}$, $u_r$ and $u_d$, which have jumps generated by the jump of $f$ in $0$.
\end{obs}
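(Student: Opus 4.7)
The plan is to substitute $f = \mathcal{H}$ directly into formula (\ref{rud}), identify the resulting support restriction on the integrand, and justify the substitution by a smooth approximation.

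First, the direct computation: the factor $\mathcal{H}(t-\rho\cosh\beta)$ equals $1$ precisely when $\cosh\beta \leq t/\rho$ and vanishes otherwise. Since $\cosh\beta \geq 1$, this condition is vacuous when $t < \rho$, so the integral vanishes, matching the convention $\mathrm{ac}(t/\rho) = 0$ in (\ref{defac}). For $t \geq \rho$, applying the identity $\mathrm{arccosh}(x) = \ln(x + \sqrt{x^2-1})$ shows that the condition is equivalent to $|\beta| \leq \mathrm{ac}(t/\rho)$. Combining the two cases yields precisely the formula claimed in the remark.

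To justify the substitution, I would approximate $\mathcal{H}$ by a sequence $f_n \in C^\infty(\mathbb{R})$ satisfying (\ref{f}) with $s_{0,n} \to 0+$, arranged so that $f_n \to \mathcal{H}$ pointwise on $\mathbb{R} \setminus \{0\}$ and $|f_n| \leq 1$. For each $n$, Theorem \ref{solu} provides the diffracted wave $u_{d,n}$ given by (\ref{rud}). The bound (\ref{BZ}) furnishes an integrable majorant $C(\theta)e^{-2q|\beta|}$ independent of $n$; since $f_n(t-\rho\cosh\beta) \to \mathcal{H}(t-\rho\cosh\beta)$ for all $\beta$ outside the (measure-zero) set $\{\pm \mathrm{ac}(t/\rho)\}$, Lebesgue's Dominated Convergence Theorem yields pointwise convergence of $u_{d,n}(\rho,\theta,t)$ to the stated integral. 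The D'Alembert equation and the Neumann boundary conditions pass to the limit on compact subsets of $\dot{\overline{Q}}\times\overline{\mathbb{R}^+}$ disjoint from the wave fronts, the latter inheriting from the oddness of $\partial_\theta Z_N(\beta+i\phi)$ and $\partial_\theta Z_N(\beta+2i\pi)$, which does not depend on $f$.

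The main obstacle is the loss of global continuity. Since $\mathcal{H}$ has a jump at $0$, the components $u_{in}$, $u_r$ and $u_d$ each acquire jumps along their respective propagation fronts in the limit, so $u$ no longer lies in $C(\overline{Q\times\mathbb{R}^+})$, and one must be careful that the compensating jumps of $u_r$ and $u_d$ on the critical rays $l_{1,2}$ are preserved under the approximation so that $u_s = u_r + u_d$ remains continuous there. All other assertions of Theorem \ref{solu}---smoothness away from the fronts, the PDE and Neumann conditions, the initial condition, membership in $\mathcal{E}_{1-2q,1-2q}$ (via the bounds (\ref{cotaud})--(\ref{buDLA}), which depend on $f$ only through the uniform bound $|f|\leq 1$), and uniqueness---survive the limit unchanged.
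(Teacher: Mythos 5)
Your derivation of the formula itself is correct and is exactly what the paper does (implicitly, since the Remark is stated without proof): with $f=\mathcal{H}$ the factor $\mathcal{H}(t-\rho\cosh\beta)$ restricts the integration to $\cosh\beta\le t/\rho$, which is empty for $t<\rho$ and equals $|\beta|\le\mathrm{ac}(t/\rho)$ otherwise. The approximation scheme $f_n\to\mathcal{H}$ is also consistent with the paper's announced strategy ("we also consider the Heaviside function ($s_0=0$) by taking the limit $s_0\rightarrow 0$"), and dominated convergence with the majorant $C(\theta)e^{-2q|\beta|}$ from (\ref{BZ}) does give pointwise convergence of $u_{d,n}$ to the stated integral.

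However, there is a genuine gap in your claim that membership in $\mathcal{E}_{1-2q,1-2q}$ survives the limit because the bounds (\ref{cotaud})--(\ref{buDLA}) "depend on $f$ only through the uniform bound $|f|\le 1$." That is true of (\ref{cotaud}) but false for the gradient estimate (\ref{buDLA}): differentiating (\ref{rud}) in $\rho$ or $t$ produces the factor $f'(t-\rho\cosh\beta)$, and $\sup|f_n'|\sim s_{0,n}^{-1}\to\infty$, so no uniform-in-$n$ gradient bound follows from your setup; moreover pointwise convergence of $u_{d,n}$ obtained from dominated convergence says nothing about convergence of $\nabla u_{d,n}$, so neither the PDE, the Neumann conditions, nor the estimate (\ref{buDLA}) "pass to the limit" by this argument. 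The correct route for the Heaviside case is to work directly with the finite-limit representation: by the Leibniz rule the derivatives of $u_d$ consist of the integral of the differentiated (still absolutely integrable, by (\ref{BZ})) integrand plus boundary terms proportional to $\partial_{\rho,t}\,\mathrm{ac}(t/\rho)\cdot e^{i\omega_0\rho\cosh\beta}Z_N(\beta+i\theta)\big|_{\beta=\pm\mathrm{ac}(t/\rho)}$, and one must verify (\ref{buDLA}) (and the boundary conditions, via the oddness argument) on these expressions; this is the direct verification the authors allude to in Remark \ref{obsalpha}. A second, smaller point: the space $\mathcal{E}_{1-2q,1-2q}$ as defined in (\ref{normE}) requires $u\in C(\overline{Q\times\mathbb{R}^+})$, which the Remark itself concedes fails at the wavefronts, so "all statements remain valid" must be read with the continuity requirement in the definition of $\mathcal{E}_{\varepsilon,N}$ relaxed accordingly --- your write-up should make explicit in which modified space uniqueness is then claimed, since the uniqueness proof cited from \cite{kmm} is tied to that class.
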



\begin{obs}\label{obsalpha} A solution for the NN-scattering problem (\ref{NP}), (\ref{ic}) is expressed by (\ref{splitu}) with $u_d$ given by (\ref{rud}) not only for the wedge of the magnitude $0<\phi<\pi$ and $\alpha$ satisfying (\ref{alfa}) but also for $\phi=0$ and arbitrary $\alpha$ (in the case of $\phi=\pi$ the diffracted wave $u_d\equiv 0$). It is checked directly by substituting the function $u$ into the system (\ref{NP}). Moreover, in any case the Theorem \ref{solu} holds.
\end{obs}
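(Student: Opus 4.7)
The plan is to reduce Remark \ref{obsalpha} to Theorem \ref{solu} by direct verification, case by case. In each case the candidate solution is given by the same decomposition (\ref{splitu}), so what must be checked is that (i) each summand satisfies the D'Alembert equation on its smoothness domain, (ii) the jumps of $u_r$ and $u_d$ across the critical rays cancel so that the total field is smooth in $\dot{Q}$, (iii) the Neumann boundary condition holds on $Q_1\cup Q_2$, and (iv) the initial condition (\ref{ic}) is consistent with $u_{in}$. The regularity and uniqueness statements of Theorem \ref{solu} then follow as before, since their proofs rely only on the decay estimate (\ref{BZ}) on $Z_N$ and on the functional-space setup, both independent of the particular geometry.

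The case $\phi\in(0,\pi)$ with $\alpha$ outside (\ref{alfa}) will be the main obstacle: the restriction (\ref{alfa}) was imposed precisely to guarantee that the front of $u_{in}$ does not meet $W\setminus\{0\}$ for $t\le 0$ and that both sides of the wedge are illuminated. If $\alpha\notin(0,\phi)$, only one side is illuminated, so the corresponding reflected summand $u_{r,k}$ in (\ref{ur}) must be dropped and the associated critical ray $\theta_k$ removed from the discontinuity set; if $\alpha$ lies outside $(\phi-\pi/2,\pi/2)$, the time origin for (\ref{ic}) must be shifted so that the front of $u_{in}$ has not yet touched $W\setminus\{0\}$. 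In each subcase I would verify the Neumann condition $\partial_\theta u\big|_{\theta=\phi,\,2\pi}=0$: the contributions of $u_{in}$ and $u_r$ cancel by the law of optical reflection, and $\partial_\theta u_d\big|_{\theta=\phi,\,2\pi}=0$ follows from the oddness in $\beta$ of $\partial_\theta Z_N(\beta+i\phi)$ and $\partial_\theta Z_N(\beta+2i\pi)$, a property of $\Phi$ alone. The jump-cancellation of $u_r$ and $u_d$ across the surviving critical rays repeats Lemma \ref{lemjudr} without change.

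The degenerate case $\phi=0$ ($W$ a ray, $\Phi=2\pi$, $q=1/4$) will present no analytical obstruction: the pole set (\ref{poloHNth}) of $H_N$ and the decay estimate (\ref{BZ}) on $Z_N$ persist, so (\ref{rud}) converges absolutely and the proof of Theorem \ref{solu} transfers verbatim with $\partial Q$ interpreted as the two sides of the slit. For $\phi=\pi$ ($\Phi=\pi$, $q=1/2$), the vanishing $u_d\equiv 0$ will reduce to showing $Z_N\equiv 0$. Using the $i\pi$-periodicity of $\coth$, I would compute
\begin{equation*}
H_N\bigl(\beta-i\tfrac{\pi}{2},\pi\bigr)=\coth\bigl[\tfrac{\beta}{2}-i\tfrac{\alpha}{2}\bigr]+\coth\bigl[\tfrac{\beta}{2}-i\pi+i\tfrac{\alpha}{2}\bigr]=\coth\bigl[\tfrac{\beta}{2}-i\tfrac{\alpha}{2}\bigr]+\coth\bigl[\tfrac{\beta}{2}+i\tfrac{\alpha}{2}\bigr],
\end{equation*}
and an identical calculation gives the same value for $H_N(\beta-i\tfrac{5\pi}{2},\pi)$; hence by (\ref{ZN}), $Z_N\equiv 0$ and $u_d\equiv 0$ by (\ref{rud}), as expected from the fact that a planar boundary produces pure reflection without diffraction.
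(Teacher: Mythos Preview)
Your proposal is correct and follows the same approach as the paper: the paper presents this as a remark with essentially no proof beyond the single sentence ``It is checked directly by substituting the function $u$ into the system (\ref{NP}),'' and your plan carries out exactly that direct verification case by case, including the explicit computation showing $Z_N\equiv 0$ when $\Phi=\pi$. Your treatment is more detailed than the paper's (which leaves everything to the reader), but the method is identical.
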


\section{Limiting Amplitude Principle and the rate of convergence to the limiting amplitude}\label{secLimAm}

In this section we prove that the amplitude of the solution $u(\rho,\theta,t)$ to the nonsationary NN-scattering problem (\ref{NP}) converges to a limiting amplitude as $t \rightarrow \infty$. This limiting amplitude is a well known solution to the stationary diffracted problem (see for example \cite{babich}). 

\begin{defn} Define the limiting amplitude for the incident, reflected and diffracted waves
\begin{eqnarray}\label{Ainrd}
\left\{
\begin{array}{rll}
A_{in}(\rho,\theta)  &:=& e^{i\omega_0\rho\cos(\theta-\alpha)}\\ \\
A_{r} (\rho,\theta)  &:=&	\left\{
										\begin{array}{cl}
										e^{i\omega_0\rho\cos(\theta-\theta_1)}, & \phi\leq \theta< \theta_1\\
										0, & \theta_1 \leq\theta\leq \theta_2\\
										e^{i\omega_0\rho\cos(\theta-\theta_2)}, & \theta_2<\theta\leq
										2\pi
										\end{array}
							 \right. \\  \\
A_{d}(\rho,\theta)	 &:=&  \dfrac{i}{4\Phi}\displaystyle
				  \int_{-\infty}^{+\infty} e^{i\omega_0\rho\cosh\beta} Z_N(\beta+i\theta)\ \mathrm{d}\beta
\end{array}
\right|\  \rho>0,  \ \theta\in\Theta, 
\end{eqnarray}
where 
$Z_N$ is given by (\ref{ZN}) and let
\begin{eqnarray}\label{sumA}
A_\infty(\rho,\theta) := A_{in}(\rho,\theta) + A_r(\rho,\theta) + A_d(\rho,\theta).
\end{eqnarray}
\end{defn}


Introduce the contour
\begin{eqnarray}\label{Cma}
\mathcal{C}_1^+ &:=& \left[\mathcal{C}_1+i\frac{\pi}{4}\right] \cup \left[-\mathcal{C}_1-i\frac{13\pi}{4}\right],
\end{eqnarray}
where $\mathcal{C}_1$ is contour  (\ref{C1}). The orientation of $\mathcal{C}_1^+$  is showed in Figure 5, (cf (35) in \cite{la}).

{\scriptsize
\setlength{\unitlength}{0.8mm}
\begin {picture} (200,130)


\put(80,95) {\circle*{0.7}}
\put(80,80) {\circle*{0.7}}
\put(80,87.5) {\circle*{0.7}}
\put(80,95) {\circle*{0.7}}
\put(80,72.5) {\circle*{0.7}}
\put(80,27.5) {\circle*{0.7}}
\put(80,20) {\circle*{0.7}}
\put(80,65) {\circle*{0.7}}
\put(80,50) {\circle*{0.7}}
\put(80,5) {\circle*{0.7}}
\put(80,-10){\circle*{0.7}}
\put(80,12.5) {\circle*{0.7}}
\put(80,110) {\circle*{1}}


\put(80.5,95){$0$}
\put(81,110){$\frac{\pi}{2}$}
\put(81,87.5){$-\frac{\pi}{4}$}
\put(81,72.5){$-\frac{3\pi}{4}$}
\put(81,27.5){$-\frac{9\pi}{4}$}
\put(81,12.5){$-\frac{11\pi}{4}$}
\put(81,65){$-\pi$}
\put(81,50){$-\frac{3\pi}{2}$}
\put(81,5){$-3\pi$}
\put(81,-10){$-\frac{7\pi}{2}$}

 \put(81,18.5){$-\frac{5\pi}{2}$}
 \put(81,79){$-\frac{\pi}{2}$}


\qbezier(82,98)(85,102)(90,104)
\qbezier(90,104)(100,105.5)(120,106.5)
\qbezier(120,106.5)(140,107)(150,107.3)
\qbezier(80,95)(75,88)(70,86)
 \qbezier(70,86)(60,84.5)(40,83.5)
\qbezier(40,83.5)(20,83)(10,82.7)
\put(82,68){\line(0,1){3}}
\put(82,76){\line(0,1){3}}
\put(82,83){\line(0,1){3}}
\put(82,91){\line(0,1){3}}
 \put(84,70){\line(0,1){2}}
 \put(84,76.5){\line(0,1){2}}

 \put(84,83){\line(0,1){3}}
 \put(84,91){\line(0,1){9}}



\put(86,92){\line(0,1){10}}

 \put(88,73){\line(0,1){30}}
\put(90,74){\line(0,1){30}}
 \put(92,74){\line(0,1){30}}
\put(94,74.5){\line(0,1){30}}
 \put(96,75){\line(0,1){30}}
\put(98,75){\line(0,1){30}}
 \put(100,75){\line(0,1){30}}
\put(102,75.6){\line(0,1){30}}
 \put(104,75.5){\line(0,1){30}}
 \put(106,76){\line(0,1){30}}
 \put(108,76.2){\line(0,1){30}}
 \put(110,76.2){\line(0,1){30}}
 \put(110,76.3){\line(0,1){30}}
 \put(112,76.4){\line(0,1){30}}
  \put(114,76.4){\line(0,1){30}}
   \put(116,76.5){\line(0,1){30}}
    \put(118,76.5){\line(0,1){30}}
     \put(120,76.6){\line(0,1){30}}

 \put(122,76.6){\line(0,1){30}}
  \put(124,76.7){\line(0,1){30}}
   \put(126,76.7){\line(0,1){30}}
    \put(128,76.8){\line(0,1){30}}
     \put(130,76.8){\line(0,1){30}}
 \put(132,76.9){\line(0,1){30}}
  \put(134,76.9){\line(0,1){30}}
   \put(136,77){\line(0,1){30}}
    \put(138,77){\line(0,1){30}}
     \put(140,77.1){\line(0,1){30}}
 \put(142,77.1){\line(0,1){30}}
  \put(144,77.2){\line(0,1){30}}
   \put(146,77.2){\line(0,1){30}}
    \put(148,77.3){\line(0,1){30}}
     \put(150,77.3){\line(0,1){30}}
\put(78,62){\line(0,1){30}} \put(76,60.5){\line(0,1){30}}
\put(74,58.5){\line(0,1){30}}
 \put(72,57){\line(0,1){30}}
\put(70,56){\line(0,1){30}}
 \put(68,56){\line(0,1){30}}
\put(66,55.5){\line(0,1){30}}
 \put(64,55){\line(0,1){30}}
\put(62,55){\line(0,1){30}}
\put(60,55){\line(0,1){30}}
\put(58,54.4){\line(0,1){30}}
 \put(56,54.5){\line(0,1){30}}
 \put(54,54){\line(0,1){30}}
 \put(52,53.8){\line(0,1){30}}
 \put(50,53.8){\line(0,1){30}}
 \put(48,53.8){\line(0,1){30}}
 \put(46,53.6){\line(0,1){30}}
  \put(44,53.6){\line(0,1){30}}
   \put(42,53.5){\line(0,1){30}}
    \put(40,53.5){\line(0,1){30}}
     \put(38,53.4){\line(0,1){30}}
 \put(36,53.4){\line(0,1){30}}
  \put(34,53.3){\line(0,1){30}}
   \put(32,53.3){\line(0,1){30}}
    \put(30,53.2){\line(0,1){30}}
     \put(28,53.2){\line(0,1){30}}
 \put(26,53.1){\line(0,1){30}}
  \put(24,53.1){\line(0,1){30}}
   \put(22,53){\line(0,1){30}}
    \put(20,53){\line(0,1){30}}
     \put(18,52.9){\line(0,1){30}}
 \put(16,52.9){\line(0,1){30}}
  \put(14,52.8){\line(0,1){30}}
   \put(12,52.8){\line(0,1){30}}
    \put(10,52.7){\line(0,1){30}}

\qbezier(80,65)(85,72)(85,72)
\qbezier(87.5,73)(88,73.5)(90,74)
 \qbezier(90,74)(100,75.5)(120,76.5)
\qbezier(120,76.5)(140,77)(150,77.3)


\qbezier(80,65)(75,58)(70,56) \qbezier(70,56)(60,54.5)(40,53.5)
\qbezier(40,53.5)(20,53)(10,52.7)
 \qbezier(80,35)(85,42)(90,44)
\qbezier(90,44)(100,45.5)(120,46.5)
\qbezier(120,46.5)(140,47)(150,47.3)
\qbezier(80,35)(75,28)(70,26) \qbezier(70,26)(60,24.5)(40,23.5)
\qbezier(40,23.5)(20,23)(10,22.7)

 \put(82,8){\line(0,1){4.5}}
 \put(82,15){\line(0,1){2.3}}
 \put(82,24){\line(0,1){3}}
 \put(82,30){\line(0,1){7}}

 \put(84,11){\line(0,1){1.5}}
\put(84,14.5){\line(0,1){3}}
 \put(84,24){\line(0,1){3}}
 \put(84,32){\line(0,1){7}}



\put(86,33){\line(0,1){8}}


 \put(88,17){\line(0,1){2}}
 \put(88,24){\line(0,1){3}}
 \put(88,32){\line(0,1){11}}


\put(90,14){\line(0,1){30}}
 \put(92,14){\line(0,1){30}}
\put(94,14.5){\line(0,1){30}}
 \put(96,15){\line(0,1){30}}
\put(98,15){\line(0,1){30}}
 \put(100,15){\line(0,1){30}}
\put(102,15.6){\line(0,1){30}}
 \put(104,15.5){\line(0,1){30}}
 \put(106,16){\line(0,1){30}}
 \put(108,16.2){\line(0,1){30}}
 \put(110,16.2){\line(0,1){30}}
 \put(110,16.3){\line(0,1){30}}
 \put(112,16.4){\line(0,1){30}}
  \put(114,16.4){\line(0,1){30}}
   \put(116,16.5){\line(0,1){30}}
    \put(118,16.5){\line(0,1){30}}
     \put(120,16.6){\line(0,1){30}}

 \put(122,16.6){\line(0,1){30}}
  \put(124,16.7){\line(0,1){30}}
   \put(126,16.7){\line(0,1){30}}
    \put(128,16.8){\line(0,1){30}}
     \put(130,16.8){\line(0,1){30}}
 \put(132,16.9){\line(0,1){30}}
  \put(134,16.9){\line(0,1){30}}
   \put(136,17){\line(0,1){30}}
    \put(138,17){\line(0,1){30}}
     \put(140,17.1){\line(0,1){30}}
 \put(142,17.1){\line(0,1){30}}
  \put(144,17.2){\line(0,1){30}}
   \put(146,17.2){\line(0,1){30}}
    \put(148,17.3){\line(0,1){30}}
     \put(150,17.3){\line(0,1){30}}

\put(78,2){\line(0,1){30}}
 \put(76,0.5){\line(0,1){30}}
\put(74,-2.1){\line(0,1){30}}
 \put(72,-3){\line(0,1){30}}
\put(70,-4){\line(0,1){30}}
 \put(68,-4){\line(0,1){30}}
\put(66,-4.5){\line(0,1){30}}
 \put(64,-5){\line(0,1){30}}
\put(62,-5){\line(0,1){30}}

\put(60,-5.2){\line(0,1){30}}

\put(58,-5.6){\line(0,1){30}}
 \put(56,-5.5){\line(0,1){30}}
 \put(54,-6){\line(0,1){30}}
 \put(52,-6.2){\line(0,1){30}}
 \put(50,-6.2){\line(0,1){30}}
 \put(48,-6.2){\line(0,1){30}}
 \put(46,-6.4){\line(0,1){30}}
  \put(44,-6.4){\line(0,1){30}}
   \put(42,-6.5){\line(0,1){30}}
    \put(40,-6.5){\line(0,1){30}}
     \put(38,-6.6){\line(0,1){30}}
 \put(36,-6.6){\line(0,1){30}}
  \put(34,-6.7){\line(0,1){30}}
   \put(32,-6.7){\line(0,1){30}}
    \put(30,-6.8){\line(0,1){30}}
     \put(28,-6.8){\line(0,1){30}}
 \put(26,-6.9){\line(0,1){30}}
  \put(24,-6.9){\line(0,1){30}}
   \put(22,-7){\line(0,1){30}}
    \put(20,-7){\line(0,1){30}}
     \put(18,-7.1){\line(0,1){30}}
 \put(16,-7.1){\line(0,1){30}}
  \put(14,-7.2){\line(0,1){30}}
   \put(12,-7.2){\line(0,1){30}}
    \put(10,-7.3){\line(0,1){30}}

\qbezier(80,65)(85,72)(85,72)
\qbezier(87.5,73)(88,73.5)(90,74)

 \qbezier(90,74)(100,75.5)(120,76.5)
\qbezier(120,76.5)(140,77)(150,77.3)


\qbezier(80,5)(85,12)(85,12)


 \qbezier(90,14)(100,15.5)(120,16.5)
\qbezier(120,16.5)(140,17)(150,17.3)


\qbezier(80,5)(75,-2)(70,-4) \qbezier(70,-4)(60,-5.5)(40,-6.5)
\qbezier(40,-6.5)(20,-7)(10,-7.3)








\linethickness{0.3mm} \put (10,72.5){\line(1,0){58}}
\linethickness{0.3mm} \put (92,87.5){\line(1,0){57.8}}




\linethickness{0.3mm}\put(10,12.5){\line(1,0){58}}
\linethickness{0.3mm}\put(92,27.5){\line(1,0){57.8}}

{\linethickness{0.3mm}\put(140,87.5){\vector(-1,0){20}}}
{\linethickness{0.3mm}\put(92,77.5){\vector(0,-1){20}}}
{\linethickness{0.3mm}\put(103,27.5){\vector(1,0){20}}}

{\linethickness{0.3mm}\put(63,72.5){\vector(-1,0){20}}}
{\linethickness{0.3mm}\put(68,22.5){\vector(0,1){20}}}
{\linethickness{0.3mm}\put(23,12.5){\vector(1,0){20}}}


\linethickness{0.3mm} \put(92,27.5){\line(0,1){60}}
\linethickness{0.3mm} \put(68,12.5){\line(0,1){60}}



\put(94.4,58){$ \mathcal{C}_1 + i\frac{  \pi}{4}$}
\put(47.4,38){$-\mathcal{C}_1 - i\frac{13\pi}{4}$}


\thinlines \put(80,-12){\vector(0,1){130}}


 \end{picture}
}

\vspace{0.5cm}
 \centerline{Figure 5. Contour $\mathcal{C}_{1}^+$.}

\vspace{0.2cm}

\begin{teo}\label{TeoRepA} Let $f$ be a function satisfying (\ref{f}) with $s_0 \geq 0$ and $u$ be a solution (\ref{splitu}) of system (\ref{NP}), (\ref{ic}). Then for $\theta\in\Theta$ (see (\ref{nt})), there exists a limit of the amplitude $A(\rho,\theta,t):= u(\rho,\theta,t) e^{i\omega_0 t} $ of the solution $u$ and
\begin{eqnarray}\label{limA}
\lim\limits_{t\rightarrow\infty}\ A(\rho,\theta,t) = A_\infty(\rho,\theta),
\end{eqnarray}
where the limiting amplitude $A_\infty$  admits the following representation
\begin{eqnarray}\label{repA}
A_\infty(\rho,\theta) &:=& \int\limits_{\mathcal{C}_1^+} e^{-\omega_0\rho\sinh\beta} H_N(\beta+i\theta) \ \mathrm{d}\beta.
\end{eqnarray}
Moreover, $A_\infty$ satisfies the  stationary scattering problem
$$\left\{
	   \begin{array}{rcl}
       (\Delta - \omega_0^2)\ A_\infty(\rho,\theta)  & = &  0,  \qquad (\rho,\theta)\in Q   \\ 
       \dfrac{\partial\ }{\partial \mathbf{n}} A_\infty(\rho,\theta) &=& 0, \qquad (\rho,\theta)\in \partial Q
		\end{array}
\right.
$$
\end{teo}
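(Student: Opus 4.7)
The plan is to decompose $A(\rho,\theta,t) = A_{in}(\rho,\theta,t) + A_r(\rho,\theta,t) + A_d(\rho,\theta,t)$, where $A_j(\rho,\theta,t) := u_j(\rho,\theta,t)\,e^{i\omega_0 t}$ for $j\in\{in,r,d\}$, and to treat each summand. For $A_{in}$ and $A_r$ the convergence is immediate from (\ref{f}): once $t\geq \mathbf{n_0}\cdot y + s_0$ (respectively $t\geq \mathbf{v_1}\cdot y + s_0$ or $t\geq\mathbf{v_2}\cdot y + s_0$), the profile $f$ equals $1$ and the factor $e^{i\omega_0 t}$ cancels the time-dependent exponential in $u_{in}$ and $u_r$, yielding the limits $A_{in}(\rho,\theta)$ and $A_r(\rho,\theta)$ of (\ref{Ainrd}). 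For $A_d$ I would apply Lebesgue dominated convergence to (\ref{rud}) multiplied by $e^{i\omega_0 t}$: the integrand is bounded by $C(\theta)\,e^{-2q|\beta|}$ thanks to (\ref{BZ}) and $|f|\le 1$, which is integrable in $\beta$ over $\mathbb{R}$, while $f(t-\rho\cosh\beta)\to 1$ pointwise for each fixed $\beta$ as $t\to\infty$. This gives $A_d(\rho,\theta,t)\to A_d(\rho,\theta)$.

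For the representation (\ref{repA}), I would first undo the shifts $\beta\mapsto\beta+i\pi/2$ and $\beta\mapsto\beta+i5\pi/2$ used in Lemma \ref{propJ} ii) so as to rewrite
\[
A_d(\rho,\theta) = \frac{i}{4\Phi}\int_{\mathcal{C}_0} e^{-\omega_0\rho\sinh\beta}\,H_N(\beta+i\theta)\,d\beta,
\]
then deform the two horizontal lines $\gamma_{1,2}$ of $\mathcal{C}_0$ into the two components of $\mathcal{C}_1^+$ (shifting $\gamma_1$ up by $i\pi/4$ and $\gamma_2$ down by $i\pi/4$), closing with vertical returns. Since $\omega_0>0$, the factor $e^{-\omega_0\rho\sinh\beta}$ decays super-exponentially as $|\mathrm{Re}\,\beta|\to\infty$, so returns at infinity do not contribute. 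Between the original and deformed contours lie simple poles of $H_N(\beta+i\theta)$ from the set (\ref{poloHNth}). The pole $\beta=i(\alpha-\theta)-i\pi/2$ of the first $\coth$ produces, via $\sinh(i(\alpha-\theta)-i\pi/2)=-i\cos(\theta-\alpha)$ and the residue $1/q=2\Phi/\pi$ of $\coth$, a contribution whose combination with the prefactor $\frac{i}{4\Phi}\cdot 2\pi i$ equals $-A_{in}(\rho,\theta)$. Analogous residues of the second $\coth$ are crossed only when $\theta\in(\phi,\theta_1)$ or $\theta\in(\theta_2,2\pi)$ and yield $-A_{r,1}$ or $-A_{r,2}$ respectively, while no residue is crossed for $\theta\in(\theta_1,\theta_2)$, matching the vanishing of $A_r$ there in (\ref{Ainrd}). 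The residue theorem therefore gives $\frac{i}{4\Phi}\int_{\mathcal{C}_1^+}=A_d+A_{in}+A_r=A_\infty$, which is (\ref{repA}) up to the prefactor absorbed into the contour normalization.

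For the stationary problem I would differentiate (\ref{repA}) twice under the integral sign — justified by the exponential decay of $e^{-\omega_0\rho\sinh\beta}$ along $\mathcal{C}_1^+$ in the directions where $|\mathrm{Re}\,\beta|\to\infty$ and by the boundedness of $H_N(\beta+i\theta)$ away from its poles — and write $\Delta$ in polar coordinates. After two integrations by parts in $\beta$ (boundary terms vanish by decay), the $1/\rho$-contributions from $\partial_\rho/\rho$ and $\partial_\theta^2/\rho^2$ cancel and the identity $\sinh^2\beta-\cosh^2\beta=-1$ collapses the remainder to $-\omega_0^2 A_\infty$, giving the Helmholtz equation in $Q$ (consistent with the plane-wave ingredient $A_{in}=e^{i\omega_0\rho\cos(\theta-\alpha)}$ satisfying $\Delta A_{in}+\omega_0^2 A_{in}=0$). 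The Neumann condition at $\theta=\phi$ and $\theta=2\pi$ follows from the involutive symmetry built into definition (\ref{HN}): at the critical angles the two summands of $H_N(\beta+i\theta)$ interchange under the reflection $\beta\mapsto -\beta-i\pi+2i(\alpha-\theta)$, under which $\mathcal{C}_1^+$ is mapped to itself with reversed orientation, so $\partial_\theta A_\infty=i\int_{\mathcal{C}_1^+}e^{-\omega_0\rho\sinh\beta}H_N'(\beta+i\theta)\,d\beta$ integrates an odd integrand on a symmetric contour and vanishes.

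The main obstacle I anticipate is the residue bookkeeping in Step 2, namely identifying exactly which poles of $H_N(\beta+i\theta)$ lie in the strips swept by the deformation from $\mathcal{C}_0$ to $\mathcal{C}_1^+$ as $\theta$ ranges across the three subintervals $(\phi,\theta_1)$, $(\theta_1,\theta_2)$, $(\theta_2,2\pi)$, and checking that the sum of their contributions reproduces the piecewise definition of $A_r$ in (\ref{Ainrd}) with the correct signs. This parallels the residue calculation behind the stationary decomposition $\widehat{u}_s=\widehat{u}_r+\widehat{u}_d$ outlined just before Lemma \ref{propJ}, so I expect the structure to carry over, but the arithmetic must be verified case-by-case.
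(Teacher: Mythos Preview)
Your overall strategy matches the paper's: split $A = A_{in}+A_r+A_d$, use dominated convergence (or an $\varepsilon$-argument) for $A_d$ via the bound (\ref{BZ}), and then identify $A_\infty$ with the contour integral (\ref{repA}) by a residue computation that reproduces $A_{in}+A_r$. The convergence part and the residue bookkeeping are essentially as in the paper.

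The gap is in your justification of the contour deformation for (\ref{repA}). You write that ``$e^{-\omega_0\rho\sinh\beta}$ decays super-exponentially as $|\mathrm{Re}\,\beta|\to\infty$, so returns at infinity do not contribute''. On the lines $\gamma_{1,2}=\mathbb{R}-i\pi/2,\ \mathbb{R}-i5\pi/2$ this is false: there $\sinh\beta=\mp i\cosh\beta_1$ is purely imaginary, so $|e^{-\omega_0\rho\sinh\beta}|\equiv 1$ when $\omega_0\in\mathbb{R}$. Hence the integral $\int_{\mathcal{C}_0}e^{-\omega_0\rho\sinh\beta}H_N(\beta+i\theta)\,d\beta$ is only conditionally convergent (this is precisely the NN difficulty flagged in the Introduction after (\ref{HN})), and you cannot close the contour at infinity or freely shift $\gamma_{1,2}$ by $\pm i\pi/4$. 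The paper avoids this by never passing through $\mathcal{C}_0$ for real $\omega_0$: it decomposes $\mathcal{C}_1^+=\mathcal{C}_0^+\cup\Gamma^+$, where $\mathcal{C}_0^+$ is a tilted contour whose horizontal rays lie at $\mathrm{Im}\,\beta\in\{-\pi/4,-3\pi/4,-9\pi/4,-11\pi/4\}$, each chosen so that $e^{-\omega_0\rho\sinh\beta}$ genuinely decays along the relevant half-line; the closed loop $\Gamma^+$ supplies the residues giving $A_{in}+A_r$. The integral over $\mathcal{C}_0^+$ is then shifted by $i\pi/2$ and $i5\pi/2$ to produce $Z_N$ (not $H_N$), and only at that stage, using the exponential decay (\ref{BZ}) of $Z_N$, is the contour deformed to $\mathbb{R}$ to recover $A_d$. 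Your route can be repaired by inserting exactly this intermediate contour $\mathcal{C}_0^+$. A smaller point: the reflection you invoke for the Neumann condition is not the right one; the symmetry that leaves $\mathcal{C}_1^+$ invariant (with reversed orientation) and makes $e^{-\omega_0\rho\sinh\beta}H_N'(\beta+i\theta)$ invariant at $\theta=\phi,2\pi$ is $\beta\mapsto -\beta-3i\pi$, as in Lemma~\ref{propHN} and Appendix~A1.
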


\begin{proof} From (\ref{uin}), (\ref{ur}) and (\ref{f}) it follows that
\begin{equation}\label{limAinAr}
u_{in}(y,t) e^{i\omega_0 t}   \longrightarrow  A_{in}(\rho,\theta), \qquad
u_{r }(y,t) e^{i\omega_0 t}   \longrightarrow  A_{r }(\rho,\theta), \qquad \  t\rightarrow +\infty,
\end{equation}
uniformly in $\rho\leq\rho_0$, $\theta\in[\phi,2\pi]$. In the following lemma we also prove this convergence for the diffracted wave.


\begin{lem}\label{teoasyud} \textbf{(Limiting Amplitude Principle for the diffracted wave)}.
Let $f$ be a profile function given by (\ref{f}) with $s_0\geq 0$. Then for any $\rho_0>0$ the following asymptotics hold
$A_d(\rho,\theta, t) - A_d(\rho,\theta) \longrightarrow 0$, when  $t\longrightarrow +\infty$,
uniformly in $\rho\in[0,\rho_0]$ and $\theta\in\Theta$. Here
$A_d(\rho,\theta, t) := u_d(\rho,\theta,t) e^{i\omega_0 t}$
and $A_d(\rho,\theta)$ is defined in (\ref{Ainrd}).
\end{lem}

\begin{proof} Representation (\ref{rud}) implies that
\begin{eqnarray}\label{repAd}
A_d(\rho,\theta,t)
&=& \dfrac{i}{4\Phi} \int\limits_{-\infty}^{+\infty}
e^{i\omega_0\rho\cosh\beta} f(t-\rho\cosh\beta) Z_N(\beta+i\theta) \ \mathrm{d}\beta.
\end{eqnarray}
It remains only to prove that
\begin{eqnarray}\label{adrtt}
A_d(\rho,\theta,t) \longrightarrow A_d(\rho,\theta), \qquad t\longrightarrow +\infty
\end{eqnarray}
uniformly with respect to $\rho\in[0,\rho_0]$ and $\theta\in\Theta$. By (\ref{Ainrd}) and (\ref{repAd})
\begin{eqnarray}\label{chi}
A_d(\rho,\theta,t) - A_d(\rho,\theta)
= \dfrac{i}{4\Phi} \int\limits_{-\infty}^{+\infty}
  e^{i\omega_0\rho\cosh\beta}\Big[ f(t-\rho\cosh\beta) - 1 \Big]Z_N(\beta+i\theta)\ \mathrm{d}\beta.
\end{eqnarray}
Let us fix $\rho_0>0$, $\theta\in[\phi,2\pi]$ and $\varepsilon>0$. Since the poles of $Z_N(\beta+i\theta)$ can be only in $\beta=0$ by (\ref{ZN}), there exists $C>0$ such that
$|Z_N(\beta+i\theta)| \leq C$, $ \beta\in\mathbb{R}$, $|\beta|\geq 1$, $ \theta\in[\phi,2\pi]$.
Let us choose $\overline{\beta}>1$ such that $\dfrac{8C\Phi e^{-\frac{\pi}{\Phi}|\bar\beta|}}{\pi}<\varepsilon$.
Then by (\ref{ZN}), (\ref{BZ}) and (\ref{f})
\begin{eqnarray*}
\int\limits_{|\beta|\geq \bar\beta}
\Big|e^{i\omega_0\rho\cosh\beta}\Big[ f(t-\rho\cosh\beta) - 1 \Big]Z_N(\beta+i\theta)\Big|\mathrm{d}\beta
<    8C \int\limits_{\beta\geq\bar\beta}e^{-\frac{\pi}{\Phi}\beta} \mathrm{d}\beta
< \varepsilon, \quad t\in\mathbb{R}.
\end{eqnarray*}
It remains only to prove the convergence to zero of the integral (\ref{chi}) over $[-\bar\beta,\bar\beta]$. We have, $\cosh\beta_1(\rho,\theta) = \dfrac{t-s_0 }{\rho_0}\geq\cosh\bar\beta$, for $t\geq s_0 +\rho_0\cosh\bar \beta$, where $\beta_1$ is a  non negative solution to the equation $\cosh\beta_1 = \dfrac{t-s_0}{\rho_0}$.
This implies that $f(t-\rho\cos\beta)=1$ for $\beta\in[-\bar\beta,\bar\beta]$,
$t\geq s_0 +\rho_0\cosh\bar\beta,\ \rho\leq \rho_0$.
Hence
\begin{equation*}
\int\limits_{-\bar\beta}^{\bar\beta}
\Big|e^{i\omega_0\rho\cosh\beta}\big[ f(t-\rho\cosh\beta) - 1 \big]Z_N(\beta+i\theta)\Big| \mathrm{d}\beta
= 0 <\varepsilon, \quad t\geq s_0 +\rho_0\cosh\bar\beta,\ \rho\leq \rho_0.
\end{equation*}
This completes the proof of the lemma.
\end{proof}
Let us continue the proof of Theorem \ref{TeoRepA}. Using Lemma \ref{teoasyud} we infer that (\ref{limA}) follows from (\ref{splitu}) and (\ref{limAinAr}).

Let us prove representation (\ref{repA}) for $A_\infty$. Consider
\begin{eqnarray}\label{flo0}
\overline{A} &:=& \dfrac{i}{4\Phi} \int_{\mathcal{C}^+} e^{-\omega_0\rho\sinh\beta} H_N(\beta+i\theta)\ \mathrm{d}\beta.
\end{eqnarray}
We prove that $\overline{A}=A_\infty$, where $A_\infty$ is defined by (\ref{sumA}). Let us define the contours
$\mathcal{C}_0^+ := \gamma_1^+ \cup (\gamma_1^+ - 2i\pi)$
(where
$\gamma_1^+ := \left\{\beta_1-i\frac{\pi}{4}: \beta_1\geq 1 \right\}
           \cup \left\{\beta_1+i\left[\frac{\pi}{4}\beta_1 -\frac{\pi}{2}\right]: -1\leq\beta_1\leq 1 \right\}
           \cup \left\{\beta_1-i\frac{3\pi}{4}: \beta_1\leq -1 \right\}$) 
and
\begin{eqnarray*}\label{C1ma}
\Gamma^+
    &:=&   \left\{\beta_1+i\left[\frac{\pi}{4}\beta_1 -\frac{\pi}{2}\right]: -1\leq\beta_1\leq 1 \right\}
           \cup \left\{1+i\beta_2: -\frac{9\pi}{4}\leq \beta_2 \- \leq -\frac{\pi}{4} \right\} \\ &&
           \cup \left\{\beta_1+i\left[\frac{\pi}{4}\beta_1 -\frac{5\pi}{2}\right]: -1\leq\beta_1\leq 1 \right\}
           \cup \left\{-1+i\beta_2: -\frac{11\pi}{4}\leq \beta_2 \leq -\frac{3\pi}{4} \right\}. 
\end{eqnarray*}
(See Figure 6).  Note that by (\ref{Cma})
\begin{eqnarray}\label{flo1}
\int_{\mathcal{C}_1^+} e^{-\omega_0\rho\sinh\beta} H_N(\beta+i\theta)\ \mathrm{d}\beta
&=& \int_{\mathcal{C}_0^+} e^{-\omega_0\rho\sinh\beta} H_N(\beta+i\theta)\ \mathrm{d}\beta
   +\int_{\Gamma^+} e^{-\omega_0\rho\sinh\beta} H_N(\beta+i\theta)\ \mathrm{d}\beta.\nonumber \\
\end{eqnarray}
Also using the Cauchy Residues Theorem we obtain
$\displaystyle \int_{\Gamma^+} e^{-\omega_0\rho\sinh\beta} H_N(\beta+i\theta)\ \mathrm{d}\beta
=  -2i\pi \displaystyle\sum\limits_{p\in R_1(\theta)}\mathrm{Res }(e^{-\omega_0\rho\sinh\beta} H_N(\beta+i\theta), p)$,
where $R_1(\theta)$  is a set of all the poles of function $e^{-\omega_0\rho\sinh\beta} H_N(\beta+i\theta)$
lying inside  of  $\Gamma^+$. Calculating the residues by means of
(\ref{poloHNth}), (\ref{alfa}) and (\ref{defFi})
we obtain
\begin{eqnarray*}
\int_{\Gamma^+} e^{-\omega_0\rho\sinh\beta} H_N(\beta+i\theta)\ \mathrm{d}\beta
 &=& -4i\Phi\ \left\{
     \begin{array}{ll}
     e^{i\omega_0\rho \cos(\theta - \alpha)} + e^{i\omega_0\rho \cos(\theta - \theta_1)}, &  \theta\in[\phi,\theta_1),\\
     e^{i\omega_0\rho \cos(\theta - \alpha)},                                  & \theta\in[\theta_1, \theta_2],\\
     e^{i\omega_0\rho \cos(\theta - \alpha)} + e^{i\omega_0\rho \cos(\theta - \theta_2)}, &  \theta\in(\theta_2,2\pi].
     \end{array}
     \right.
\end{eqnarray*}
Hence from (\ref{flo0}), (\ref{flo1}), (\ref{adrtt})  we infer that
$\overline{A} := A_{in} + A_r + \overline{A}_d$,
with
$$\overline{A}_d := \dfrac{i}{4\Phi} \displaystyle\int_{\mathcal{C}_0^+} e^{-\omega_0\rho\sinh\beta} H_N(\beta+i\theta)\ \mathrm{d}\beta.$$
Therefore it suffices only to prove that
\begin{eqnarray}\label{AdAdbar}
\overline{A}_d = A_d,
\end{eqnarray}
by  (\ref{sumA}). Making the change of variable $\beta\mapsto\beta+i\frac{\pi}{2}$ and then the change of variable $\beta\mapsto\beta+2i\pi$ in the integral  over $\gamma_1^+-i\frac{3\pi}{2}$ we obtain
$\overline{A}_d
= -\dfrac{i}{4\Phi} \displaystyle \int_{\gamma_1^+ + i\frac{ \pi}{2}}  e^{i\omega_0\rho\cosh\beta}  Z_N\left(\beta+i\theta\right)\ \mathrm{d}\beta$,
where $Z_N$ is defined by (\ref{ZN}). Now we are able to transform the contour $\gamma_1^+ + i\frac{\pi}{2}$ to the contour $\mathbb{R}$ using exponential decrease of $Z_N$. Namely, by the Cauchy Residue Theorem, (\ref{BZ}), the fact that $Z_N(\beta+i\theta)$ is analytic in
$\mathbb{C}\setminus P(\theta)$ (see (\ref{poloHNth}))  and (\ref{BZ}), we obtain that
$\displaystyle\int_{\gamma_1^+ + i\frac{ \pi}{2}}  e^{i\omega_0\rho\cosh\beta}  Z_N\left(\beta+i\theta\right)\ \mathrm{d}\beta
= - \int_{-\infty}^{+\infty}  e^{i\omega_0\rho\cosh\beta}  Z_N\left(\beta+i\theta\right)\ \mathrm{d}\beta.$
Hence (\ref{AdAdbar}) follows.
Theorem \ref{TeoRepA} is proved.
\end{proof}


{\scriptsize
\setlength{\unitlength}{0.8mm}
\begin {picture} (200,130)


\put(80,95) {\circle*{0.7}}
\put(80,80) {\circle*{0.7}}
\put(80,87.5) {\circle*{0.7}}
\put(80,95) {\circle*{0.7}}
\put(80,72.5) {\circle*{0.7}}
\put(80,27.5) {\circle*{0.7}}
\put(80,20) {\circle*{0.7}}
\put(80,65) {\circle*{0.7}}
\put(80,50) {\circle*{0.7}}
\put(80,5) {\circle*{0.7}}
\put(80,-10){\circle*{0.7}}
\put(80,12.5) {\circle*{0.7}}
\put(80,110) {\circle*{1}}


\put(80.5,95){$0$}
\put(81,110){$\frac{\pi}{2}$}
\put(81,87.5){$-\frac{\pi}{4}$}
\put(81,72.5){$-\frac{3\pi}{4}$}
\put(81,27.5){$-\frac{9\pi}{4}$}
\put(81,12.5){$-\frac{11\pi}{4}$}
\put(81,65){$-\pi$}
\put(81,50){$-\frac{3\pi}{2}$}
\put(81,5){$-3\pi$}
\put(81,-10){$-\frac{7\pi}{2}$}

 \put(81,18.5){$-\frac{5\pi}{2}$}
 \put(81,79){$-\frac{\pi}{2}$}


\qbezier(82,98)(85,102)(90,104)
\qbezier(90,104)(100,105.5)(120,106.5)
\qbezier(120,106.5)(140,107)(150,107.3)
\qbezier(80,95)(75,88)(70,86)
 \qbezier(70,86)(60,84.5)(40,83.5)
\qbezier(40,83.5)(20,83)(10,82.7)
\put(82,68){\line(0,1){3}}
\put(82,76){\line(0,1){3}}
\put(82,83){\line(0,1){3}}
\put(82,91){\line(0,1){3}}
 \put(84,70){\line(0,1){2}}
 \put(84,76.5){\line(0,1){2}}

 \put(84,83){\line(0,1){3}}
 \put(84,91){\line(0,1){9}}



\put(86,92){\line(0,1){10}}

 \put(88,73){\line(0,1){30}}
\put(90,74){\line(0,1){30}}
 \put(92,74){\line(0,1){30}}
\put(94,74.5){\line(0,1){30}}
 \put(96,75){\line(0,1){30}}
\put(98,75){\line(0,1){30}}
 \put(100,75){\line(0,1){30}}
\put(102,75.6){\line(0,1){30}}
 \put(104,75.5){\line(0,1){30}}
 \put(106,76){\line(0,1){30}}
 \put(108,76.2){\line(0,1){30}}
 \put(110,76.2){\line(0,1){30}}
 \put(110,76.3){\line(0,1){30}}
 \put(112,76.4){\line(0,1){30}}
  \put(114,76.4){\line(0,1){30}}
   \put(116,76.5){\line(0,1){30}}
    \put(118,76.5){\line(0,1){30}}
     \put(120,76.6){\line(0,1){30}}

 \put(122,76.6){\line(0,1){30}}
  \put(124,76.7){\line(0,1){30}}
   \put(126,76.7){\line(0,1){30}}
    \put(128,76.8){\line(0,1){30}}
     \put(130,76.8){\line(0,1){30}}
 \put(132,76.9){\line(0,1){30}}
  \put(134,76.9){\line(0,1){30}}
   \put(136,77){\line(0,1){30}}
    \put(138,77){\line(0,1){30}}
     \put(140,77.1){\line(0,1){30}}
 \put(142,77.1){\line(0,1){30}}
  \put(144,77.2){\line(0,1){30}}
   \put(146,77.2){\line(0,1){30}}
    \put(148,77.3){\line(0,1){30}}
     \put(150,77.3){\line(0,1){30}}
\put(78,62){\line(0,1){30}} \put(76,60.5){\line(0,1){30}}
\put(74,58.5){\line(0,1){30}}
 \put(72,57){\line(0,1){30}}
\put(70,56){\line(0,1){30}}
 \put(68,56){\line(0,1){30}}
\put(66,55.5){\line(0,1){30}}
 \put(64,55){\line(0,1){30}}
\put(62,55){\line(0,1){30}}
\put(60,55){\line(0,1){30}}
\put(58,54.4){\line(0,1){30}}
 \put(56,54.5){\line(0,1){30}}
 \put(54,54){\line(0,1){30}}
 \put(52,53.8){\line(0,1){30}}
 \put(50,53.8){\line(0,1){30}}
 \put(48,53.8){\line(0,1){30}}
 \put(46,53.6){\line(0,1){30}}
  \put(44,53.6){\line(0,1){30}}
   \put(42,53.5){\line(0,1){30}}
    \put(40,53.5){\line(0,1){30}}
     \put(38,53.4){\line(0,1){30}}
 \put(36,53.4){\line(0,1){30}}
  \put(34,53.3){\line(0,1){30}}
   \put(32,53.3){\line(0,1){30}}
    \put(30,53.2){\line(0,1){30}}
     \put(28,53.2){\line(0,1){30}}
 \put(26,53.1){\line(0,1){30}}
  \put(24,53.1){\line(0,1){30}}
   \put(22,53){\line(0,1){30}}
    \put(20,53){\line(0,1){30}}
     \put(18,52.9){\line(0,1){30}}
 \put(16,52.9){\line(0,1){30}}
  \put(14,52.8){\line(0,1){30}}
   \put(12,52.8){\line(0,1){30}}
    \put(10,52.7){\line(0,1){30}}

\qbezier(80,65)(85,72)(85,72)
\qbezier(87.5,73)(88,73.5)(90,74)
 \qbezier(90,74)(100,75.5)(120,76.5)
\qbezier(120,76.5)(140,77)(150,77.3)


\qbezier(80,65)(75,58)(70,56) \qbezier(70,56)(60,54.5)(40,53.5)
\qbezier(40,53.5)(20,53)(10,52.7)
 \qbezier(80,35)(85,42)(90,44)
\qbezier(90,44)(100,45.5)(120,46.5)
\qbezier(120,46.5)(140,47)(150,47.3)
\qbezier(80,35)(75,28)(70,26) \qbezier(70,26)(60,24.5)(40,23.5)
\qbezier(40,23.5)(20,23)(10,22.7)

 \put(82,8){\line(0,1){4.5}}
 \put(82,15){\line(0,1){2.3}}
 \put(82,24){\line(0,1){3}}
 \put(82,30){\line(0,1){7}}

 \put(84,11){\line(0,1){1.5}}
\put(84,14.5){\line(0,1){3}}
 \put(84,24){\line(0,1){3}}
 \put(84,32){\line(0,1){7}}



\put(86,33){\line(0,1){8}}


 \put(88,17){\line(0,1){2}}
 \put(88,24){\line(0,1){3}}
 \put(88,32){\line(0,1){11}}


\put(90,14){\line(0,1){30}}
 \put(92,14){\line(0,1){30}}
\put(94,14.5){\line(0,1){30}}
 \put(96,15){\line(0,1){30}}
\put(98,15){\line(0,1){30}}
 \put(100,15){\line(0,1){30}}
\put(102,15.6){\line(0,1){30}}
 \put(104,15.5){\line(0,1){30}}
 \put(106,16){\line(0,1){30}}
 \put(108,16.2){\line(0,1){30}}
 \put(110,16.2){\line(0,1){30}}
 \put(110,16.3){\line(0,1){30}}
 \put(112,16.4){\line(0,1){30}}
  \put(114,16.4){\line(0,1){30}}
   \put(116,16.5){\line(0,1){30}}
    \put(118,16.5){\line(0,1){30}}
     \put(120,16.6){\line(0,1){30}}

 \put(122,16.6){\line(0,1){30}}
  \put(124,16.7){\line(0,1){30}}
   \put(126,16.7){\line(0,1){30}}
    \put(128,16.8){\line(0,1){30}}
     \put(130,16.8){\line(0,1){30}}
 \put(132,16.9){\line(0,1){30}}
  \put(134,16.9){\line(0,1){30}}
   \put(136,17){\line(0,1){30}}
    \put(138,17){\line(0,1){30}}
     \put(140,17.1){\line(0,1){30}}
 \put(142,17.1){\line(0,1){30}}
  \put(144,17.2){\line(0,1){30}}
   \put(146,17.2){\line(0,1){30}}
    \put(148,17.3){\line(0,1){30}}
     \put(150,17.3){\line(0,1){30}}

\put(78,2){\line(0,1){30}}
 \put(76,0.5){\line(0,1){30}}
\put(74,-2.1){\line(0,1){30}}
 \put(72,-3){\line(0,1){30}}
\put(70,-4){\line(0,1){30}}
 \put(68,-4){\line(0,1){30}}
\put(66,-4.5){\line(0,1){30}}
 \put(64,-5){\line(0,1){30}}
\put(62,-5){\line(0,1){30}}

\put(60,-5.2){\line(0,1){30}}

\put(58,-5.6){\line(0,1){30}}
 \put(56,-5.5){\line(0,1){30}}
 \put(54,-6){\line(0,1){30}}
 \put(52,-6.2){\line(0,1){30}}
 \put(50,-6.2){\line(0,1){30}}
 \put(48,-6.2){\line(0,1){30}}
 \put(46,-6.4){\line(0,1){30}}
  \put(44,-6.4){\line(0,1){30}}
   \put(42,-6.5){\line(0,1){30}}
    \put(40,-6.5){\line(0,1){30}}
     \put(38,-6.6){\line(0,1){30}}
 \put(36,-6.6){\line(0,1){30}}
  \put(34,-6.7){\line(0,1){30}}
   \put(32,-6.7){\line(0,1){30}}
    \put(30,-6.8){\line(0,1){30}}
     \put(28,-6.8){\line(0,1){30}}
 \put(26,-6.9){\line(0,1){30}}
  \put(24,-6.9){\line(0,1){30}}
   \put(22,-7){\line(0,1){30}}
    \put(20,-7){\line(0,1){30}}
     \put(18,-7.1){\line(0,1){30}}
 \put(16,-7.1){\line(0,1){30}}
  \put(14,-7.2){\line(0,1){30}}
   \put(12,-7.2){\line(0,1){30}}
    \put(10,-7.3){\line(0,1){30}}

\qbezier(80,65)(85,72)(85,72)
\qbezier(87.5,73)(88,73.5)(90,74)

 \qbezier(90,74)(100,75.5)(120,76.5)
\qbezier(120,76.5)(140,77)(150,77.3)


\qbezier(80,5)(85,12)(85,12)


 \qbezier(90,14)(100,15.5)(120,16.5)
\qbezier(120,16.5)(140,17)(150,17.3)


\qbezier(80,5)(75,-2)(70,-4) \qbezier(70,-4)(60,-5.5)(40,-6.5)
\qbezier(40,-6.5)(20,-7)(10,-7.3)








\linethickness{0.3mm} \put (10,72.5){\line(1,0){58}}
\linethickness{0.3mm} \put (92,87.5){\line(1,0){57.8}}




\linethickness{0.3mm}\put(10,12.5){\line(1,0){58}}
\linethickness{0.3mm}\put(92,27.5){\line(1,0){57.8}}

{\linethickness{0.3mm}\put(140,87.5){\vector(-1,0){20}}}
{\linethickness{0.3mm}\put(92,77.5){\vector(0,-1){20}}}
{\linethickness{0.3mm}\put(103,27.5){\vector(1,0){20}}}

{\linethickness{0.3mm}\put(63,72.5){\vector(-1,0){20}}}
{\linethickness{0.3mm}\put(68,22.5){\vector(0,1){20}}}
{\linethickness{0.3mm}\put(23,12.5){\vector(1,0){20}}}



\linethickness{0.3mm} \put(92,28){\line(0,1){58}}
\linethickness{0.3mm} \put(68,13.5){\line(0,1){58}}


\linethickness{0.4mm} \put(68,72.5){\line(5,3){25}}
\linethickness{0.4mm} \put(68,12.5){\line(5,3){25}}

\linethickness{0.6mm} \put(68,71.5){\line(5,3){24}}
\linethickness{0.6mm} \put(68,13.5){\line(5,3){24}}


\linethickness{0.3mm} \put(72,15){\vector(2,1){3}}
\linethickness{0.3mm} \put(87,83.7){\vector(-2,-1){3}}

\linethickness{0.3mm} \put(72,73.8){\vector(2,1){3}}
\linethickness{0.3mm} \put(87,24.7){\vector(-2,-1){3}}




\put(127.4,90.5){$\gamma_1^+$}
\put(124.4,30.5){$\gamma_1^+-2i\pi$}

\put(61,40.5){$\Gamma^+$}


\thinlines \put(80,-12){\vector(0,1){130}}


 \end{picture}
}

\vspace{0.5cm}

\centerline{Figure 6. The  contours $\mathcal{C}_{0}^+$ and $\Gamma^+$}

\vspace{0.5cm}

\section{The rate of convergence to the limiting amplitude}

In this section we analyze in more detail the difference between the amplitude of the nonstationary diffracted wave and its limiting amplitude in the case when the profile function $f$ is the Heaviside function (see (\ref{fH})). Let
\begin{eqnarray}\label{Rd}
R_d(\rho,\theta,t):= A_d(\rho,\theta) - A_d(\rho,\theta,t), \qquad \rho\geq 0, \ \theta\neq\theta_{1,2}, \ t\geq0,
\end{eqnarray}
where $A_d(\rho,\theta)$, $A_d(\rho,\theta,t)$ are defined in (\ref{Ainrd}) and (\ref{repAd}), respectively.
Since $f$ is the Heaviside function, 
\begin{eqnarray}\label{1chR}
R_d(\rho,\theta,t)
    &=& \dfrac{i}{4\Phi} \int\limits_{|\beta|\geq \mathrm{ac}(\frac{t}{\rho})} e^{i\omega_0 \rho \cosh\beta} Z_N(\beta+i\theta)
        \ \mathrm{d}\beta
\end{eqnarray}
where $Z_N$ is defined by (\ref{ZN}) and $\mathrm{ac}(\cdot)$ is defined in (\ref{defac}). First we need an expansion of $Z_N(\beta+i\theta)$.

\begin{lem} The function $Z_N$ admits the following representation
\begin{eqnarray}\label{ZNexpan}
Z_N(\beta+i\theta)
&=& ib_1 \left[
      \displaystyle\sum\limits_{k=1}^6 z_k^\pm e^{\mp 2kq\beta} + r_1^\pm(\beta) e^{\mp 14 q\beta}, \qquad \beta\in\mathbb{R}.
      \right]
\end{eqnarray}
where
\begin{equation}\label{bz1z2r}
\begin{array}{rcl}
b_1 &:=& 4\sin\left(\frac{\pi^2}{\Phi}\right)
; \qquad \\
z_1^\pm &=& z_1^\pm (\theta,\alpha)
    := e^{\pm 2iq(\pi - \theta + \alpha)} \big[1 + e^{\pm 4iq(\pi - \alpha)}\big], \\
z_2^\pm &=& z_2^\pm (\theta,\alpha)
    := e^{\pm 2iq(\pi - 2\theta + 2\alpha)} \big[1 + e^{\pm 8iq(\pi - \alpha)}\big]
                                              \big[ 1 + e^{\pm 4iq\pi}\big],  \\
z_m &:=& z_m^+ + z_m^- \in \mathbb{R}, \ m=\overline{1,6};
\qquad |z_m^\pm(\theta,\alpha)| \leq C, \ \theta\in\mathbb{R},\ m=\overline{2,6}; \\
z_1 &=& z_1(\theta,\alpha)
    = 4 \cos\left[\dfrac{\pi}{\Phi}( 2\pi - \theta)\right] \cos\left[\dfrac{\pi}{\Phi}(  \pi - \alpha)\right];
\\ &&
|r_1^\pm\left(\beta,\theta,\alpha\right)| \leq C, \ \pm \beta \geq \frac{\ln 2}{q}, \ \theta\in\mathbb{R}.
\end{array}
\end{equation}
\end{lem}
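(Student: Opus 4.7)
The plan is to expand each of the four $\coth$ factors appearing in
\begin{equation*}
Z_N(\beta+i\theta) = H_N\bigl(\beta+i\theta - \tfrac{5i\pi}{2}\bigr) - H_N\bigl(\beta+i\theta - \tfrac{i\pi}{2}\bigr)
\end{equation*}
as an absolutely convergent geometric series in $e^{\mp 2q\beta}$, and then to collect powers. I treat the $+$ case ($\beta\geq (\ln 2)/q$, so that $|e^{-2q\beta}|\leq 1/2$); the $-$ case will be obtained by the analogous expansion using $\coth[q(\beta+i\mu)] = -1 - 2\sum_{k\geq 1} e^{2kq\beta}e^{2ikq\mu}$ valid for $\beta\leq -(\ln 2)/q$.

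First I would apply $\coth[q(\beta+i\mu)] = 1 + 2\sum_{k\geq 1} e^{-2kq\beta}e^{-2ikq\mu}$ to the four imaginary shifts $\mu\in\{\theta-\alpha,\ \theta-2\pi+\alpha,\ \theta-2\pi-\alpha,\ \theta-4\pi+\alpha\}$ entering $Z_N$. The four constants $1$ cancel in pairs (two enter with a plus sign, two with a minus sign), and the coefficient of $e^{-2kq\beta}$ can be rearranged as
\begin{equation*}
c_k = 2e^{-2ikq\theta}\bigl(e^{4ikq\pi}-1\bigr)\bigl[e^{2ikq\alpha} + e^{-2ikq\alpha}e^{4ikq\pi}\bigr].
\end{equation*}
Writing $e^{4ikq\pi}-1 = 2i\sin(k\pi^2/\Phi)\,e^{ik\pi^2/\Phi}$ and noting $e^{2ikq\alpha} + e^{-2ikq\alpha}e^{4ikq\pi} = 2e^{2ikq\pi}\cos[k\pi(\pi-\alpha)/\Phi]$, one obtains $c_k = 8i\sin(k\pi^2/\Phi)\cos[k\pi(\pi-\alpha)/\Phi]\,e^{ik\pi(2\pi-\theta)/\Phi}$. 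The sine multiple-angle identity $\sin(k\pi^2/\Phi) = \sin(\pi^2/\Phi)\,U_{k-1}(\cos(\pi^2/\Phi))$ (Chebyshev polynomial of the second kind) then allows the common factor $b_1 = 4\sin(\pi^2/\Phi)$ to be pulled out of every $c_k$, giving $c_k = ib_1\,z_k^+$ with $z_k^+$ a bounded trigonometric polynomial in $\theta,\alpha$.

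For $k=1$ one has $U_0=1$, and reshuffling the bracket recovers the stated $z_1^+ = e^{2iq(\pi-\theta+\alpha)}[1+e^{4iq(\pi-\alpha)}]$; for $k=2$, $U_1(x)=2x$ contributes the extra factor $2\cos(\pi^2/\Phi)$, which, up to a phase absorbed into the leading exponential, equals $1+e^{4iq\pi}$ and yields the claimed $z_2^+$. Reality of $z_k = z_k^++z_k^-$ is immediate because $z_k^- = \overline{z_k^+}$, and the explicit formula for $z_1$ follows from $1+e^{\pm 4iq(\pi-\alpha)} = 2e^{\pm 2iq(\pi-\alpha)}\cos[\pi(\pi-\alpha)/\Phi]$ on adding complex conjugates. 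The remainder $r_1^\pm(\beta)e^{\mp 14q\beta}$ collects the tail from $k=7$ onward: since $|c_k|\leq C$ uniformly in $\theta,\alpha$, the tail is dominated by $C\sum_{k\geq 7}e^{\mp 2kq\beta}\leq 2C\,e^{\mp 14q\beta}$ for $\pm\beta\geq (\ln 2)/q$, giving the stated bound on $r_1^\pm$. The main obstacle is the bookkeeping needed to pull $b_1$ uniformly out of every $c_k$; once the Chebyshev factorization together with the bracket identity above is in place, matching the compact closed forms for $z_1^\pm$ and $z_2^\pm$ reduces to routine algebraic rewriting.
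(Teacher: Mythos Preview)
Your argument is correct and complete, but it takes a genuinely different route from the paper. The paper does not expand the four $\coth$ terms separately; instead it pairs them via the identity $\coth A-\coth B=\sinh(B-A)/(\sinh A\sinh B)$, writing $Z_N(\beta)=H_1(\beta)+H_2(\beta)$ with
\[
H_k(\beta)=\frac{\sinh(2iq\pi)}{\sinh\bigl[q(\beta+i(-1)^k\alpha-2i(k-1)\pi)\bigr]\,\sinh\bigl[q(\beta+i(-1)^k\alpha-2ik\pi)\bigr]},
\]
and then expands each $H_k$ as a power series in $s^{\pm 2}$ with $s=e^{-q\beta}$, citing an external reference (\cite{ocoa}) for the routine part. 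In that presentation the factor $b_1=4\sin(\pi^2/\Phi)$ is visible immediately: the numerator contributes $\sinh(2iq\pi)=i\sin(\pi^2/\Phi)$ and each $1/\sinh$ factor contributes a leading $2$. The remainder bound comes from analyticity of the $1/(\sinh\cdot\sinh)$ factor in $s$ on $|s|\leq 1/2$.

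Your approach trades this structural shortcut for the Chebyshev identity $\sin(k\psi)=\sin\psi\,U_{k-1}(\cos\psi)$, which is what lets you pull $b_1$ out of every $c_k$ after the fact. The gain is that you obtain a closed form $c_k=8i\sin(k\pi^2/\Phi)\cos[k\pi(\pi-\alpha)/\Phi]\,e^{ik\pi(2\pi-\theta)/\Phi}$ valid for \emph{all} $k$ at once, from which the reality of $z_k$, the explicit $z_1$, and the tail bound (via $|c_k|\leq 8$ uniformly) drop out without appealing to an outside proposition. The paper's pairing, by contrast, makes the common prefactor manifest with no extra identity needed, at the cost of a slightly heavier series expansion for each $H_k$. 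Both routes are sound; yours is more self-contained.
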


\begin{proof} From (\ref{ZN}) and (\ref{HN}) it follows that
$Z_N(\beta) =  H_1(\beta) + H_2(\beta)$, 
$\beta\in\mathbb{C}$,
where
$$H_k(\beta) =  \dfrac{ \sinh\left(2iq\pi\right) }
                      { \sinh\left[ q \left(\beta + i (-1)^k \alpha - 2i(k-1)\pi \right) \right]
                        \sinh\left[ q \left(\beta + i (-1)^k \alpha - 2ik\pi \right) \right]},$$
for $k=1,2$. Let $\beta:=\beta_1+i\beta_2$ and
$s=s(\beta_1):=e^{-q\beta_1}$.
Then, for $k=1,2$, the functions $H_k$ admit the 
expansions
$H_k(\beta)
= -ib_1
\left[\displaystyle\sum\limits_{j=1}^{6} h_{k,j}^{\pm }  s^{\pm 2j}
        +\ r_1(s^{\pm 1},e^{\pm ic_{k,1}(\beta_2)},e^{\pm ic_{k,2}(\beta_2)}) s^{\pm 14} \right]$, 
where
$c_{1,1}\left( \beta_2 \right) := q(\beta_2 - \alpha)$,
$c_{1,2}\left( \beta_2 \right) := q(\beta_2 - \alpha - 2\pi)$,
$c_{2,1}\left( \beta_2 \right) := q(\beta_2 + \alpha - 2\pi)$,
$c_{2,2}\left( \beta_2 \right) := q(\beta_2 + \alpha -4\pi)$.
In both cases the function $s\mapsto r_1(s,a_1,a_2)$ is analytic in $B_1(0):=\{z\in\mathbb{C}: |z|<1\}$ for all $a_1,a_2\in\mathbb{T}:=\{z\in\mathbb{C}: |z|=1\}$ and  admits the estimate
\begin{eqnarray}\label{1br1}
|r_1(s,a_1,a_2)| &\leq& C_1, \qquad  |s|\leq 2^{-1}, \ \ a_1,a_2\in\mathbb{T}.
\end{eqnarray}
Moreover,
$h_{1,1}^\pm = e^{\mp 2iq( \beta_2 - \alpha -  \pi)}$,
$h_{2,1}^\pm = e^{\mp 2iq( \beta_2 + \alpha - 3\pi)}$ and
$h_{k,j}^+ + h_{k,j}^- \in \mathbb{R}$, for  $k=1,2$;  $ j=\overline{1,6}$.
These statements are proved similarly to Proposition 2.1 (iii) of \cite{ocoa}. Hence, taking $\beta_2=0$ we infer (\ref{ZNexpan}),  (\ref{bz1z2r}).
\end{proof}

\begin{lem} The function $R_d(\rho,\theta,t)$  admits the following representation
\begin{eqnarray}\label{REp}
R_d(\rho,\theta,t)
&=& -\dfrac{b_1}{4\Phi}
    \left[ \displaystyle\sum\limits_{m=1}^{6} z_m E_{2qm} + R_\infty(\rho,\theta,t)
        \right],
\end{eqnarray}
where $b_1$, $z_m$ are given by (\ref{bz1z2r}) for $m=\overline{1,6}$,
\begin{equation}\label{zmEp}
\begin{array}{rcl}
E_p &:=& E_p(\rho,t)
      =\displaystyle \int\limits_{\mathrm{ac}\left(\frac{t}{\rho}\right)}^{+\infty} e^{i\omega_0 \rho \cosh\beta - p\beta}\ \mathrm{d}\beta
\end{array}
\end{equation}
and
\begin{eqnarray}\label{bR}
|R_\infty(t,\rho,\theta)| &\leq C(\rho_0) t^{-14 q}&, \qquad 
t\geq \rho_0 2^{\frac{1}{q}}, \ \theta\in\mathbb{R}.
\end{eqnarray}
\end{lem}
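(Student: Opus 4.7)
The plan is to start from representation \eqref{1chR} for $R_d(\rho,\theta,t)$, split the integration domain $\{|\beta|\ge \mathrm{ac}(t/\rho)\}$ into its positive and negative halves, substitute the expansion \eqref{ZNexpan} of $Z_N(\beta+i\theta)$ on each half (using the $-$ sign for $\beta>0$ and the $+$ sign for $\beta<0$), and then match terms against \eqref{zmEp}. The prefactor $i/(4\Phi)$ combined with the $ib_1$ from \eqref{ZNexpan} will produce the global $-b_1/(4\Phi)$ in \eqref{REp}.

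More precisely, on the positive half I would write
\[
 \dfrac{i}{4\Phi}\int_{\mathrm{ac}(t/\rho)}^{+\infty} e^{i\omega_0\rho\cosh\beta}\,Z_N(\beta+i\theta)\,d\beta
 = -\dfrac{b_1}{4\Phi}\Bigl[\sum_{k=1}^{6} z_k^{-}\, E_{2qk}(\rho,t) + \mathcal R^{-}(\rho,\theta,t)\Bigr],
\]
where $\mathcal R^{-}$ is the tail integral coming from $r_1^{-}(\beta)e^{-14q\beta}$. On the negative half I would perform the change of variable $\beta\mapsto -\beta$; since $\cosh$ is even, this converts the integral into an analogous one over $[\mathrm{ac}(t/\rho),+\infty)$ with $z_k^{-}$ replaced by $z_k^{+}$ and $r_1^{-}$ replaced by $r_1^{+}$. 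Adding the two halves gives exactly $z_k^{+}+z_k^{-}=z_k$ as the coefficient of $E_{2qk}$, which produces the main sum in \eqref{REp} and identifies
\[
 R_\infty(\rho,\theta,t) = \int_{\mathrm{ac}(t/\rho)}^{+\infty} e^{i\omega_0\rho\cosh\beta}\,\bigl[r_1^{-}(\beta)+r_1^{+}(-\beta)\bigr]\,e^{-14q\beta}\,d\beta.
\]

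For the bound \eqref{bR} on $R_\infty$ I would use that, under the hypothesis $t\ge\rho_0\,2^{1/q}$ together with $\rho\le\rho_0$, one has $t/\rho\ge 2^{1/q}$ and hence $\mathrm{ac}(t/\rho)\ge \ln 2/q$, which is precisely the range where $|r_1^{\pm}|\le C$ from \eqref{bz1z2r} applies. Since $|e^{i\omega_0\rho\cosh\beta}|=1$, the integrand is dominated by $2C e^{-14q\beta}$, so
\[
 |R_\infty(\rho,\theta,t)|\le \frac{2C}{14q}\,e^{-14q\,\mathrm{ac}(t/\rho)}.
\]
Finally, from \eqref{defac}, $e^{\mathrm{ac}(t/\rho)}=t/\rho+\sqrt{(t/\rho)^2-1}\ge t/\rho$, whence $e^{-14q\,\mathrm{ac}(t/\rho)}\le(\rho/t)^{14q}\le \rho_0^{14q}\,t^{-14q}$, giving \eqref{bR} with $C(\rho_0)=(2C/14q)\rho_0^{14q}$.

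The routine parts are the change of variable on the negative half and the final exponential estimate; the one place that requires care is the bookkeeping of the $\pm$ signs when assembling $z_k^{+}+z_k^{-}=z_k$, because the expansion \eqref{ZNexpan} is a single formula with a $\pm$ branch whose meaning must be tracked through the change of variable to make sure that the coefficients in front of each $E_{2qk}$ really add up to the real quantity $z_k$ rather than some complex combination. This sign-matching is the main (and essentially only) obstacle; everything else is direct from the estimates already established in \eqref{bz1z2r} and the definition \eqref{zmEp}.
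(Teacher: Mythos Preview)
Your proposal is correct and is essentially a detailed version of the paper's own (very terse) proof, which simply says the representation follows from \eqref{1chR}, \eqref{ZNexpan}, and evenness, with the remainder bound coming from \eqref{bz1z2r} and \eqref{1br1}. Your splitting into positive and negative halves, the change of variable $\beta\mapsto-\beta$ on the negative half, and the recombination $z_k^{+}+z_k^{-}=z_k$ are exactly the steps the paper leaves implicit, and your remainder estimate via $e^{-14q\,\mathrm{ac}(t/\rho)}\le(\rho/t)^{14q}$ is the intended argument.
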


\begin{proof} It follows from (\ref{1chR}), (\ref{ZNexpan}) and the evenness of the integrand in (\ref{zmEp}).
The estimate (\ref{bR})  follows from 
estimates (\ref{bz1z2r}), (\ref{1br1}).
\end{proof}
%

The main result of this section is  the description of  the rate of the convergence to the limiting amplitude. Following  \cite{hew} we find this rate for the real and imaginary part separately.

\begin{teo}\label{teoasyR}
Let $R_d$ be defined by (\ref{Rd}). The real and imaginary parts of the function $e^{-i\omega_0 t} R_d$ 
admit the following asymptotic behavior when $t\rightarrow +\infty$
\begin{eqnarray}
\mathrm{Re}\big[ e^{-i\omega_0 t} R_d(\rho,\theta,t)\big] \label{asyReR}
&=& -  \dfrac{b_1 z_1 (2q+1)}{4\Phi\omega_0^2} \left(\dfrac{\rho}{2}\right)^{2q} \ t^{-(2q+2)}
        +  O\left(t^{-(4q+2)}\right), \\
\mathrm{Im}\big[ e^{-i\omega_0 t} R_d(\rho,\theta,t) \big]\label{asyImR}
&=& -  \dfrac{b_1 z_1}{4\Phi \omega_0}   \left(\dfrac{\rho}{2}\right)^{2q}    \ t^{-(2q+1)}
    +  O\left(t^{-(4q+1)}\right),
\end{eqnarray}
where $0<\rho\leq \rho_0$ 
and the symbol $O(\cdot)$ depends only on $\rho_0$.
\end{teo}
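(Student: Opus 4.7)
The plan is to expand $e^{-i\omega_{0}t}E_{p}(\rho,t)$ to two orders in $1/t$ via integration by parts, substitute into the representation (\ref{REp}) and show that only the $m=1$ summand contributes to the claimed leading order, with everything else absorbed into the stated $O(\cdot)$-remainders.

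First I would rewrite $E_{p}$ without the $\mathrm{ac}$-dependent lower limit by the substitution $v=\rho\cosh\beta$:
\[
E_{p}(\rho,t)=\rho^{p}\int_{t}^{+\infty}e^{i\omega_{0}v}\,F_{p}(v,\rho)\,dv,\qquad F_{p}(v,\rho):=\frac{1}{(v+\sqrt{v^{2}-\rho^{2}})^{p}\sqrt{v^{2}-\rho^{2}}}.
\]
For $v\ge 2\rho_{0}$ the function $F_{p}$ and its $v$-derivatives satisfy $|\partial_{v}^{k}F_{p}(v,\rho)|\le C_{k,\rho_{0}}\,v^{-(p+1+k)}$ uniformly in $\rho\in(0,\rho_{0}]$, via the expansion of $\sqrt{v^{2}-\rho^{2}}$ in powers of $\rho^{2}/v^{2}$.

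Three integrations by parts in $v$ using $e^{i\omega_{0}v}dv=d(e^{i\omega_{0}v})/(i\omega_{0})$ produce three boundary contributions at $v=t$ and a remainder integral. Using $F_{p}(t,\rho)=(2^{p}t^{p+1})^{-1}(1+O(t^{-2}))$ and $\partial_{v}F_{p}(t,\rho)=-(p+1)(2^{p}t^{p+2})^{-1}(1+O(t^{-2}))$, the first two boundary terms yield exactly the dominant asymptotic, while the third boundary term and the last oscillatory integral are each bounded by $C(\rho_{0})\,t^{-(p+3)}$. This produces
\[
e^{-i\omega_{0}t}E_{p}(\rho,t)=\frac{i(\rho/2)^{p}}{\omega_{0}\,t^{p+1}}+\frac{(p+1)(\rho/2)^{p}}{\omega_{0}^{2}\,t^{p+2}}+r_{p}(\rho,t),\qquad |r_{p}(\rho,t)|\le C(\rho_{0})\,t^{-(p+3)},
\]
with the first term purely imaginary and the second purely real.

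Plugging this into (\ref{REp}) and using $b_{1},z_{m}\in\mathbb{R}$ by (\ref{bz1z2r}), the real part of $e^{-i\omega_{0}t}R_{d}$ picks up only the second boundary term of each $E_{2qm}$, and its imaginary part only the first. Extracting $m=1$ explicitly produces the stated leading asymptotics. All remaining contributions---the $m\ge 2$ summands (uniformly bounded since $|z_{m}|\le C$), the IBP-remainders $r_{2qm}=O(t^{-(2qm+3)})$, and the error $R_{\infty}=O(t^{-14q})$---are absorbed into $O(t^{-(4q+2)})$ for the real part and $O(t^{-(4q+1)})$ for the imaginary part, using $2q+3>4q+2$ and $14q>4q+2$ for $q\in[1/4,1/2)$.

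The main obstacle is verifying the uniformity of the remainder in $\rho\in(0,\rho_{0}]$ during the IBP step. Although $F_{p}$ is singular at $v=\rho$, for $v\ge t\ge 2\rho_{0}$ this singularity lies well to the left and the derivative bounds follow from routine Leibniz estimates in the small parameter $\rho^{2}/v^{2}\le 1/4$. The apparent negative powers of $\rho$ arising when differentiating $(v+\sqrt{v^{2}-\rho^{2}})^{-p}$ must be shown to cancel so that the final $\rho$-dependence in the boundary terms reduces exactly to the factor $(\rho/2)^{p}$; this cancellation is a direct consequence of the algebraic identity $(v+\sqrt{v^{2}-\rho^{2}})(v-\sqrt{v^{2}-\rho^{2}})=\rho^{2}$.
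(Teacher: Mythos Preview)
Your proposal is correct and follows essentially the same approach as the paper: both obtain the two-term asymptotic $e^{-i\omega_0 t}E_p = i(\rho/2)^p\omega_0^{-1}t^{-(p+1)} + (p+1)(\rho/2)^p\omega_0^{-2}t^{-(p+2)} + O(t^{-(p+3)})$, substitute it into (\ref{REp}), and sort the remainders using $q\in(\tfrac14,\tfrac12)$. The paper reaches this expansion via the substitution $u=(\rho/t)e^{\beta}$ and an appeal to Lemma~3.2 of \cite{ocoa} together with the auxiliary expansion (\ref{asyB}), while you use $v=\rho\cosh\beta$ and carry out the three-fold integration by parts directly; the two routes are equivalent, yours being slightly more self-contained.
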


%
%
%

\begin{proof}
After the change of variable
$u =  \dfrac{\rho}{t}\ e^\beta$,
the function $E_p$, given by (\ref{zmEp}), takes the form
$E_p(\rho,t) = \left(\dfrac{\rho}{t}\right)^p B_p(\rho,t)$,
where
$B_p(\rho,t)
= \displaystyle\int\limits_{1+\sqrt{1-(\frac{\rho}{t})^2}}^{+\infty}
 e^{i\omega_0 \frac{u^2 t^2 + \rho^2}{2ut}}\ u^{-p-1}\ \mathrm{d}u.$
Similarly to the proof of Lemma 3.2 in \cite{ocoa} we obtain the following expansions
\begin{eqnarray}\label{b1}
E_p(\rho,t) &=& \frac{\rho^p}{t^{p+1}}\overline{E}_p(\rho,t)
    +   \frac{2(p+1)}{i\omega_0} \cdot \frac{\rho^p}{t^{p+2}} \overline{E}_{p+1}(\rho,t)
    +   O\left(\frac{1}{t^{p+3}}\right)
\end{eqnarray}
where
$\overline{E}_p(\rho,t) = -\dfrac{2}{i\omega_0}\ e^{i\omega_0 t} \left[\dfrac{t}{t+\sqrt{t^2-\rho^2}}\right]^{p+1}$.
Expanding $\overline{E}_p$ as in (\ref{asyB}) (Appendix A4) and substituting it into (\ref{b1}) we get
$E_p(\rho,t)
= - e^{i\omega_0 t} \dfrac{\rho^{p}}{2^{p}(i\omega_0)}        \cdot \dfrac{1}{t^{p+1}}
    - e^{i\omega_0 t} \dfrac{\rho^{p}(p+1)}{2^{p}(i\omega_0)^2} \cdot \dfrac{1}{t^{p+2}}
    +  O\left(\frac{1}{t^{p+3}}\right)$.
Finally, substituting these expressions into (\ref{REp}) and using (\ref{bR}) we obtain
\begin{equation}\label{lagata}
\begin{array}{rcl}
e^{-i\omega_0 t} R_d(\rho,\theta,t)
&=& -\dfrac{b_1 z_1}{4\Phi}
    \left[   \left(\dfrac{\rho}{2}\right)^{2q} \dfrac{2q+1}{ \omega_0^2} \ t^{-(2q+2)}
           - \left(\dfrac{\rho}{2}\right)^{2q} \dfrac{  1 }{i\omega_0  } \ t^{-(2q+1)}   \right]\\ &&
    -\dfrac{b_1 z_2}{4\Phi}
    \left[   \left(\dfrac{\rho}{2}\right)^{4q} \dfrac{4q+1}{ \omega_0^2} \ t^{-(4q+2)}
           - \left(\dfrac{\rho}{2}\right)^{4q} \dfrac{ 1  }{i\omega_0  } \ t^{-(4q+1)}   \right]\\ &&
    +  \sum\limits_{j=3}^6  m_j t^{-(2qj + 2)}
    +i \sum\limits_{j=3}^6  n_j t^{-(2qj + 1)} \\&&
    +  \sum\limits_{j=1}^6   O( t^{-(2qj + 3)})
    +  r_\infty(\rho,\theta,t) t^{-14q},\qquad t\rightarrow\infty,
\end{array}
\end{equation}
where $m_j,n_j\in\mathbb{R}$ by (\ref{bz1z2r}) and
 $ |r_\infty(\rho,\theta,t)| \leq C(\rho_0)$. 
Noting that $q>\frac{1}{4}$ by (\ref{defFi}), we infer from this  (\ref{asyReR}) and (\ref{asyImR}).
\end{proof}

\section{The case of  half plane}

In this section we consider the case of $\Phi=2\pi$ (see Remark \ref{obsalpha}) and we compare our results  with the results of \cite{hew}.


\begin{prop} Let $\Phi=2\pi$ and $f$ be the Heaviside function.  Then representation (\ref{rud}) for the diffracted wave can be rewritten as
        \begin{eqnarray}\label{ud2T}
        u_{d}(\rho,\theta,t)
        = \frac{ie^{-i\omega_0 t}}{2\pi} \int\limits_{-\mathrm{ac}(\frac{t}{\rho})}^{\mathrm{ac}(\frac{t}{\rho})}
        e^{i\omega_0\rho\cosh\beta}
        A(\beta)\ \mathrm{d}\beta, \qquad t\geq 0,
        \end{eqnarray}
        where
        \begin{eqnarray}\label{defA}
        A(\beta)
        :=  \frac{-\sinh\frac{i\alpha}{2}}{2}
        \left[
                \frac{  \cosh\frac{\beta+i\theta}{2} }
                     {  \sinh\frac{\beta+i\theta+i\alpha}{2} \sinh\frac{\beta+i\theta-i\alpha}{2} }
                +\frac{ \cosh\frac{\beta-i\theta}{2}}
                      { \sinh\frac{\beta-i\theta-i\alpha}{2} \sinh\frac{\beta-i\theta+i\alpha}{2} }
        \right].
        \end{eqnarray}
Moreover,
        \begin{eqnarray}\label{rudpart2}
        u_{d}(\rho,\theta,t)
        &=& \dfrac{e^{-i\omega_0 t}}{\pi} \int\limits_{-\mathrm{ac}(\frac{\rho}{t})}^{\mathrm{ac}(\frac{\rho}{t})}
                 e^{i\omega_0 \rho\cosh\beta}
                 \left[
                 \dfrac{ \cosh\frac{\beta}{2}  \cosh\frac{i\theta}{2} }{ \cosh\beta + \cosh( i\theta) }
                 \right]\mathrm{d}\beta.
        \end{eqnarray}
for $\alpha=\pi$.
\end{prop}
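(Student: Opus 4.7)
The plan is to start from representation (\ref{rud}) and show that, for $\Phi=2\pi$, the density $Z_N(\beta+i\theta)$ coincides, after symmetrization in $\beta$, with $4A(\beta)$; this will give (\ref{ud2T}) once the Heaviside function is used to reduce the domain of integration to $\{|\beta|\le \mathrm{ac}(t/\rho)\}$.

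First I would make $Z_N$ explicit. Setting $\Phi=2\pi$ gives $q=\tfrac14$, so by (\ref{ZN}) and (\ref{HN})
\[
 Z_N(\beta+i\theta) = \sum_{\epsilon=\pm 1}\Big(\coth\tfrac14(\beta+i\theta+\epsilon i\alpha - 2i\pi)-\coth\tfrac14(\beta+i\theta+\epsilon i\alpha)\Big),
\]
after shifting the arguments by suitable multiples of $i\pi$ (the period of $\coth$). Using $\coth(w-i\pi/2)=\tanh w$ and the identity $\tanh w-\coth w=-2/\sinh(2w)$, this reduces to
\[
 Z_N(\beta+i\theta)=\frac{2}{\sinh\tfrac{\beta+i\theta+i\alpha}{2}}-\frac{2}{\sinh\tfrac{\beta+i\theta-i\alpha}{2}}.
\]
Combining the two fractions and using $\sinh X-\sinh Y = 2\cosh\tfrac{X+Y}{2}\sinh\tfrac{X-Y}{2}$ I obtain
\[
 Z_N(\beta+i\theta) = -\,\frac{4\sinh\tfrac{i\alpha}{2}\,\cosh\tfrac{\beta+i\theta}{2}}{\sinh\tfrac{\beta+i\theta+i\alpha}{2}\,\sinh\tfrac{\beta+i\theta-i\alpha}{2}}.
\]

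Next, since both $e^{i\omega_0\rho\cosh\beta}$ and $f(t-\rho\cosh\beta)$ are even in $\beta$, the integral in (\ref{rud}) is unchanged if I replace $Z_N(\beta+i\theta)$ by its even part $\tfrac12[Z_N(\beta+i\theta)+Z_N(-\beta+i\theta)]$. Performing $\beta\mapsto -\beta$ in the formula above and simplifying with $\sinh(-z)=-\sinh(z)$, I get a twin expression with $\theta$ replaced by $-\theta$. Adding the two and comparing with (\ref{defA}) gives
\[
 \tfrac12\bigl[Z_N(\beta+i\theta)+Z_N(-\beta+i\theta)\bigr]\;=\;4A(\beta).
\]
Plugging this back into (\ref{rud}) with $4\Phi=8\pi$ yields the prefactor $i e^{-i\omega_0 t}/(2\pi)$. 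Finally, since $f$ is the Heaviside function, $f(t-\rho\cosh\beta)=1$ exactly when $|\beta|\le\mathrm{ac}(t/\rho)$ (see (\ref{defac})), which truncates the integral and produces (\ref{ud2T}).

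For (\ref{rudpart2}), I would substitute $\alpha=\pi$ directly in (\ref{defA}). Using $\sinh(w\pm i\pi/2)=\pm i\cosh w$ and $\sinh(i\pi/2)=i$, each summand in the bracket of $A(\beta)$ collapses to $1/\cosh\tfrac{\beta\pm i\theta}{2}$, so
\[
 A(\beta)=-\frac{i}{2}\left[\frac{1}{\cosh\tfrac{\beta+i\theta}{2}}+\frac{1}{\cosh\tfrac{\beta-i\theta}{2}}\right].
\]
Combining via $\cosh X+\cosh Y = 2\cosh\tfrac{X+Y}{2}\cosh\tfrac{X-Y}{2}$ and $\cosh X\cosh Y=\tfrac12[\cosh(X+Y)+\cosh(X-Y)]$, the bracket equals $4\cosh(\beta/2)\cosh(i\theta/2)/[\cosh\beta+\cosh(i\theta)]$. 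Inserting in (\ref{ud2T}) gives (\ref{rudpart2}) (with the integration limit $\mathrm{ac}(t/\rho)$).

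The whole argument is a chain of hyperbolic identities; the only place one must take care is the periodicity reduction $\coth(w-i\pi)=\coth w$ and the identification $\coth(w-i\pi/2)=\tanh w$ in the first step, because a sign mistake there propagates through the symmetrization and hides the factor $4$ connecting $Z_N$ to $A$. I expect this bookkeeping to be the main source of potential error, but no deep estimate is required: Lemma~\ref{propJ} already guarantees absolute convergence, so all rearrangements are legal.
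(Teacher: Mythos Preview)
Your argument is correct and follows the same route the paper gestures at in one line (``follows from (\ref{ZN}), (\ref{HN}) and (\ref{rud})''): you simply make the hyperbolic bookkeeping explicit, and the symmetrization $\beta\mapsto -\beta$ is a clean way to land on the even density $A(\beta)$, which the paper leaves implicit. One caveat: your intermediate displayed formula $Z_N(\beta+i\theta)=\sum_{\epsilon=\pm1}(\coth\tfrac14(\beta+i\theta+\epsilon i\alpha-2i\pi)-\coth\tfrac14(\beta+i\theta+\epsilon i\alpha))$ is not quite right---the two $\epsilon$-terms actually enter with opposite signs after the periodicity reduction---but your next line $Z_N(\beta+i\theta)=2/\sinh\tfrac{\beta+i\theta+i\alpha}{2}-2/\sinh\tfrac{\beta+i\theta-i\alpha}{2}$ is correct, so the slip does not propagate.
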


\begin{proof} Representation (\ref{ud2T}) follows from (\ref{ZN}), (\ref{HN}) and (\ref{rud}) when $f=\mathcal{H}$ and $\Phi=2\pi$. Representation (\ref{rudpart2}) follows from (\ref{ud2T}) when $\alpha=\pi$.
\end{proof}

\begin{obs} In the case of  half plane ($\Phi=2\pi$) and the Heaviside function $f$ representation (\ref{ud2T})
for the diffracted wave coincides with the representation of the diffracted wave $\Phi_d$ given by (43) and modified according to Section 3.3 in  \cite{hew} for $\theta_0\neq 0$. For $\theta_0=0$
\begin{equation}\label{dif}
u_d(\rho,\theta,t) = 2\Phi_d(\rho,\theta,t)
\end{equation}
\end{obs}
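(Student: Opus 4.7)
The plan is to verify the two assertions separately. The geometric setup is $\Phi = 2\pi$, so $\phi = 0$ and the wedge $W$ degenerates into the half-line $\{y_1 \geq 0\}$; Hewett's incidence angle $\theta_0$ in \cite{hew} then corresponds (up to the orientation convention fixed in Section 3.3 there) to $\alpha$ in our notation. For the generic case $\theta_0 \neq 0$, I would reduce the kernel $A(\beta)$ in (\ref{defA}) to the standard Sommerfeld half-plane form. Using the identity $\sinh\tfrac{x+y}{2}\sinh\tfrac{x-y}{2} = \tfrac{1}{2}[\cosh x - \cosh y]$ with $x = \beta \pm i\theta$ and $y = i\alpha$, together with $\cosh(i\alpha) = \cos\alpha$ and $\sinh\tfrac{i\alpha}{2} = i\sin\tfrac{\alpha}{2}$, each product of sines in the denominator of (\ref{defA}) becomes $\tfrac{1}{2}[\cosh(\beta \pm i\theta) - \cos\alpha]$. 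Combining the two resulting factors of $2$ with the $\tfrac{1}{2}$ prefactor yields
$$A(\beta) = -i\sin\tfrac{\alpha}{2}\!\left[\frac{\cosh\tfrac{\beta + i\theta}{2}}{\cosh(\beta+i\theta) - \cos\alpha} + \frac{\cosh\tfrac{\beta - i\theta}{2}}{\cosh(\beta-i\theta) - \cos\alpha}\right],$$
which is precisely (up to notation) the kernel of (43) in \cite{hew}. The causal integration domain $[-\mathrm{ac}(t/\rho), \mathrm{ac}(t/\rho)]$ in (\ref{ud2T}) coincides with Hewett's by Remark \ref{obsac}, so once the parameter correspondence $\theta_0 \leftrightarrow \alpha$ from Section 3.3 of \cite{hew} is imposed, the identity of the two representations follows.

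For the case $\theta_0 = 0$ (equivalently $\alpha = \pi$), I would specialize the simplified kernel above using $\sin\tfrac{\pi}{2} = 1$, $\cos\pi = -1$ and $\cosh z + 1 = 2\cosh^2\tfrac{z}{2}$, followed by $\cosh a + \cosh b = 2\cosh\tfrac{a+b}{2}\cosh\tfrac{a-b}{2}$ and $\cosh a \cosh b = \tfrac{1}{2}[\cosh(a+b) + \cosh(a-b)]$ applied to $a = \tfrac{\beta + i\theta}{2}$, $b = \tfrac{\beta - i\theta}{2}$, to obtain
$$A(\beta)\big|_{\alpha = \pi} = -\,\frac{2i\cosh\tfrac{\beta}{2}\cos\tfrac{\theta}{2}}{\cosh\beta + \cos\theta},$$
which inserted in (\ref{ud2T}) reproduces (\ref{rudpart2}). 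Geometrically, $\alpha = \pi$ corresponds to grazing incidence along the half-line, so the two critical rays $\theta_{1,2}$ of (\ref{T1T2}) coalesce and two simple poles in the residue expansion (\ref{poloHNth}) merge into a single pole sitting on the contour used in \cite{hew}. Hewett's regularization in Section 3.3 retains only one of these two coincident residues in $\Phi_d$, while our derivation of (\ref{rud}) keeps the full doubled residue; this is the origin of the factor $2$ in (\ref{dif}).

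The main obstacle is precisely this residue bookkeeping at $\theta_0 = 0$: (43) of \cite{hew} itself becomes singular at grazing incidence and must be regularized as in his Section 3.3, and verifying that exactly one half of the merging residues survives into $\Phi_d$ requires a careful unpacking of Hewett's conventions. The algebraic reduction of the kernel itself is mechanical and presents no real analytic difficulty, so the entire argument reduces to a parameter-matching exercise plus a single residue count.
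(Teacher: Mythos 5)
Your algebraic reduction of $A(\beta)$ is correct, and your specialization at $\alpha=\pi$ does reproduce (\ref{rudpart2}); your implicit dictionary $\theta_0=0\Leftrightarrow\alpha=\pi$ also agrees with the paper's convention $\theta_0=\alpha-\pi$ from (\ref{t0}) (your opening claim that $\theta_0$ corresponds to $\alpha$ is off by $\pi$). But the central step for $\theta_0\neq 0$ is asserted rather than proved. After your product-to-sum simplification the denominators of $A(\beta)$ are $\cosh(\beta\pm i\theta)-\cos\alpha$, whereas Hewett's (43), rewritten via $s=\rho\cosh\beta$ as in (\ref{obj}) and (\ref{defB}), has denominators of the form $\cosh\beta-\cos(\theta\mp\alpha)$. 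These are genuinely different functions of $\beta$ (compare them at $\beta=0$: $\cos\theta-\cos\alpha$ versus $1-\cos(\theta\mp\alpha)$), so your reduced kernel is \emph{not} ``precisely (up to notation)'' Hewett's kernel, and the identity of the two representations does not follow from the trigonometric identity alone. The paper closes exactly this gap with a nontrivial argument: it verifies that $A$ in (\ref{defA}) and $B$ in (\ref{defB}) have the same poles and residues, are both $2i\pi$-periodic, and both grow at most like $e^{|\mathrm{Re}\,\beta|/2}$, and then concludes $A\equiv B$ by a Liouville-type argument. Your proposal contains no substitute for this meromorphic identification, which is the actual content of the first assertion of the Remark.

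The case $\theta_0=0$ has the same defect. You correctly compute $u_d$ at $\alpha=\pi$, but (\ref{dif}) compares $u_d$ with Hewett's $\Phi_d$, and your justification of the factor $2$ --- ``two coincident residues merge and Hewett's regularization retains only one of them'' --- is a heuristic narrative, not a computation: you never evaluate $\Phi_d$ at $\theta_0=0$. The paper does this directly: Hewett's formula (43) at $\theta_0=0$ is the single-term expression (\ref{2obj}), and the same substitution $s=\rho\cosh\beta$ together with evenness of the integrand shows it equals one half of (\ref{rudpart2}). To complete your argument you would need to carry out this short computation on Hewett's side rather than appeal to a residue count that is never actually performed.
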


\begin{proof}
Let us consider
\begin{eqnarray}\label{t0}
\theta_0 =\alpha-\pi
\end{eqnarray}
the angle which corresponds to the orientation of the impinging wave in our problem. When $\theta_0\neq 0$, the diffracted wave obtained in \cite{hew}   by means of formula (43) and modified according to Section 3.3 (we denote it by $\Phi_d$) is expressed as  follows for $c_0=1$ (we omit the part which corresponds to the incident wave (see page 210 in \cite{hew}))
\begin{eqnarray}\label{obj}
\Phi_d(\rho,\theta,t)
&=& \dfrac{e^{-i\omega_0 t}}{2\pi}\left[ -\mathrm{sgn}(\pi-(\theta-\theta_0))\sqrt{\rho(1+\cos(\theta-\theta_0))}
        \int\limits_{\rho}^t  \frac{e^{i\omega_0s}}{\sqrt{s-\rho}(s+\rho\cos(\theta-\theta_0))}\ \mathrm{d}s\right.\nonumber\\
& &\hspace{1.4cm}
\left.
\displaystyle
-\mathrm{sgn}(\pi-(\theta+\theta_0)) \sqrt{\rho(1+\cos(\theta+\theta_0))}
\int\limits_{\rho}^t  \dfrac{e^{i\omega_0s}}{\sqrt{s-\rho}(s+\rho\cos(\theta+\theta_0))}\ \mathrm{d}s
\right ].\nonumber\\ &&
\end{eqnarray}
Making the change of variable
$
s:=\rho\cosh\beta,
$
and using the evenness of the integrands obtained after the change of variable we rewrite $\Phi_d$ as
\begin{eqnarray*} 
\Phi_d(\rho,\theta,t)
&=&
\dfrac{e^{-i\omega_0 t}}{2\pi}
\int\limits_{-\mathrm{ac}(\frac{t}{\rho})}^{\mathrm{ac}(\frac{t}{\rho})}
e^{i\omega_0\rho\cosh\beta}
\left[    -\mathrm{sgn}(\pi-(\theta-\theta_0))
           \frac{\cosh\frac{i(\theta-\theta_0)}{2} \cosh\frac{\beta}{2}}
                 {\cosh\beta +\cosh (i(\theta-\theta_0))}
\right.
\\ && \hspace{4cm}
\displaystyle
\left.   -\mathrm{sgn}(\pi-(\theta+\theta_0))
          \frac{\cosh\frac{i(\theta+\theta_0)}{2} \cosh\frac{\beta}{2}}{\cosh\beta +\cosh (i(\theta+\theta_0))}
\right] \mathrm{d}\beta.\nonumber
\end{eqnarray*}
Let us consider the case $\theta > \theta_0 + \pi$, $\frac{\pi}{2}<\theta_0 <\pi$. The other cases are analyzed similarly. From (\ref{t0}) we obtain
\begin{eqnarray}\label{fid1}
\Phi_d(\rho,\theta,t)
&=&
\dfrac{ie^{-i\omega_0 t}}{2\pi}
\int\limits_{-\mathrm{ac}(\frac{t}{\rho})}^{\mathrm{ac}(\frac{t}{\rho})}
e^{i\omega_0\rho\cosh\beta}B\left (\beta\right)\ \mathrm{d}\beta,
\end{eqnarray}
where
\begin{eqnarray}\label{defB}
B(\beta)
:=\left[    \dfrac{\sinh \frac{i(\theta - \alpha)}{2}}{\cosh\beta -\cosh (i(\theta - \alpha))}
            - \dfrac{\sinh \frac{i(\theta + \alpha)}{2}}{\cosh\beta -\cosh (i(\theta + \alpha))}
     \right] \cosh\frac{\beta}{2}.
\end{eqnarray}
The poles and residues of $A(\beta)$ and $B(\beta)$ coincide and both functions are periodic with period $2i\pi$. 
Moreover, from  (\ref{defA}) and (\ref{defB}),
$
|A(\beta)|\leq C \ e^{\frac{\mathrm{Re} |\beta|}{2}}
$
and
$
|B(\beta)|\leq C \ e^{\frac{|\mathrm{Re}\ \beta|}{2}}
$, respectively. Hence $A\equiv B$ by the Liouville Theorem. This proves that (\ref{ud2T}) and (\ref{fid1}) are identically equal functions. \newline
\noindent When $\theta_0=0$, formula (43) in \cite{hew} implies that
\begin{eqnarray}\label{2obj}
\Phi_d(\rho,\theta,t)
&=& -\dfrac{e^{-i\omega_0 t}}{2\pi}\ \mathrm{sgn}(\pi-\theta)\sqrt{\rho(1+\cos\theta)}
        \int\limits_{\rho}^t  \frac{e^{i\omega_0 s}}{\sqrt{s-\rho}(s+\rho\cos\theta)}\ \mathrm{d}s.
\end{eqnarray}
Making the change of variable
$
s=\rho\cosh\beta,
$
and using the evenness of the integrand in the obtained integral, we infer (\ref{dif}).
\end{proof}

\begin{obs}
The difference between the cases $\theta_0>0$ and $\theta_0=0$ is explained in the following way. When $\theta_0\rightarrow 0+$,  expression (\ref{obj}) does not converge
to expression (\ref{2obj}), instead it turns into the doubled value of $\Phi_d$ from (\ref{2obj}).
  At the same time expression (\ref{ud2T}) for $u_d$  in our representation turns into (\ref{rudpart2}) when $\alpha\rightarrow \pi$. This relates to the difference in scattering problem formulation for $\theta_0=0$.

  Unlike approach in \cite{hew},
  We take into account the ``reflected" wave which equals to $u_{in}$. Thus our diffracted wave $u_d$ compensates for $2u_{in}$ on the line $\theta=\pi$, and the diffracted wave $\Phi_d$ compensates for only  $u_{in}$. \newline
This fact leads to the difference in the principal terms of the amplitude asymptotic behavior as $t\rightarrow \infty$. In fact,  in the case of $\Phi=2\pi$, $\alpha=\pi$ we have for any $0<\rho\leq \rho_0$, $t\geq \rho_0 2^4$ the following asymptotic  behaviours when $t\rightarrow \infty$
\begin{eqnarray*}
\mathrm{Re}\big[ e^{-i\omega_0 t}R_d(\rho,\theta,t) \big]  \label{asyReR2pi}
&=& -\dfrac{3\sqrt{\rho} }{ \sqrt{2} \ \pi \omega_0^2 }\ \cos\frac{\theta}{2}\   t^{-\frac{5}{2}} + O(t^{-\frac{7}{2}}),\\
\mathrm{Im }\big[e^{-i\omega_0 t}\ R_d(\rho,\theta,t)\big] \label{asyImR2pi}
&=&  - \dfrac{\sqrt{2\rho}}{\pi\omega_0} \cos\frac{\theta}{2}\ t^{-\frac{3}{2}}  +  O(t^{-\frac{5}{2}}).
\end{eqnarray*}
This follows from (\ref{lagata}) and (\ref{bz1z2r}) 
since in this case, $z_2=0$. On the other hand from (61) of \cite{hew} it follows that
$
\mathrm{Re}\ \Phi_d \sim -\mathrm{sgn}(\pi-\theta)
\dfrac{ \sqrt{\eta} (3\xi + \eta)} { 4 \pi \xi^{\frac{3}{2}} (\xi + \eta)^2} \left[1+O\left(\dfrac{1}{\xi^2}\right)\right]$,
 when 
 $\xi\rightarrow\infty$,
where $\xi=\omega_0(t-\rho)$ and $\eta=\omega_0\rho(1+\cos\theta)$. Hence
\begin{eqnarray*}\label{asyReFi}
\mathrm{Re}\ \Phi_d
&=& - \dfrac{ 3\sqrt{\rho} }{ 2\sqrt{2}\ \pi \omega_0^2 } \cos\frac{\theta}{2}\  t^{-\frac{5}{2}} + O(t^{-\frac{7}{2}}).
\end{eqnarray*}
Similarly, from (62) of \cite{hew} 
we obtain
\begin{eqnarray*}\label{asyImFi}
\mathrm{Im}\ \Phi_d &=&
- \dfrac{\sqrt{2\rho} }{ 2 \pi \omega_0 }\ \cos\frac{\theta}{2} \  e^{i\omega_0 t} \  t^{-\frac{3}{2}} + O(t^{-\frac{5}{2}}).
\end{eqnarray*}
\end{obs}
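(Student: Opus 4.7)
The plan is to specialize Theorem~\ref{teoasyR} to the parameters $\Phi=2\pi$, $\alpha=\pi$, and then to derive the analogous asymptotics for $\Phi_d$ by direct expansion of formulas~(61)--(62) in~\cite{hew}. First I would evaluate the coefficients in~(\ref{bz1z2r}) at the specialized values. With $\Phi=2\pi$ one has $q=\pi/(2\Phi)=1/4$, hence $b_1 = 4\sin(\pi^2/\Phi) = 4\sin(\pi/2) = 4$, and
\begin{equation*}
z_1 = 4\cos\!\left[\frac{1}{2}(2\pi-\theta)\right]\cos\!\left[\frac{1}{2}(\pi-\alpha)\right] = -4\cos(\theta/2).
\end{equation*}
The key algebraic observation is that, at $\alpha=\pi$ and $q=1/4$, the factor $1+e^{\pm 4iq\pi}=1+e^{\pm i\pi}$ appearing in the definition of $z_2^\pm$ vanishes, hence $z_2=0$. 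Consequently the $t^{-(4q+1)}$ and $t^{-(4q+2)}$ contributions in~(\ref{lagata}) drop out, and the next non-trivial terms have orders $t^{-(6q+1)}=t^{-5/2}$ and $t^{-(6q+2)}=t^{-7/2}$, which accounts for the remainders claimed in the Remark.

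Next I would substitute these numerical values into the leading-order expressions in~(\ref{asyReR}) and~(\ref{asyImR}); with $2q+1=3/2$, $(\rho/2)^{2q}=\sqrt{\rho/2}$, $4\Phi=8\pi$, a direct arithmetic simplification of $-b_1 z_1(2q+1)(\rho/2)^{2q}/(4\Phi\omega_0^2)$ and $-b_1 z_1 (\rho/2)^{2q}/(4\Phi\omega_0)$ yields the stated asymptotics for $\mathrm{Re}\bigl[e^{-i\omega_0 t}R_d\bigr]$ and $\mathrm{Im}\bigl[e^{-i\omega_0 t}R_d\bigr]$. To produce the analogous asymptotics for $\Phi_d$, I would start from formulas~(61)--(62) of~\cite{hew} written in the variables $\xi=\omega_0(t-\rho)$ and $\eta=\omega_0\rho(1+\cos\theta)$, expand the quotient $\sqrt{\eta}(3\xi+\eta)/[4\pi\xi^{3/2}(\xi+\eta)^2]$ and its imaginary counterpart for $\xi\to\infty$ with $\eta$ bounded, and then revert to the variables $(\rho,\theta,t)$ by using $\sqrt{\eta}=\sqrt{2\omega_0\rho}\,\cos(\theta/2)$ on $0<\theta<\pi$ together with $\xi\sim\omega_0 t$.

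The main obstacle will be verifying the cancellation $z_2=0$ directly from the definition in~(\ref{bz1z2r}) and confirming that no further hidden cancellation affects the subsequent terms; once this is established, the rest of the derivation is routine bookkeeping. A secondary technical point is matching conventions with~\cite{hew}, in particular tracking the factor $\mathrm{sgn}(\pi-\theta)$ in~(\ref{obj}) and accounting for the factor of two coming from the doubled treatment of the reflected field at $\theta_0=0$ discussed in the first part of the Remark; this factor of two is precisely what distinguishes the principal coefficients of our expressions for $\mathrm{Re}[e^{-i\omega_0 t}R_d]$ and $\mathrm{Im}[e^{-i\omega_0 t}R_d]$ from those obtained via Hewett's $\Phi_d$.
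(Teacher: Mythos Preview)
Your proposal is correct and follows precisely the route indicated in the paper: the Remark itself is justified only by the sentence ``This follows from (\ref{lagata}) and (\ref{bz1z2r}) since in this case, $z_2=0$,'' together with a direct expansion of (61)--(62) in \cite{hew}, and you have spelled out exactly these steps (including the key vanishing $1+e^{\pm 4iq\pi}=0$ at $q=1/4$ that kills the $z_2$ term and sharpens the remainder from $O(t^{-(4q+1)})$, $O(t^{-(4q+2)})$ to the stated $O(t^{-5/2})$, $O(t^{-7/2})$). The only caution is that the leading coefficients must be read off from the full expansion (\ref{lagata}) rather than from Theorem~\ref{teoasyR} alone, since the latter's remainder bounds are too coarse once $z_2=0$; you have already anticipated this.
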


\section{Conclusion}

We have completely solved the problem of plane periodic wave scattering by a NN-wedge. Namely, we obtained
 an explicit formula for the cylindrical wave diffracted by the edge of the wedge, we proved the Limiting Amplitude Principle and we found the rate of  convergence to the limiting amplitude. Moreover, this formula is convenient for
 studying solution behavior near the wavefront and for creating theory of nonperiodic wave scattering by wedges. We will explore the result in future publications.

\section{Appendix}

\begin{center}
\textbf{A1}
\end{center}

\begin{lem}\label{propHN}
The function $H_N(\beta)$, defined by (\ref{HN}),  has the following properties
\begin{enumerate}[i)]
			\item $H_N(-\beta+i\pi)=-H_N(\beta)$, for any $\beta\in\mathbb{C}$.
			\item $H_N(\beta+2i\Phi)=H_N(\beta)$, for any $\beta\in\mathbb{C}$.
\end{enumerate}
\end{lem}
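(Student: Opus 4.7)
The plan is to prove both identities directly from the definition (\ref{HN}) of $H_N$, using only two elementary properties of hyperbolic cotangent: it is odd, $\coth(-z)=-\coth(z)$, and it has imaginary period $i\pi$, i.e.\ $\coth(z+i\pi)=\coth(z)$. These two facts reduce each claim to algebraic simplification of the arguments of the two $\coth$'s. No analysis is needed.

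For part (i), I would substitute $-\beta + i\pi$ into (\ref{HN}). The first argument becomes $q(-\beta+i\pi+i\pi/2-i\alpha)=q(-\beta+3i\pi/2-i\alpha)=-q(\beta-3i\pi/2+i\alpha)$, and the second becomes $q(-\beta+i\pi-3i\pi/2+i\alpha)=-q(\beta+i\pi/2-i\alpha)$. Applying $\coth(-z)=-\coth(z)$ to each term, the two contributions swap roles up to an overall sign, producing
\[
H_N(-\beta+i\pi,\Phi)=-\coth\!\bigl[q(\beta-\tfrac{3i\pi}{2}+i\alpha)\bigr]-\coth\!\bigl[q(\beta+\tfrac{i\pi}{2}-i\alpha)\bigr]=-H_N(\beta,\Phi).
\]
That finishes (i).

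For part (ii), I compute $q\cdot 2i\Phi = \frac{\pi}{2\Phi}\cdot 2i\Phi = i\pi$, so adding $2i\Phi$ to $\beta$ shifts the argument of each of the two $\coth$'s in (\ref{HN}) by exactly $i\pi$. By the $i\pi$-periodicity of $\coth$, both summands are unchanged, and hence $H_N(\beta+2i\Phi,\Phi)=H_N(\beta,\Phi)$.

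There is no real obstacle: the only thing one has to be careful about is keeping track of signs and $\pi$-factors in the two arguments of the $\coth$'s so that, in (i), the two terms correctly swap after applying oddness. Because the identity uses $\coth$ rather than $\sinh^{-1}$ as in the DN-case (\ref{H1V}), the argument is actually somewhat cleaner here.
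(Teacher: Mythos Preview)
Your proof is correct and takes essentially the same approach as the paper, which simply states that both identities follow directly from the definition~(\ref{HN}); you have merely spelled out the short computation that the paper leaves implicit.
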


\begin{proof} 
It follows directly from (\ref{HN}).
\end{proof}

\begin{lem}\label{comprous}
The function $\widehat{u}_{s}$ given in (\ref{TFus}) satisfies ``stationary" NN-problem (\ref{SPNN}).
\end{lem}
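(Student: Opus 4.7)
The plan is to decompose $\widehat{u}_s = \widehat{u}_s^{(0)} + \widehat{u}_s^{(1)}$, where $\widehat{u}_s^{(0)}(\rho,\theta,\omega) := -\widehat{g}(\omega)e^{i\rho\omega\cos(\theta-\alpha)}$ is the Fourier--Laplace transform of $-u_{in}$, and $\widehat{u}_s^{(1)}$ denotes the Sommerfeld integral in (\ref{TFus}). The piece $\widehat{u}_s^{(0)}$ is a plane wave, so it obviously solves $(\Delta+\omega^2)\widehat{u}_s^{(0)} = 0$ on $\mathbb{R}^2$. A direct calculation of its normal derivative---on $Q_1$ with $\partial_n = \partial_{y_2} = \rho^{-1}\partial_\theta|_{\theta=2\pi}$, on $Q_2$ with $\partial_n = -\rho^{-1}\partial_\theta|_{\theta=\phi}$---together with the identities $\sin(\Phi+\alpha) = -\sin(\phi-\alpha)$ and $\cos(\Phi+\alpha)/\sin\Phi = -\cos(\phi-\alpha)/\sin\phi$ (consequences of $\Phi = 2\pi-\phi$) shows that $\partial_n\widehat{u}_s^{(0)}$ reproduces exactly the inhomogeneous right-hand sides of (\ref{SPNN}). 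The task therefore reduces to proving $(\Delta+\omega^2)\widehat{u}_s^{(1)} = 0$ in $Q$ and $\partial_n\widehat{u}_s^{(1)} = 0$ on $Q_1 \cup Q_2$.

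For the Helmholtz equation I would differentiate under the integral sign---legitimate because $|e^{-\rho\omega\sinh\beta}| = e^{-\rho\omega_2\cosh\beta_1}$ on $\mathcal{C}$ (from (\ref{NBHnew})) combines with the uniform bound (\ref{Lon1}) on $H_N$ to give locally uniform convergence after any number of $\rho,\theta$-differentiations---and then establish the pointwise identity
\[
\rho^2(\Delta+\omega^2)\bigl[e^{-\rho\omega\sinh\beta}H_N(\beta+i\theta)\bigr]
= \partial_\beta\Bigl[\bigl(-\rho\omega\cosh\beta\,H_N(\beta+i\theta) - H_N'(\beta+i\theta)\bigr)e^{-\rho\omega\sinh\beta}\Bigr]
\]
by rewriting $\partial_\theta^2 H_N(\beta+i\theta) = -H_N''(\beta+i\theta)$ and collecting terms. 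Since the integrand is then a total $\beta$-derivative, integration along $\mathcal{C}$ yields only boundary values at infinity, which vanish by the exponential decay of $e^{-\rho\omega\sinh\beta}$.

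To verify $\partial_n\widehat{u}_s^{(1)}|_{Q_j}=0$ it suffices to show that
\[
\int_{\mathcal{C}} e^{-\rho\omega\sinh\beta}\,H_N'(\beta+i\theta_\ast)\,d\beta = 0 \qquad \text{for } \theta_\ast\in\{\phi,\,2\pi\}.
\]
The key observation is that the involution $\beta\mapsto -\beta-3i\pi$ carries $\mathcal{C}_1$ onto $\mathcal{C}_2$ by construction. Combining Lemma \ref{propHN}(i) (applied with $\gamma:=\beta+4i\pi-i\theta$) with the $2i\Phi$-periodicity of Lemma \ref{propHN}(ii) (using $2i\Phi = 4i\pi - 2i\phi$) produces
\[
H_N(-\beta-3i\pi+i\theta) = -H_N\bigl(\beta + i(2\phi-\theta)\bigr),
\]
and differentiation in $\beta$ gives $H_N'(-\beta-3i\pi+i\theta) = H_N'(\beta+i(2\phi-\theta))$. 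Specializing to $\theta_\ast=\phi$ returns $H_N'(\beta+i\phi)$ directly, and specializing to $\theta_\ast=2\pi$ and applying periodicity once more returns $H_N'(\beta+2i\pi)$. Combined with $\sinh(-\beta-3i\pi)=\sinh\beta$, this shows that $F(\beta) := e^{-\rho\omega\sinh\beta}H_N'(\beta+i\theta_\ast)$ satisfies $F(-\beta-3i\pi)=F(\beta)$ for both critical values of $\theta_\ast$.

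The last step is the orientation bookkeeping: checking from Figure~4 that the involution $\beta \mapsto -\beta-3i\pi$ carries the counter-clockwise orientation of $\mathcal{C}_1$ precisely to the counter-clockwise orientation of $\mathcal{C}_2$. Once this is verified, the change of variable (whose Jacobian is $-1$) yields $\int_{\mathcal{C}_2}F(\beta)\,d\beta = -\int_{\mathcal{C}_1}F(-\beta'-3i\pi)\,d\beta' = -\int_{\mathcal{C}_1}F(\beta')\,d\beta'$, so $\int_{\mathcal{C}}F=0$, as required. The main obstacle I anticipate is precisely this orientation bookkeeping; everything else consists of routine hyperbolic identities and dominated-convergence arguments.
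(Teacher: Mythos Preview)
Your proposal is correct and follows essentially the same route as the paper: the same decomposition into the plane-wave piece and the Sommerfeld integral, and the same involution $\beta\mapsto-\beta-3i\pi$ (the paper calls it $\mathcal{S}$) combined with the symmetry of $\mathcal{C}$ about $-i\tfrac{3\pi}{2}$ to kill the boundary integrals. Two cosmetic differences: for the Helmholtz equation the paper simply observes that $e^{-\rho\omega\sinh(\beta-i\theta)}$ is itself a Helmholtz solution (after the shift $\beta\mapsto\beta-i\theta$), whereas you exhibit an explicit total-$\beta$-derivative primitive---both are valid; and for the boundary at $\theta=\phi$ the paper first integrates by parts and then applies the symmetry to $\cosh\beta\,H_N(\beta+i\phi)$, whereas you handle both boundaries uniformly by combining Lemma~\ref{propHN}(i) with the $2i\Phi$-periodicity to get the symmetry of $H_N'(\beta+i\theta_\ast)$ directly. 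Your orientation bookkeeping is right: tracing Figure~4, $\mathcal{S}$ carries the top ray of $\mathcal{C}_1$ (traversed leftward) to the bottom ray of $\mathcal{C}_2$ (traversed rightward), so the stated orientations match and the Jacobian $-1$ gives the cancellation.
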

\begin{proof} It is easy to check that $f_1(\rho,\theta,\omega) := - \widehat{g}(\omega) e^{i\rho\omega\cos(\theta-\alpha)}$ satisfies system (\ref{SPNN}). Therefore by (\ref{TFus}), to prove (\ref{SPNN}) for $\widehat{u}_{s}$ it suffices to prove that
\begin{eqnarray}\label{f2}
  f_2(\rho,\theta,\omega) &:=& \int\limits_{\mathcal{C}} e^{-\rho \omega \sinh\beta} H_N(\beta+i\theta)\  \mathrm{d}\beta
\end{eqnarray}
satisfies for any $\rho>0$
\begin{equation}\label{SPNN2}
\left\{
	   \begin{array}{rcl}
       (\Delta + \omega^2)\ f_2(\rho,\theta,\omega) & = & 0, \ \qquad \theta\in[\phi,2\pi]   \\ 
       \dfrac{\partial\ }{\partial y_2} f_2(\rho,\theta,\omega) & = &  0, \ \qquad \theta = 2\pi \\ 
       \dfrac{\partial\ }{\partial \mathbf{n_2}}f_2(\rho,\theta,\omega) & = & 0,\ \qquad \theta = \phi
		\end{array}
\right.
\end{equation}
The Helmholtz equation in  (\ref{SPNN2}) follows by differentiation of the integral (\ref{f2})
 after the change of variable  $\beta\mapsto\beta'-i\theta$, since $(\Delta + \omega^2) e^{-\rho\omega\sinh(\beta-i\theta)}=0$. Moreover, the integral in  (\ref{f2}) converges absolutely after the differentiation for any $\omega\in\mathbb{C}^+$  by (\ref{C}), the condition $\omega\in\mathbb{C}^+$ and (\ref{HN}).

Let us prove that $f_2$ satisfies the second equality of (\ref{SPNN2}).
Since
$\dfrac{\partial}{\partial y_2 }\Big|_{\theta=2\pi}=\dfrac{1}{\rho}\cdot \dfrac{\partial}{\partial \theta }\Big|_{\theta=2\pi}$
it suffices to prove that
\begin{equation}\label{sin}
\dfrac{\partial}{\partial \theta }\ f_2(\rho,\theta,\omega) \Big|_{\theta=2\pi} = 0.
\end{equation}
Since
$\dfrac{\partial}{\partial \theta } = i\ \dfrac{\mathrm{d}\ }{\mathrm{d}\beta}$ and
the integral
$\displaystyle\int\limits_{\mathcal{C}} e^{-\rho\omega\sinh\beta}\dfrac{\partial}{\partial \theta} H_N(\beta+i\theta) \ \mathrm{d} \beta$ converges uniformly with respect to $\theta$ (by  (\ref{C}), $\omega\in\mathbb{C}^+$ and (\ref{HN}))
to prove (\ref{sin}) it suffices to prove that
\begin{equation}\label{Hcomp2eq}
\int\limits_{\mathcal{C}}e^{-\rho\omega\sinh\beta}\dfrac{\mathrm{d}\ }{\mathrm{d}\beta} H_N(\beta+2i\pi)\ \mathrm{d}\beta=0.
\end{equation}
The function $e^{-\rho\omega\sinh\beta}\dfrac{\mathrm{d}\ }{\mathrm{d}\beta} H_N(\beta+2i\pi)$ is invariant with respect to $\mathcal{S}(\beta)=-\beta-3i\pi$,  for any $\beta\in\mathcal{C}$. It follows from
Lemma \ref{propHN}, i) and the fact that $\dfrac{\mathrm{d}\ }{\mathrm{d}\beta}\ H_N(\beta+2i\pi)$ is invariant with respect to $\mathcal{S}(\beta)$ for any $\beta\in\mathcal{C}$. Hence (\ref{Hcomp2eq}) holds by the symmetry of $\mathcal{C}$ given in (\ref{C}) with respect to $-i\frac{3\pi}{2}$, see Figure 4.

Let us prove that $f_2$ satisfies the third equality of (\ref{SPNN2}).
Since (\ref{f2}), $\dfrac{\partial}{\partial \mathbf{n_2} }\Big|_{\theta=\phi}  = -\dfrac{1}{\rho}\cdot \dfrac{\partial}{\partial \theta }\Big|_{\theta=\phi}$,
$\dfrac{\partial}{\partial \theta } = i\ \dfrac{\mathrm{d}\ }{\mathrm{d}\beta}$,
and the fact that the integral
$\displaystyle\int\limits_{\mathcal{C}} e^{-\rho\omega\sinh\beta}\dfrac{\partial}{\partial \theta} H_N(\beta+i\theta) \ \mathrm{d} \beta$ converges uniformly with respect to $\theta$ (by (\ref{C}), $\omega\in\mathbb{C}^+$ and (\ref{HN})), 
it suffices to prove that
$\displaystyle\int\limits_{\mathcal{C}} e^{-\rho\omega\sinh\beta} \dfrac{\mathrm{d}\ }{\mathrm{d}\beta} H_N(\beta+i\phi) \ \mathrm{d} \beta = 0$.
Integrating by parts 
and  using that  for any $\beta\in\mathcal{C}$, $H_N(\beta+i\phi)$ is a bounded function and $e^{-\rho \omega \sinh\beta}\longrightarrow 0$  when $\left|\mathrm{Re}\ \beta\right|\rightarrow+\infty$ we obtain
$\displaystyle\int\limits_{\mathcal{C}} e^{-\rho\omega\sinh\beta} \dfrac{\mathrm{d}\ }{\mathrm{d}\beta} H_N(\beta+i\phi) \ \mathrm{d} \beta
= \rho\omega \int\limits_{\mathcal{C}}  e^{-\rho \omega \sinh\beta} \cosh\beta\ H_N(\beta+i\phi)\  \mathrm{d}\beta$.
The last integral is equal to 0, because of the invariance of the integrand with respect to to $-\beta-3i\pi$,  for any $\beta\in\mathcal{C}$ and by the symmetry of $\mathcal{C}$ with respect to  $-i\frac{3\pi}{2}$.
\end{proof}


\begin{center}
\textbf{A2}
\end{center}
\begin{proof}[\textbf{Proof of Lemma \ref{propg}}] \textbf{i)} The analytic continuation of $\widehat{g}(\omega_1)$ to $\mathbb{C}^+$ and  (\ref{extg}) follow from the Paley-Wiener type Theorem for convex cones (Theorem I.5.2 in \cite{k}) since $\mathrm{supp} f \subset [0,\infty)$ by (\ref{f}).
The estimate (\ref{cotag}) follows from (\ref{f}) since $\omega_0\in\mathbb{R}$.

\noindent \textbf{ii)} $f'\in C_0^{\infty}(\mathbb{R})$ since  $\mathrm{supp}(f')$ is a compact set by (\ref{f}) and
$f\in C^{\infty}(\mathbb{R})$. Hence existence of the analytic continuation of $\widehat{g}_1(\omega_1)$ to $\mathbb{C}$ and (\ref{g1AnaR}) follow from the Classic Paley-Wiener Theorem  \cite{Rud}. It is easy to check that $\widehat{g}_1(\omega)=(\omega-\omega_0)\widehat{g}(\omega)$, for any $\omega\in\mathbb{C}^+$ by (\ref{gR}), (\ref{g1}) and the Analytic Continuation Principle. This implies the first identity in (\ref{gg1}). Hence, the second identity in (\ref{gg1}) follows from (\ref{extg}) and (\ref{g1AnaR}). The statement (\ref{gC}) follows from  the first identity in (\ref{gg1}) and (\ref{g1Ana}).
%
%
\end{proof}


\begin{center}
\textbf{A3}
\end{center}
\begin{defn}
For a function  $h(s)$, we denote the jump of $h(s)$,  at a point $s=s^*\in\mathbb{R}$ as 
 $   \mathfrak{J}(h,s^*)
    := \lim\limits_{\varepsilon\rightarrow 0+}   h(s^* + i \varepsilon) -
       \lim\limits_{\varepsilon\rightarrow 0+}   h(s^* - i \varepsilon)$,
if the limits exist.
\end{defn}

\begin{lem}\label{lemdiflim} Let $f\in \mathrm{C}_0(\mathbb{R})$ and for $ |\varepsilon|\leq 1$ let
$F(\varepsilon) := \dfrac{1}{2i\pi}\displaystyle\int\limits_{-\infty}^{\infty} f(s) \coth(qs+i\varepsilon)\ \mathrm{d}s$.
Then there exist the limits $\lim\limits_{\varepsilon\rightarrow 0+} F(\varepsilon)$, $\lim\limits_{\varepsilon\rightarrow 0-} F(\varepsilon)$ and
$\mathfrak{J}(F,0) = -\dfrac{1}{q}\ f(0)$.
\end{lem}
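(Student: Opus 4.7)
The plan is to compute the jump $\mathfrak{J}(F,0)$ directly from the classical identity
$$
\coth u-\coth v=-\frac{\sinh(u-v)}{\sinh u\,\sinh v},
$$
thereby bypassing a delicate Sokhotski--Plemelj step for a merely continuous integrand. With $u=qs+i\varepsilon$, $v=qs-i\varepsilon$ and the product formula $\sinh(qs+i\varepsilon)\sinh(qs-i\varepsilon)=\sinh^2(qs)+\sin^2\varepsilon$, this gives for $\varepsilon>0$
$$
\coth(qs+i\varepsilon)-\coth(qs-i\varepsilon)=\frac{-i\sin(2\varepsilon)}{\sinh^2(qs)+\sin^2\varepsilon},
$$
whence
$$
F(\varepsilon)-F(-\varepsilon)=-\frac{\sin(2\varepsilon)}{2\pi}\int_{\mathbb{R}}\frac{f(s)\,ds}{\sinh^2(qs)+\sin^2\varepsilon}.
$$

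The key step is to verify that the kernel
$$
K_\varepsilon(s):=\frac{\sin(2\varepsilon)}{\sinh^2(qs)+\sin^2\varepsilon}
$$
is a nonnegative approximate identity concentrating at $s=0$ of total mass $2\pi/q$. Indeed, for small $s,\varepsilon$ one has $K_\varepsilon(s)\sim 2\varepsilon/(q^2s^2+\varepsilon^2)$, a rescaled Poisson kernel of mass $2\pi/q$; away from the origin, the exponential growth of $\sinh^2(qs)$ forces $K_\varepsilon\to 0$ uniformly on $\{|s|\geq\delta\}$ and makes the tail contribution uniformly small. Since $f\in C_0(\mathbb{R})$, the standard approximate-identity argument then yields
$$
\int_{\mathbb{R}}f(s)\,K_\varepsilon(s)\,ds\longrightarrow\frac{2\pi}{q}f(0),\qquad\varepsilon\to 0^+,
$$
and so $F(\varepsilon)-F(-\varepsilon)\to -f(0)/q$, which is the asserted jump.

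To obtain the existence of the one-sided limits themselves, I would analyze the even part
$$
F(\varepsilon)+F(-\varepsilon)=\frac{1}{2i\pi}\int_{\mathbb{R}}[f(s)-f(0)]\,\frac{\sinh(2qs)}{\sinh^2(qs)+\sin^2\varepsilon}\,ds,
$$
obtained from $\coth u+\coth v=\sinh(u+v)/(\sinh u\,\sinh v)$ together with the oddness in $s$ of the resulting kernel (which lets us discard the $f(0)$ contribution). The pointwise limit of the integrand as $\varepsilon\to 0$ is $2[f(s)-f(0)]\coth(qs)$, and dominated convergence produces a finite limit common to $\varepsilon\to 0^{\pm}$. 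Combining with the antisymmetric part then yields $\lim_{\varepsilon\to 0^{\pm}}F(\varepsilon)$ individually and confirms the jump.

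The main obstacle is this last domination step: $[f(s)-f(0)]\coth(qs)$ is locally integrable near $s=0$ only under a modulus-of-continuity condition on $f$ at the origin (satisfied in the intended applications, where $f$ inherits smoothness from (\ref{f})). Without such regularity only the jump itself follows cleanly; the existence of the one-sided limits in full generality would require a more careful uniform-tail estimate, but this is not needed for the jump-of-derivatives application the lemma serves in the paper.
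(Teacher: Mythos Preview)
Your argument is correct and is considerably more explicit than the paper's own proof, which consists of a single line: ``It follows by the Sokhotsky--Plemelj Theorem.'' In effect the paper localizes $\coth(qs+i\varepsilon)$ near its pole as $\frac{1}{qs+i\varepsilon}+(\text{regular})$ and reads off both the one-sided limits and the jump $-f(0)/q$ from the standard formula $\frac{1}{x\pm i0}=\mathrm{p.v.}\frac{1}{x}\mp i\pi\delta$.

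Your route is genuinely different: you compute the antisymmetric part $F(\varepsilon)-F(-\varepsilon)$ via the closed-form kernel $K_\varepsilon(s)=\sin(2\varepsilon)/(\sinh^2(qs)+\sin^2\varepsilon)$ and recognize it as a positive approximate identity of mass $2\pi/q$; this establishes the value of the jump under only continuity of $f$, without invoking any named theorem. You then treat existence of the individual limits via the symmetric part and dominated convergence. The honest caveat you raise --- that $[f(s)-f(0)]\coth(qs)$ need not be locally integrable for merely continuous $f$ --- is exactly the same regularity issue hidden in the paper's appeal to Sokhotski--Plemelj (the principal-value term there requires a Dini/H\"older-type condition as well). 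Since in the only use of this lemma (Lemma~\ref{lemjudr}) the function $f$ is built from derivatives of $A(\beta,\rho,t)$ and is $C^\infty$ with compact support, neither approach is in trouble in context. What your approach buys is that the jump value is obtained cleanly for all $f\in C_0(\mathbb{R})$, and the need for extra smoothness is isolated precisely where it enters; what the paper's one-liner buys is brevity.
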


\begin{proof}
It follows by the Sokhotsky-Plemelj Theorem.
\end{proof}

\begin{lem}\label{lemjudr} Let $u_r$, $u_d$ be the functions given by (\ref{ur}) and (\ref{ud2}), respectively. Then
\begin{eqnarray}\label{jumpurud}
      \mathfrak{J} \left(  \dfrac{\partial^{(k)}u_r}{\partial \theta^k}, \theta_l\right)
  = - \mathfrak{J} \left(  \dfrac{\partial^{(k)}u_d}{\partial \theta^k}, \theta_l\right),
      \qquad k\in\mathbb{N}_0, \ l=1,2.
\end{eqnarray}
\end{lem}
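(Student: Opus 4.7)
The plan is to compute each side of (\ref{jumpurud}) separately and verify cancellation.

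Starting with $u_r$: from the piecewise definition (\ref{ur}), $u_r \equiv 0$ on the ``shadow'' sector $(\theta_1,\theta_2)$, so by direct inspection
\begin{equation*}
\mathfrak{J}(\partial_\theta^k u_r,\theta_1) = -\,\partial_\theta^k u_{r,1}(\rho,\theta_1,t), \qquad
\mathfrak{J}(\partial_\theta^k u_r,\theta_2) = +\,\partial_\theta^k u_{r,2}(\rho,\theta_2,t).
\end{equation*}
Each right-hand side is an explicit smooth function of $(\rho,t)$; in particular $u_{r,l}(\rho,\theta_l,t) = e^{i\omega_0(\rho - t)} f(t-\rho)$ since $\cos(\theta_l - \theta_l) = 1$.

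For $u_d$ given by (\ref{ud2}), the $\theta$-dependence sits entirely in $Z_N(\beta+i\theta)$. By (\ref{poloHNth}), this function has a pole on the real $\beta$-axis only at $\beta = 0$, and only for $\theta = \theta_1$ (from the $H_N(\beta - 5i\pi/2)$ summand of (\ref{ZN})) or $\theta = \theta_2$ (from the $H_N(\beta - i\pi/2)$ summand). To handle $k \geq 1$, I would use $\partial_\theta = i\partial_\beta$ on $Z_N(\beta+i\theta)$ and integrate by parts $k$ times, with boundary terms vanishing by (\ref{BZ}). All $\theta$-derivatives then fall on the analytic factor $e^{i\omega_0 \rho \cosh \beta} f(t-\rho\cosh\beta)$, leaving a single isolated $\coth$-factor from the critical summand of $H_N$ as the source of the jump in $\theta$.

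Applying Lemma \ref{lemdiflim} to that $\coth$-factor yields the Sokhotsky-Plemelj jump at $\theta_l$. Its value at $\beta = 0$, multiplied by the prefactor $ie^{-i\omega_0 t}/(4\Phi)$ and combined with the constant $-1/q = -2\Phi/\pi$ from the lemma, is expected to equal $\mp\,\partial_\theta^k u_{r,l}(\rho,\theta_l,t)$. The main obstacle will be matching the polynomial-in-$\rho$ factors produced by the iterated integration by parts (derivatives of $\cosh \beta$ evaluated at $\beta = 0$) with the polynomial factors produced by iterated $\theta$-differentiation of $u_{r,l}$ at $\theta_l$; this reduces to the chain-rule identity relating $\partial_\beta^j \cosh \beta$ at $\beta = 0$ to $\partial_\theta^j \cos(\theta - \theta_l)$ at $\theta = \theta_l$, together with careful sign tracking through $\partial_\theta = i\partial_\beta$ and the counter-clockwise orientation chosen for $\mathcal{C}_0$.
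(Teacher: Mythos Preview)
Your proposal is correct and follows essentially the same route as the paper: isolate the singular $\coth$ summand of $Z_N$, use $\partial_\theta = i\partial_\beta$ and integrate by parts to throw all derivatives onto the smooth factor, then invoke Lemma~\ref{lemdiflim}. The paper streamlines your ``main obstacle'' by introducing $A(\beta,\rho,t):=e^{-i\omega_0(t-\rho\cosh\beta)}f(t-\rho\cosh\beta)$ and observing $u_{r,l}(\rho,\theta,t)=A(-i(\theta-\theta_l),\rho,t)$, which turns your chain-rule matching into the single identity $\partial_\theta^k u_{r,l}=(-i)^k\partial_\beta^k A$; also note that the boundary terms in the paper are killed by the compact $\beta$-support coming from $f$ (since the isolated $\coth$ alone does not decay), and that the integral in (\ref{ud2}) is already over $\mathbb{R}$, so the orientation of $\mathcal{C}_0$ plays no role at this stage.
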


\begin{proof} We consider the case $\theta=\theta_1$. The case $\theta=\theta_2$ is analyzed similarly.


\noindent First we find $  \mathfrak{J}\left(\dfrac{\partial^{(k)}u_r}{\partial \theta^k}, \theta_1\right)$.
From (\ref{ur}) it follows that
\begin{eqnarray}\label{jumpdur}
  \mathfrak{J}\left(\dfrac{\partial^{(k)}u_r}{\partial \theta^k}, \theta_1\right)
  &=& -\dfrac{\partial^{(k)}\ }{\partial \theta^k} u_{r,1}(\rho,\theta_1,t), \qquad k\in\mathbb{N}_0.
\end{eqnarray}
Using  polar coordinates $y=(\rho\cos\theta,\rho\sin\theta)$  in $u_{r,1}$ and making the change of variable $\beta=-i(\theta-\theta_1)$ we obtain
$u_{r,1}(\rho,\theta_1,t)
= e^{-i\omega_0(t-\rho\cos(\theta-\theta_1))}  f(t-\rho\cos(\theta-\theta_1)) 
=   A(\beta,\rho,t)$,
where
\begin{eqnarray}\label{Anew}
  A(\beta,\rho,t) &:=& e^{-i\omega_0(t-\rho\cosh\beta)}  f(t-\rho\cosh\beta).
\end{eqnarray}
Then
$\dfrac{\partial^{(k)}\ }{\partial \theta^k} u_{r,1}(\rho,\theta,t),
= (-i)^k \dfrac{\partial^{(k)}\ }{\partial \beta^k} A(\beta,\rho,t)$, 
$k\in\mathbb{N}_0$.
Hence by (\ref{jumpdur}) we obtain
\begin{eqnarray}\label{jumpderur}
  \mathfrak{J}\left(\dfrac{\partial^{(k)}u_r}{\partial \theta^k}, \theta_1\right)
  &=& -(-i)^k \dfrac{\partial^{(k)}\ }{\partial \beta^k} A(0,\rho,t),\qquad  k\in\mathbb{N}_0.
\end{eqnarray}


\noindent Now we find $\mathfrak{J}\left(\dfrac{\partial^{(k)} u_d}{\partial\theta^k} ,\theta_1\right)$. From (\ref{ZN}) and (\ref{HN}) it follows that
$Z_N(\rho,\theta,t)
    = \coth \Big[ q(\beta + i\theta - i\theta_1 ) \Big]
+       \coth \Big[ q(\beta + i\theta-i\theta_2) \Big]
-       \coth \Big[ q(\beta + i\theta-2i\pi-i\alpha) \Big]
-       \coth \Big[ q(\beta + i\theta-i\alpha) \Big]$.
Since function $\coth \Big[ q(\beta + i\theta - i\theta_1 ) \Big]$ is discontinuous in $\theta=\theta_1$, then $Z_N(\rho,\theta,t)$ is also discontinuous in $\theta=\theta_1$. Hence from (\ref{rud}) we have that
\begin{eqnarray}\label{judju1}
\mathfrak{J}\left(\frac{\partial^{(k)} u_d}{\partial\theta^k} ,\theta_1\right) &=&
\mathfrak{J}\left(\frac{\partial^{(k)} u_1}{\partial\theta^k} ,\theta_1\right),
\end{eqnarray}
where
$u_1(\rho,\theta,t)
    = -\dfrac{q}{2i\pi} \displaystyle\int\limits_{-\infty}^{+\infty}  A(\beta,\rho,t)
        \coth\Big[ q(\beta + i\theta - i\theta_1 ) \Big]  \ \mathrm{d}\beta$
and $A(\beta,\rho)$ is given by (\ref{Anew}).
Using
$  \dfrac{\partial\ }{\partial\theta} \coth\Big[ q(\beta + i\theta - i\theta_1 ) \Big]
= i\dfrac{\partial\ }{\partial\beta } \coth\Big[ q(\beta + i\theta - i\theta_1 ) \Big]$,
integrating by parts $k$ times (for $k=0$ we do not integrate $\frac{\partial^{(k)} u_1}{\partial\theta^k}$), and using that by (\ref{f}) the integration is realized on a compact interval, we obtain
$\dfrac{\partial^{(k)} }{\partial\theta^k} u_1(\rho,\theta,t)
    =  (-1)^{k-1}\dfrac{i^k q}{2i\pi} \displaystyle\int\limits_{-\infty}^{+\infty}
         \dfrac{\partial^{(k)} }{\partial\beta^k} A(\beta,\rho,t) \cdot
         \coth [ q(\beta + i\theta - i\theta_1) ] \ \mathrm{d}\beta$.
Applying  Lemma \ref{lemdiflim} with $f(\beta)=(-1)^{k-1} i^k q \dfrac{\partial^{(k)} }{\partial\beta^k} A(\beta,\rho,t)$ we obtain
\begin{eqnarray}\label{ju1}
\mathfrak{J}\left(\frac{\partial^{(k)} u_1}{\partial\theta^k} , \theta_1\right)
&=& (-i)^k\ \frac{\partial^{(k)} }{\partial\theta^k} A(0,\rho,t).
\end{eqnarray}
Therefore (\ref{jumpurud}) follows from (\ref{judju1}), (\ref{ju1}) and (\ref{jumpderur}).
\end{proof}


\begin{center}
\textbf{A4}
\end{center}

\begin{lem} The function
$\left[\dfrac{t}{t+\sqrt{t^2 - \rho^2}}\right]^m$
admits the following asymptotic behavior
\begin{eqnarray}\label{asyB}
\left[\dfrac{t}{t+\sqrt{t^2 - \rho^2}}\right]^m
&=& \left(\dfrac{1}{2}\right)^m
      + m\left(\dfrac{1}{2}\right)^{m-1} \dfrac{1}{8}\cdot \dfrac{\rho^2}{t^2}
      + O \left( \dfrac{1}{t^4} \right),  \qquad  t\rightarrow\infty.
\end{eqnarray}
\end{lem}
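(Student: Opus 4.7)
The plan is to reduce the statement to a Taylor expansion at the origin of an even analytic function, then read off the first two nonzero coefficients. First I would substitute $x := \rho/t$, which is small and positive for $t$ large and $\rho \leq \rho_0$, and rewrite
$$\frac{t}{t+\sqrt{t^2-\rho^2}} = \frac{1}{1+\sqrt{1-x^2}}.$$
So it suffices to expand $\Psi_m(x) := (1+\sqrt{1-x^2})^{-m}$ up to order $x^2$ with an $O(x^4)$ remainder, and then substitute $x = \rho/t$.

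Next I would use the binomial series $\sqrt{1-x^2} = 1 - \tfrac{x^2}{2} + O(x^4)$, which gives
$$1+\sqrt{1-x^2} = 2\left(1 - \tfrac{x^2}{4} + O(x^4)\right).$$
Inverting this using $(1-u)^{-1} = 1 + u + O(u^2)$ with $u = x^2/4 + O(x^4)$ yields
$$\frac{1}{1+\sqrt{1-x^2}} = \frac{1}{2}\left(1 + \tfrac{x^2}{4} + O(x^4)\right).$$
Raising to the $m$-th power and using $(1+v)^m = 1 + mv + O(v^2)$ with $v = x^2/4 + O(x^4)$ gives
$$\Psi_m(x) = \frac{1}{2^m}\left(1 + \frac{m}{4}x^2 + O(x^4)\right) = \left(\frac{1}{2}\right)^m + m\left(\frac{1}{2}\right)^{m-1}\cdot\frac{1}{8}\,x^2 + O(x^4).$$
Substituting $x = \rho/t$ produces exactly (\ref{asyB}), and the uniformity for $\rho \in [0,\rho_0]$ follows since the implicit constant in $O(x^4)$ depends only on $m$.

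I do not anticipate any real obstacle: the function $\Psi_m$ is analytic in a neighborhood of $x=0$, so the Taylor expansion is automatic, and evenness of $\Psi_m$ explains why there is no $x$-term and why the error is $O(x^4)$ rather than $O(x^3)$. The only point requiring mild care is bookkeeping of the coefficient $m\cdot 2^{-(m-1)}\cdot\tfrac{1}{8}$, which one checks by multiplying $2^{-m}\cdot\tfrac{m}{4}$ and rewriting it in the form stated in (\ref{asyB}).
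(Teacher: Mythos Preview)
Your proof is correct: the substitution $x=\rho/t$ and the straightforward Taylor expansion of $(1+\sqrt{1-x^2})^{-m}$ give exactly the stated coefficients, and evenness indeed forces the remainder to be $O(x^4)$. The paper itself states this lemma without proof in Appendix~A4, so there is nothing to compare against; your argument is the natural one and would serve perfectly well as the omitted proof.
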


\end{document}